\title{Interpolation properties for the bimodal provability logic $\mathbf{GR}$}
\author{Haruka Kogure\footnote{Email: kogure1987@stu.kanazawa-u.ac.jp}
\footnote{College of Science and Engineering, School of Mathematics and Physics, Kanazawa University, Kakuma, Kanazawa 920-1192, Japan}
and Taishi Kurahashi\footnote{Email: kurahashi@people.kobe-u.ac.jp}
\footnote{Graduate School of System Informatics, Kobe University, 1-1 Rokkodai, Nada, Kobe 657-8501, Japan.}}
\date{}
\theoremstyle{plain}
\newtheorem{thm}{Theorem}[section]
\newtheorem*{thm*}{Theorem}
\newtheorem{lem}[thm]{Lemma}
\newtheorem{prop}[thm]{Proposition}
\newtheorem{cor}[thm]{Corollary}
\newtheorem{fact}[thm]{Fact}
\newtheorem*{fact*}{Fact}
\newtheorem{prob}[thm]{Problem}
\newtheorem*{prob*}{Problem}
\newtheorem*{cl*}{Claim}
\newtheorem*{scl*}{Subclaim}
\theoremstyle{definition}
\newtheorem{defn}[thm]{Definition}
\newcommand{\PA}{\mathsf{PA}}
\newcommand{\PR}{\mathrm{Pr}}
\newcommand{\PRR}{\mathrm{Pr}^{\mathrm{R}}}
\newcommand{\seq}[1]
{\langle#1\rangle}
\newcommand{\N}{\mathbf{N}}
\newcommand{\NR}{\mathbf{NR}}
\newcommand{\GL}{\mathbf{GL}}
\newcommand{\GR}{\mathbf{GR}}
\newcommand{\GRM}{\mathbf{GR}^-}
\newcommand{\GRC}{\mathbf{GR}^\circ}
\newcommand{\GRB}{\mathbf{GR}^\bullet}
\newcommand{\bb}{\blacksquare}
\newcommand{\bg}{\textcolor[gray]{0.7}{\blacksquare}}
\newcommand{\LT}{\mathcal{L}_2}
\newcommand{\LBox}{\mathcal{L}_\Box}
\newcommand{\Lbb}{\mathcal{L}_\bb}
\newcommand{\MF}{\mathrm{MF}}
\newcommand{\MFT}{\mathrm{MF}_2}
\newcommand{\acc}{\sqsubset}
\begin{document}

\maketitle

\begin{abstract}
    We study interpolation properties for Shavrukov's bimodal logic $\GR$ of usual and Rosser provability predicates. 
    For this purpose, we introduce a new sublogic $\GRC$ of $\GR$ and its relational semantics. 
    Based on our new semantics, we prove that $\GR^\circ$ and $\GR$ enjoy Lyndon interpolation property and uniform interpolation property. 
\end{abstract}

\section{Introduction}

The modal logic $\GL$ is known as the logic of provability. 
Let $\PR_{\PA}(x)$ be a natural provability predicate of $\PA$. 
Well-known Solovay's arithmetical completeness theorem \cite{Solovay} states that the theorems of $\GL$ are exactly the $\PA$-verifiable modal principles under the interpretation of $\Box$ as $\PR_{\PA}$. 
It has been investigated that the logic $\GL$ has good properties such as the Craig interpolation property (CIP), which was independently proved by Smory\'nski \cite{Smorynski} and Boolos \cite{Boolos}. 
In particular, CIP for $\GL$ is important because Smory\'nski showed that the de Jongh--Sambin fixed point theorem for $\GL$ follows from CIP. 

Various non-standard provability predicates have been studied so far, and the most important of which are Rosser provability predicates $\PRR_{\PA}(x)$.
Shavrukov \cite{Shavrukov} introduced the bimodal logic $\GR$ having axioms concerning two modal operators $\Box$ and $\bb$ that are intended to be respectively interpreted as $\PR_{\PA}(x)$ and $\PRR_{\PA}(x)$, and proved that $\GR$ is arithmetically complete with respect to this interpretation. 
For the proof of the arithmetical completeness theorem, Shavrukov introduced an auxiliary Kripke semantics for $\GR$ that deals with formulas of the form $\bb B$ as atomic formulas, and proved the Kripke completeness of $\GR$ with respect to this semantics.
Using this semantics and CIP for $\GL$, Sidon \cite{Sidon} proved that $\GR$ enjoys CIP.

Uniform interpolation property (UIP) and Lyndon interpolation property (LIP) are know as stronger notions than CIP. 
UIP for $\GL$ was proved by Shavrukov \cite{Shavrukov93}. 
The question of whether the logic $\GL$ possesses LIP was proposed by Maksimova \cite[p.~469]{Maksimova} (see also Artemov and Beklemishev \cite[p.~208]{AB}) and was subsequently affirmatively answered by Shamkanov \cite{Shamkanov}. 

The purpose of the present paper is to analyze LIP and UIP for $\GR$ as a continuation of Sidon's work.
However, there are some obstacles when considering LIP in particular. 
Since LIP deals with the positive and negative occurrences of propositional variables in formulas, Sidon's strategy of proving LIP for $\GR$ from LIP for $\GL$ does not seem to work. 
So, we will adopt a strategy of proving LIP for $\GR$ directly semantically following Shamkanov's method of the proof of LIP for $\GL$. 
However, even in this case, since Shavrukov's semantics is only an auxiliary one, it does not seem to be available for the proof of LIP for $\GR$.
Here we focus on the logic $\NR$ which is recently introduced by the second author as the logic of all Rosser provability predicates. 
It is shown in \cite{Kurahashi} that $\NR$ exactly axiomatizes the $\bb$-fragment of $\GR$, and furthermore, $\NR$ is sound and complete with respect to a Kripke-like semantics introduced by Fitting, Marek and Truszczy{\'n}ski \cite{fmt}.
Therefore, we adopt a strategy of proving LIP for $\GR$ by introducing a new semantics obtained by combining the usual Kripke semantics and the semantics for $\NR$.

In Section \ref{sec_pre}, we introduce some basic notions and overview several already known facts. 
In Section \ref{sec_GRC}, we introduce a new sublogic $\GRC$ of $\GR$ and a new relational semantics for $\GRC$ and $\GR$ as mentioned above. 
Our logic $\GRC$ is the base logic of our new semantics, just as Shavrukov introduced the sublogic $\GRM$ of $\GR$ to his semantics. 
In particular, $\GRC$ is intermediate between the logics $\GRM$ and $\GR$. 
Section \ref{sec_LIP_GRC} is devoted to proving LIP for $\GRC$. 
Our proof is based on the newly introduced semantics, and then as a consequence of our proof, we obtain the completeness and finite frame property of $\GRC$ with respect to that semantics. 
In Section \ref{sec_LIP_GR}, we prove that $\GR$ also enjoys LIP. 
As a consequence, we obtain the completeness and finite frame property of $\GR$ with respect to our semantics. 
Moreover, we show that $\GRC + \dfrac{\neg B}{\neg \bb B}$ is an alternative axiomatization of $\GR$. 
In Section \ref{sec_UIP}, through the investigation of the relationships between $\GRM$, $\GRC$, and $\GR$, we prove UIP for $\GRM$, $\GRC$, and $\GR$ based on Sidon's method.
Finally in Section \ref{sec_fw}, we discuss some questions that remain unanswered. 

\section{Preliminaries}\label{sec_pre}

Let $\LT$ denote the language of bimodal propositional logic having countably many propositional variables, propositional constant $\bot$, propositional connectives $\neg$ and $\vee$, and two unary modal operators $\Box$ and $\bb$. 
Other connectives such as $\wedge$ and $\to$ are defined by using $\neg$ and $\vee$ in the usual way. Also, the modal operator $\Diamond$ is the abbreviation of $\neg \Box \neg$. 
Let $\LBox$ and $\Lbb$ be $\Box$-fragment and $\bb$-fragment of $\LT$, respectively. 

\begin{defn}
The logic $\GRM$ in the language $\LT$ is defined as follows: 
    \begin{itemize}
        \item The axioms of $\GRM$ are:
        \begin{enumerate}
            \item All propositional tautologies in the language $\LT$. 
            \item $\Box (A \to B) \to (\Box A \to \Box B)$. 
            \item $\Box (\Box A \to A) \to \Box A$.  
            \item $\bb A \to \Box A$. 
            \item $\Box A \to \Box \bb A$. 
            \item $\Box A \to (\Box \bot \vee \bb A)$.
            \item $\Diamond \bb A \to \Diamond A$. 
            \end{enumerate}
        \item The inference rules of $\GRM$ are modus ponens $\dfrac{A \quad A \to B}{B}$ and $\Box$-necessitation $\dfrac{A}{\Box A}$. 
    \end{itemize}
The logic $\GR$ is obtained from $\GRM$ by adding the rule $\dfrac{\Box A}{A}$. 
\end{defn}

From the arithmetical soundness of $\GR$ (\cite[Lemma 2.5]{Shavrukov}) and Solovay's arithmetical completeness theorem of $\GL$, the following conservation result is obtained. 
(Of course, this conservation result can be proved by using Kripke models.) 

\begin{fact}\label{conservation_GR}
$\GR$ is a conservative extension of $\GL$. 
That is, for any $\LBox$-formula $A$, if $\GR \vdash A$, then $\GL \vdash A$. 
\end{fact}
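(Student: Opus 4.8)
The plan is to derive the conservativity from the arithmetical characterizations of the two logics, exactly as the surrounding text indicates. Recall that an \emph{arithmetical interpretation} is a map $f$ sending each propositional variable to an $\LA$-sentence, extended to all $\LT$-formulas by letting $f$ commute with the Boolean connectives and setting $(\Box B)^f := \PR_{\PA}(\gn{B^f})$ and $(\bb B)^f := \PRR_{\PA}(\gn{B^f})$. The arithmetical soundness of $\GR$ (\cite[Lemma 2.5]{Shavrukov}) states that $\GR \vdash A$ implies $\PA \vdash A^f$ for every arithmetical interpretation $f$, while Solovay's arithmetical completeness theorem for $\GL$ (\cite{Solovay}) states that $\GL \vdash A$ whenever $\PA \vdash A^f$ for every $f$ (for an $\LBox$-formula the clause for $\bb$ is never used, so this coincides with the usual notion of arithmetical interpretation for $\GL$).

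So, suppose $A$ is an $\LBox$-formula with $\GR \vdash A$. First I would observe that, since $A$ contains no occurrence of $\bb$, the clause defining $(\bb B)^f$ is never invoked when computing $A^f$; hence $A^f$ depends only on the values $f$ assigns to propositional variables, and it agrees with the arithmetical realization of $A$ figuring in the statement of Solovay's theorem. By the arithmetical soundness of $\GR$ we obtain $\PA \vdash A^f$ for every such $f$, and therefore, by Solovay's arithmetical completeness theorem for $\GL$, $\GL \vdash A$. That is all that is required.

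I do not expect any real obstacle here: the only point that needs care is the routine bookkeeping verification that the notion of arithmetical interpretation for $\GR$, restricted to $\bb$-free formulas, reduces to the notion used for $\GL$, which is immediate from the definitions. For completeness I record the alternative purely modal route alluded to in the remark: given an $\LBox$-formula $A$ with $\GL \nvdash A$, one takes a finite transitive irreflexive Kripke countermodel refuting $A$ at some point, expands it by an auxiliary component interpreting $\bb$ so as to validate the axioms of $\GR$ while leaving the $\Box$-semantics, and hence the truth value of the $\bb$-free formula $A$, untouched, and then checks that the rule $\dfrac{\Box A}{A}$ is respected; this gives $\GR \nvdash A$. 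We will not need this second argument.
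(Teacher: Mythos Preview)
Your proposal is correct and is exactly the argument the paper indicates: combine the arithmetical soundness of $\GR$ with Solovay's arithmetical completeness theorem for $\GL$, noting that for $\bb$-free formulas the two notions of arithmetical interpretation coincide. The paper likewise mentions the alternative Kripke-model route only parenthetically, so your treatment matches in both substance and emphasis.
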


 Shavrukov introduced an auxiliary Kripke semantics for $\GRM$ and $\GR$ to prove the arithmetical completeness theorem for $\GR$. 
Let $\MF_\bb$ be the set of all $\Lbb$-formulas.

\begin{defn}  \leavevmode
\begin{itemize}
    \item We say that a quadruple $(W, \acc, r, \Vdash)$ is a \textit{$\GR^-$-model} iff $W$ is a non-empty set, $\acc$ is a transitive and conversely well-founded binary relation on $W$, $r$ is the root element of $W$ with respect to $\acc$, and $\Vdash$ is a satisfaction relation on $W \times \MF_\bb$ fulfilling the usual conditions for propositional connectives and the following conditions: for $w \in W$, 
    \begin{itemize}
        \item $w \Vdash \Box B \iff \forall x \in W\, (w \acc x \Rightarrow x \Vdash B)$,
        \item $w \Vdash B$ for each axiom $B$ of $\GRM$. 
    \end{itemize}
    \item A $\GRM$-model $(W, \acc, r, \Vdash)$ is called \textit{non-trivial} iff there exists an element $w \in W$ such that $r \neq w$. 
\end{itemize}
\end{defn}

By using the Kripke completeness of $\GL$, Shavrukov proved the soundness and completeness of $\GRM$ and $\GR$ with respect to the corresponding classes of $\GRM$-models. 

\begin{fact}[Shavrukov {\cite[Theorems 1.8 and 1.9]{Shavrukov}}]\label{compl_GRM}
Let $A$ be any $\LT$-formula. 
\begin{enumerate}
    \item $\GRM \vdash A$ if and only if $A$ is valid in all $\GRM$-models. 
    \item $\GR \vdash A$ if and only if $A$ is valid in all non-trivial $\GRM$-models. 
\end{enumerate}
\end{fact}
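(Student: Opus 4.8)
\medskip
\noindent\textbf{Proof sketch (proposal).}
The plan is to follow the quoted remark and reduce both statements to the Kripke completeness of $\GL$, treating every subformula of the form $\bb C$ as a fresh atom. For soundness in (1) I would argue by induction on the length of a $\GRM$-derivation: axioms hold at every world by the definition of a $\GRM$-model, modus ponens is immediate, and $\Box$-necessitation uses that every point-generated subframe of a $\GRM$-model is again a $\GRM$-model (transitivity, converse well-foundedness, the $\Box$-clause and the requirement ``$w\Vdash B$ for each axiom $B$'' are all inherited upward). For (2) the only new case is the rule $\frac{\Box A}{A}$, and here I would isolate the following lemma: \emph{if $M$ is a non-trivial $\GRM$-model, then adjoining a new root $r^{+}$ below every world of $M$, keeping all old valuations, putting any valuation on the atoms at $r^{+}$, and setting $r^{+}\Vdash\bb C$ iff $r^{+}\Vdash\Box C$ for all $C$, yields again a non-trivial $\GRM$-model}. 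Granting this, if $A$ fails at a world $w$ of a non-trivial $\GRM$-model $M$, then either $w$ is not the root, so $A$ fails at a successor of the root and hence $\Box A$ fails at the root of $M$, or $w$ is the root, so $\Box A$ fails at $r^{+}$ in the non-trivial model $M^{+}$; contrapositively, validity of $\Box A$ in all non-trivial $\GRM$-models implies validity of $A$ there, which is soundness of the rule.

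For completeness of (1), suppose $\GRM\nvdash A$. Let $\bb B_{1},\dots,\bb B_{n}$ list the subformulas of $A$ of the form $\bb(\cdot)$, pick fresh variables $q_{1},\dots,q_{n}$, and let $(\cdot)^{\sharp}$ be the translation into $\LBox$ that commutes with the connectives and with $\Box$ and sends $\bb B_{i}$ to $q_{i}$. Let $\Xi$ consist of the $\sharp$-translations of the instances $\bb B_{i}\to\Box B_{i}$, $\Box B_{i}\to\Box\bb B_{i}$, $\Box B_{i}\to(\Box\bot\vee\bb B_{i})$, $\Diamond\bb B_{i}\to\Diamond B_{i}$ for $i\le n$. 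Using that $\GL$ is substitution-closed, that $\GL\subseteq\GRM$, and that the substitution $q_{i}\mapsto\bb B_{i}$ turns the members of $\Xi$ into $\GRM$-axioms, a standard argument gives that $\neg A^{\sharp}\wedge\bigwedge\Xi\wedge\Box\bigwedge\Xi$ is $\GL$-consistent. By Kripke completeness and the finite model property of $\GL$, fix a finite transitive conversely well-founded rooted model $\mathfrak{M}=(W,\acc,r,\Vdash_{0})$ refuting $A^{\sharp}$ at $r$; since $r$ also forces $\bigwedge\Xi\wedge\Box\bigwedge\Xi$ and $\mathfrak{M}$ is rooted, $\Xi$ holds at every world. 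Define a model $\mathfrak{M}^{\circ}$ on the same frame, agreeing with $\mathfrak{M}$ on the original variables, and with the forcing of formulas $\bb C$ defined by recursion on $C$: if $\bb C=\bb B_{i}$ then $w\Vdash\bb C$ iff $w\Vdash_{0}q_{i}$; otherwise $w\Vdash\bb C$ iff ($w$ is a leaf and $w\Vdash C$) or ($w$ is not a leaf and $x\Vdash C$ for every successor $x$ of $w$). A routine truth lemma then gives $w\Vdash_{\mathfrak{M}^{\circ}}D$ iff $w\Vdash_{0}D^{\sharp}$ for every subformula $D$ of $A$, so $A$ fails at $r$ in $\mathfrak{M}^{\circ}$.

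The step I expect to be the main obstacle is verifying that $\mathfrak{M}^{\circ}$ is genuinely a $\GRM$-model, i.e.\ that \emph{every} instance of the schematic axioms 4--7 holds at \emph{every} world, including instances whose principal formula lies outside the finite fragment under control. This should come down to a case analysis: at a leaf $w$ all of 4--7 hold vacuously (there $\Box E$ is true and $\Diamond E$ false for every $E$); at a non-leaf $w$ with $\bb C\ne\bb B_{i}$ for all $i$, the clause forces $w\Vdash\bb C$ to be equivalent to $w\Vdash\Box C$, so axioms 4 and 6 are immediate and 5 and 7 follow from transitivity of $\acc$; and for $C=B_{i}$ the four axioms translate, via the truth lemma, exactly into the members of $\Xi$, which hold everywhere in $\mathfrak{M}$. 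The ``new root'' lemma used in (2) is verified by a parallel computation; axioms 5 and 7 at $r^{+}$ are the only non-trivial cases, and they rely on transitivity together with the non-triviality hypothesis, which guarantees that every leaf of $M$ has a predecessor in $M$.

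Finally, for completeness of (2) I would use only the following observation. Suppose $A$ is valid in all non-trivial $\GRM$-models, and let $x$ be an $\acc$-successor of some world in some $\GRM$-model $N$; then $N$ has at least two worlds, hence is non-trivial, so $A$ is valid in $N$ and in particular $x\Vdash A$. Since this applies to every successor world in every $\GRM$-model, $\Box A$ is valid in \emph{all} $\GRM$-models, so part (1) gives $\GRM\vdash\Box A$, and the rule $\frac{\Box A}{A}$ then yields $\GR\vdash A$.
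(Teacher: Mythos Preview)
Your proposal is correct and matches the approach the paper attributes to Shavrukov (and later makes explicit as Fact~\ref{dagger}): treat each outermost $\bb$-subformula of $A$ as a fresh propositional variable, encode axioms 4--7 for those subformulas as $\LBox$-side conditions, reduce to the Kripke completeness of $\GL$, and then expand the resulting $\GL$-model to a $\GRM$-model by choosing $\bb$-truth appropriately outside the fixed fragment. Your new-root lemma for the soundness of the rule $\Box A/A$ and your leaf/non-leaf fallback clause for the remaining $\bb C$ are valid ways to fill in details that the paper leaves to the cited reference.
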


As an application of Fact \ref{compl_GRM}, the following useful fact is obtained. 

\begin{fact}[Shavrukov {\cite[Corollary 1.10]{Shavrukov}}]\label{GRM_GR}
For any $\LT$-formula $A$, $\GR \vdash A$ if and only if $\GRM \vdash \Box A$. 
\end{fact}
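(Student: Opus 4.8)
The plan is to read this equivalence off directly from the Kripke completeness results of Fact~\ref{compl_GRM}, handling the direction ``$\GRM\vdash\Box A\Rightarrow\GR\vdash A$'' by a one-line syntactic argument and the converse semantically by contraposition. (One could instead attempt an induction on the length of a $\GR$-derivation of $A$, but that route is awkward, for the reason explained at the end.)

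The direction ``$\GRM\vdash\Box A\Rightarrow\GR\vdash A$'' costs almost nothing: since every axiom and rule of $\GRM$ is available in $\GR$, from $\GRM\vdash\Box A$ we obtain $\GR\vdash\Box A$, and a single application of the rule $\dfrac{\Box A}{A}$ then yields $\GR\vdash A$. For the converse I would argue contrapositively. Assume $\GRM\not\vdash\Box A$. By Fact~\ref{compl_GRM}(1) there are a $\GRM$-model $(W,\acc,r,\Vdash)$ and a world $w\in W$ with $w\not\Vdash\Box A$; by the truth clause for $\Box$ this gives some $v\in W$ with $w\acc v$ and $v\not\Vdash A$. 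The key point is that such a model is automatically \emph{non-trivial}: since $\acc$ is conversely well-founded it is irreflexive, so $w\acc v$ forces $w\neq v$, whence $W$ contains an element distinct from the root $r$. Thus $(W,\acc,r,\Vdash)$ is a non-trivial $\GRM$-model in which $A$ fails (at $v$), and Fact~\ref{compl_GRM}(2) gives $\GR\not\vdash A$, as required.

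There is no genuine obstacle here; the substance is already packaged into Fact~\ref{compl_GRM}, and the only thing to notice is that a countermodel to $\Box A$ cannot be the one-point model. It is worth recording why the more naive route fails: in an induction on a $\GR$-derivation of $A$, the base case (axioms of $\GRM$) and the cases of modus ponens and of $\Box$-necessitation go through routinely using only the distribution axiom and necessitation, but the case where $A$ is obtained by the rule $\dfrac{\Box B}{B}$ would require $\GRM\vdash\Box\Box B\to\Box B$, which is false (for instance, in a two-world $\GRM$-model $r\acc v$ with $v$ a dead end and $B$ false at $v$ one has $r\Vdash\Box\Box B$ but $r\not\Vdash\Box B$). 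This is exactly why passing through the semantics, where the rule $\dfrac{\Box A}{A}$ amounts simply to restricting attention to non-trivial models, is the natural move.
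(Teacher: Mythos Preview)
Your proposal is correct and matches the paper's intended approach: the paper does not spell out a proof but simply remarks that the result is obtained ``as an application of Fact~\ref{compl_GRM}'', and your argument is precisely that application, with the key observation that a countermodel to $\Box A$ must contain two $\acc$-related (hence distinct) worlds and is therefore non-trivial. Your closing remark explaining why a naive induction on $\GR$-derivations breaks down at the rule $\dfrac{\Box B}{B}$ is also accurate.
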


Sidon \cite{Sidon} applied Fact \ref{compl_GRM} to the study of interpolation property of $\GRM$ and $\GR$. 
For each formula $A$, let $v(A)$ denote the set of all propositional variables occurring in $A$. 

\begin{defn}[CIP]
We say that a logic $L$ enjoys \textit{Craig interpolation property} (\textit{CIP}) iff for any formulas $A$ and $B$, if $L \vdash A \to B$, then there exists a formula $C$ such that $v(C) \subseteq v(A) \cap v(B)$, $L \vdash A \to C$, and $L \vdash C \to B$. 
\end{defn}

\begin{fact}[Sidon {\cite[Theorems 2.1 and 2.3]{Sidon}}]
The logics $\GRM$ and $\GR$ enjoy Craig interpolation property. 
\end{fact}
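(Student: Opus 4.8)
The plan is to reduce CIP for $\GRM$ to CIP for $\GL$ by exploiting Shavrukov's semantics (Fact~\ref{compl_GRM}), under which a $\GRM$-model amounts to a $\GL$-Kripke model over an enlarged supply of atoms --- a new atom $q_{\bb C}$ for each $\bb$-prefixed $\LT$-formula $\bb C$ --- at which the relevant instances of the schemata $(4)$--$(7)$ hold at every world. Let $(\cdot)^\circ$ be the translation of $\LT$ into $\LBox$ that replaces each outermost subformula $\bb C$ by $q_{\bb C}$ and commutes with $\Box$ and the Boolean connectives; it leaves the genuine propositional variables untouched. For $\LT$-formulas $A,B$ let $\Phi$ denote the finite set of all instances of $(4)$--$(7)$ whose principal formula is a subformula of $A$ or of $B$. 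The crux of the argument is the claim that
\[
\GRM \vdash A \to B \iff \GL \vdash \bigwedge_{\varphi \in \Phi}\bigl(\varphi^\circ \wedge \Box \varphi^\circ\bigr) \to \bigl(A^\circ \to B^\circ\bigr).
\]

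The direction from right to left is immediate: substituting $\bb C$ back for $q_{\bb C}$ turns the $\GL$-theorem into a $\GL$-theorem of $\LT$, and since each $\varphi\in\Phi$ is an axiom of $\GRM$ and each $\Box\varphi$ follows by necessitation, the conjuncts can be discharged. For the converse I argue semantically. Suppose the $\GL$-theorem fails; by the finite frame property of $\GL$ there is a finite $\GL$-model $M$, rooted at some $w_0$, at every world of which each $\varphi^\circ$ ($\varphi\in\Phi$) holds while $A^\circ\to B^\circ$ fails at $w_0$. Expand $M$ to a $\GRM$-model by keeping the given values of the finitely many atoms occurring in the formulas $\varphi^\circ$, $A^\circ$, $B^\circ$ and valuing every other $\bb$-formula $\bb C$ by the recipe ``$w\Vdash\bb C$ iff $w$ has no successor and $w\Vdash C$, or $w$ has a successor and $w\Vdash\Box C$''. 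A case analysis shows that all instances of $(4)$--$(7)$ then hold everywhere: for the pinned formulas this is exactly the hypothesis on $M$, and for the recipe-valued ones it follows from transitivity together with the definition of the recipe, using only the truth values (not the origin) of the principal formula at the successors. Since $\GRM\vdash A\to B$, the formula $A\to B$ holds throughout this $\GRM$-model, so $A^\circ\to B^\circ$ holds at $w_0$ --- a contradiction. I expect this expansion lemma, and in particular the verification of axiom $(5)$ at worlds all of whose successors carry pinned values, to be the main obstacle.

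Given the claim, CIP for $\GRM$ follows. An instance of $(4)$--$(7)$ on a formula $D$ has exactly the propositional variables of $D$, and every subformula of $A$ or of $B$ has its variables inside $v(A)$, resp. $v(B)$; hence $\Phi$ splits as $\Phi_A\cup\Phi_B$ with the variables of $\bigwedge_{\Phi_A}(\varphi^\circ\wedge\Box\varphi^\circ)$ inside $v(A)\cup\{q_{\bb C}: \bb C$ a subformula of $A\}$ and symmetrically for $\Phi_B$. Rewriting the $\GL$-theorem as $\GL\vdash\bigl(A^\circ\wedge\bigwedge_{\Phi_A}(\varphi^\circ\wedge\Box\varphi^\circ)\bigr)\to\bigl(\bigwedge_{\Phi_B}(\psi^\circ\wedge\Box\psi^\circ)\to B^\circ\bigr)$ and applying CIP for $\GL$ produces an $\LBox$-interpolant whose variables are common to the two sides; substituting $\bb C$ back for $q_{\bb C}$ turns it into an $\LT$-formula $C$, and a variable count gives $v(C)\subseteq v(A)\cap v(B)$ --- a genuine variable of the interpolant already lies there, while a $q_{\bb C}$ occurring in it occurs on both sides, which forces $v(C)\subseteq v(B)$, resp. $v(A)$, once $q_{\bb C}$ is replaced by $\bb C$. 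Applying the same substitution to the two $\GL$-derivations and discharging the conjuncts as before yields $\GRM\vdash A\to C$ and $\GRM\vdash C\to B$.

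For $\GR$ one starts from Fact~\ref{GRM_GR}: $\GR\vdash A\to B$ iff $\GRM\vdash\Box(A\to B)$, and likewise $\GR\vdash A\to C$ and $\GR\vdash C\to B$ iff $\GRM\vdash\Box(A\to C)$ and $\GRM\vdash\Box(C\to B)$, so it suffices to interpolate ``under a box''. The same expansion lemma gives the analogous reduction, with $A^\circ\to B^\circ$ now replaced by the boxed target $\Box(A^\circ\to B^\circ)$; but this boxed target does not itself split into an $A$-side and a $B$-side, and coping with that --- the genuine contribution of the rule $\tfrac{\Box A}{A}$ --- is the remaining difficulty. One possible route is to introduce a fresh variable $t$, use that $\GL\vdash\Gamma\to\Box(A^\circ\to B^\circ)$ is equivalent to $\GL\vdash(\Gamma\wedge\Box(t\leftrightarrow A^\circ))\to\Box(t\to B^\circ)$, apply CIP for $\GL$ to the split version, and then remove $t$ using the uniform interpolation property of $\GL$; a likely more robust route is to amalgamate two non-trivial $\GRM$-models over worlds satisfying the same formulas in the shared vocabulary, the delicate point being that the amalgamation must be carried out --- again using the recipe above --- so that $(4)$--$(7)$ survive at the newly created worlds. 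Together with the expansion lemma, this handling of the reflection rule is where I expect the real work to lie.
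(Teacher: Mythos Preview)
Your $\GRM$ argument is Sidon's, and it matches the reduction the paper records as Fact~\ref{dagger} and exploits in the proof of Theorem~\ref{UIP_GRM}; the expansion step you flag is Shavrukov's \cite[Lemma~1.6]{Shavrukov}, and your recipe for the unpinned $\bb$-formulas goes through by a routine case split on whether the successor world is terminal, using only transitivity. So that half is fine.

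The $\GR$ half has a genuine gap. Route~1 does not close: after introducing $t$ via $\Box(t\leftrightarrow A^\circ)$, the $B$-side target $\Gamma_B\to\Box(t\to B^\circ)$ still contains $t$, and UIP for $\GL$ eliminates a variable only against consequents free of that variable, so the implication to the $B$-side cannot be recovered from the $t$-free uniform interpolant. Route~2 (amalgamation of non-trivial $\GRM$-models) may be workable, but you give no construction, and keeping all instances of axioms (4)--(7) valid through the glueing is precisely the sort of thing that needs a proof rather than a hope. Sidon's actual argument avoids ``interpolation under a box'' altogether: he first proves a syntactic embedding of $\GR$ into $\GRM$ (his Theorem~2.2, cited in the paper just before Proposition~\ref{GRC_GR}), namely a variable-preserving transformation $A\mapsto A'$ commuting with implication such that $\GR\vdash A$ iff $\GRM\vdash A'$ and $\GR\vdash A\leftrightarrow A'$. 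With this in hand, CIP for $\GR$ is a one-line corollary of CIP for $\GRM$: interpolate $A'\to B'$ in $\GRM$ and transport the interpolant back via the $\GR$-equivalences. The paper's Propositions~\ref{GRM_GRC} and~\ref{GRC_GR} carry out analogous embeddings explicitly and show what such a transformation looks like --- one rewrites those $\bb$-subformulas $\bb D$ for which the stronger logic already settles $D$.
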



Shavrukov introduced $\GR$ as the bimodal provability logic of the usual provability predicate and all Rosser provability predicates. 
Recently, the second author introduced the logic $\NR$ and proved that this logic is the provability logic of all Rosser provability predicates \cite{Kurahashi}. 
The logic $\NR$ is a non-normal modal logic based on Fitting--Marek--Truszczy{\'n}ski's pure logic of necessitation $\N$ \cite{fmt}. 

\begin{defn}
The logic $\N$ in the language $\Lbb$ is defined as follows: 
    \begin{itemize}
        \item The axioms of $\N$ are all propositional tautologies in the language $\Lbb$.
        \item The inference rules of $\N$ are modus ponens and $\bb$-necessitation $\dfrac{A}{\bb A}$. 
    \end{itemize}
The logic $\NR$ is obtained from $\N$ by adding the \textit{Rosser rule} $\dfrac{\neg A}{\neg \bb A}$. 
\end{defn}

It is proved that $\NR$ axiomatizes exactly the $\bb$-fragment of $\GR$. 

\begin{fact}[Kurahashi {\cite[Thorem 6.2]{Kurahashi}}]
$\GR$ is a conservative extension of $\NR$. 
That is, for any $\Lbb$-formula $A$, $\NR \vdash A$ if and only if $\GR \vdash A$.
\end{fact}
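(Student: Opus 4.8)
The plan is to prove the two implications separately; the implication $\NR \vdash A \Rightarrow \GR \vdash A$ is routine, while the converse is the substantive part. For the routine direction I would argue by induction on the length of an $\NR$-derivation, checking that each axiom of $\NR$ is a theorem of $\GR$ and each rule of $\NR$ is admissible in $\GR$. Propositional tautologies in $\Lbb$ are among the axioms of $\GRM$, hence theorems of $\GR$, and $\GR$ is closed under modus ponens. For $\bb$-necessitation: if $\GR \vdash A$, then $\GR \vdash \Box A$ by $\Box$-necessitation, hence $\GR \vdash \Box \bb A$ by the axiom $\Box A \to \Box \bb A$, and finally $\GR \vdash \bb A$ by the rule $\frac{\Box A}{A}$. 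For the Rosser rule: if $\GR \vdash \neg A$, then $\GRM \vdash \Box \neg A$ by Fact~\ref{GRM_GR}, and since the contrapositive of the axiom $\Diamond \bb A \to \Diamond A$ gives $\GRM \vdash \Box \neg A \to \Box \neg \bb A$, we obtain $\GRM \vdash \Box \neg \bb A$, whence $\GR \vdash \neg \bb A$ again by Fact~\ref{GRM_GR}. (Alternatively, since every $\NR$-theorem is $\PA$-provable under every Rosser interpretation, this direction also follows from Shavrukov's arithmetical completeness of $\GR$.)

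For the converse $\GR \vdash A \Rightarrow \NR \vdash A$, where $A$ is an $\Lbb$-formula, I would argue semantically and by contraposition. Assuming $\NR \not\vdash A$, I would use the completeness of $\NR$ with respect to the relational semantics of \cite{fmt}, with respect to which $\NR$ is complete \cite{Kurahashi}, to fix a countermodel for $\NR$ together with a world refuting $A$, and then transform it into a non-trivial $\GRM$-model containing a world that refutes $A$; Fact~\ref{compl_GRM}(2) would then give $\GR \not\vdash A$. Concretely, the underlying $\GRM$-frame would be a transitive, conversely well-founded tree with a fresh root $r$, built above $r$ out of (copies of) the worlds of the $\NR$-countermodel and enlarged by auxiliary terminal worlds serving as witnesses for the $\Box\bot$ disjunct of the axiom $\Box A \to (\Box\bot \vee \bb A)$. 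The forcing of $\Lbb$-formulas --- recall that $\GRM$-models treat each $\bb B$ as atomic --- would be specified so that it agrees with the $\NR$-countermodel on the subformulas of $A$ at the designated world and so that every instance of the axioms $\bb A \to \Box A$, $\Box A \to \Box \bb A$, $\Box A \to (\Box\bot \vee \bb A)$, and $\Diamond \bb A \to \Diamond A$ holds at every world. Non-triviality is then immediate from the construction.

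The step I expect to be the main obstacle is precisely this transformation. The relational semantics for $\NR$ and the notion of a $\GRM$-model are organized quite differently: in the former, $\bb$ is not governed by a single accessibility relation and the distribution principle $\bb(A \to B) \to (\bb A \to \bb B)$ need not hold, whereas a $\GRM$-model carries one transitive, conversely well-founded relation $\acc$ on which $\Box$ behaves normally and to which $\bb$ is tied by the coherence axioms. Defining $\acc$ together with the forcing so that all of $\bb A \to \Box A$, $\Box A \to \Box \bb A$, $\Box A \to (\Box\bot \vee \bb A)$ and $\Diamond \bb A \to \Diamond A$ hold everywhere --- in particular the negative coupling $\Box\neg A \to \Box\neg\bb A$ coming from $\Diamond \bb A \to \Diamond A$, and the dichotomy of $\Box A \to (\Box\bot \vee \bb A)$, which is what forces the auxiliary terminal worlds into the construction --- while still refuting $A$ somewhere, is the crux. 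I would approach this by first extracting a sufficiently tame (say, finite and tree-like) $\NR$-countermodel, then unravelling it while recording precisely which subformulas of $A$ of the form $\bb B$ must hold at each new world, and finally verifying the coherence axioms world by world.
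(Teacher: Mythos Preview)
The paper does not contain a proof of this statement: it is stated as a \emph{Fact} with a citation to \cite[Theorem 6.2]{Kurahashi} and is not re-proved here. So there is no in-paper proof to compare your proposal against.

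That said, your forward direction is correct and essentially what the paper itself uses elsewhere (the admissibility of $\bb$-necessitation and the Rosser rule in $\GR$ is exactly how the paper justifies that $\GRC$ and $\GRB$ sit below $\GR$). Your plan for the converse, via transforming a serial $\N$-countermodel into a non-trivial $\GRM$-model, is plausible but, as you note, the construction of the forcing so that all four coupling axioms hold is genuinely delicate, and your proposal stops at a sketch rather than a proof.

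It is worth noting that the machinery developed \emph{later} in this paper yields a much shorter argument than the $\GRM$-model route you outline. Given a serial $\N$-model $(W,\{\prec_B\}_{B\in\MF_\bb},\Vdash)$ refuting an $\Lbb$-formula $A$ (Fact~\ref{ffp_NR}), set $\acc=\emptyset$ and choose any serial $\prec_B$ for $B\in\MFT\setminus\MF_\bb$; all four $\GRC$-frame conditions are vacuous since $\acc=\emptyset$, the resulting frame is $\bb$-serial, and the truth of $\Lbb$-formulas is unchanged. Then Theorem~\ref{ffp_GR} gives $\GR\nvdash A$. This avoids entirely the ``main obstacle'' you identify. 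Of course, this uses results that appear after the Fact is stated (and the Fact is not used in their proofs, so there is no circularity), which may explain why the paper simply cites the external reference rather than proving it in place.
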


Fitting, Marek and Truszczy{\'n}ski introduced a Kripke-like relational semantics for $\N$. 

\begin{defn} \leavevmode
  \begin{itemize}
    \item A tuple $(W, \{\prec_B\}_{B \in \MF_{\bb}})$ is called an \textit{$\N$-frame} iff $W$ is a non-empty set and $\prec_B$ is a binary relation on $W$ for every $B \in \MF_{\bb}$. 
    \item A triple $(W, \{\prec_B\}_{B \in \MF_{\bb}}, \Vdash)$ is said to be an \textit{$\N$-model} iff $(W, \{\prec_B\}_{B \in \MF_{\bb}})$ is an $\N$-frame and $\Vdash$ is a satisfaction relation between $W$ and $\MF_\bb$ fulfilling the usual conditions for connectives and the following condition:
    \[
        w \Vdash \bb B \iff \forall x \in W\, (w \prec_B x \Rightarrow x \Vdash B).
    \]
    We say an $\N$-frame $(W, \{\prec_B\}_{B \in \MF_{\bb}})$ is \textit{serial} iff for all $B \in \MF_\bb$ and $w \in W$, there exists an $x \in W$ such that $w \prec_B x$. 
    \end{itemize}
\end{defn}

It is shown that the logics $\N$ and $\NR$ are sound and complete with respect to the above semantics:

\begin{fact}[Fitting, Marek and Truszczy{\'n}ski {\cite[Theorems 3.6 and 4.10]{fmt}}] \label{ffp_N}
For any $A \in \MF_\bb$, $\N \vdash A$ if and only if $A$ is valid on all $\N$-frames.
\end{fact}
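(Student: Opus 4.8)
The plan is to prove soundness and completeness separately, the guiding observation being that, modulo the necessitation rule, a formula $\bb B$ behaves in $\N$ essentially like a fresh propositional atom; this is what makes both directions elementary.

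For soundness I would argue by induction on the length of an $\N$-derivation. All propositional tautologies are valid on every $\N$-model since $\Vdash$ respects the Boolean connectives, and modus ponens obviously preserves validity. For $\bb$-necessitation, suppose $B$ is valid on all $\N$-frames, i.e.\ $x \Vdash B$ for every world $x$ of every $\N$-model; then for an arbitrary $\N$-model and world $w$ we have $x \Vdash B$ for every $x$ with $w \prec_B x$ (there being no constraint on which, if any, such $x$ exist), whence $w \Vdash \bb B$ by the semantic clause for $\bb$. Thus $\bb B$ is valid as well.

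For completeness I would build a canonical $\N$-model and prove a truth lemma. Let $W$ be the set of all maximal $\N$-consistent subsets of $\MF_\bb$, and put $w \Vdash p$ iff $p \in w$ for every propositional variable $p$; the values of $\Vdash$ on compound formulas are then forced by the defining clauses of an $\N$-model. The crucial point for defining the relations $\prec_B$ is: if $\bb B \notin w$ for some $w \in W$, then $\N \nvdash B$ — for otherwise $\N \vdash \bb B$ by necessitation and hence $\bb B \in w$ — so $\{\neg B\}$ is $\N$-consistent and extends, by Lindenbaum's lemma, to some $x_B \in W$ with $B \notin x_B$. Fix one such $x_B$ for each $B$ with $\N \nvdash B$, and define $w \prec_B x$ to hold iff $\bb B \notin w$ and $x = x_B$ (so $\prec_B = \varnothing$ when $\N \vdash B$, which is harmless). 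Now prove by induction on the construction of $C \in \MF_\bb$ that $w \Vdash C \iff C \in w$ for all $w \in W$. The atomic and Boolean cases are routine; for $C = \bb B$ we get, by the induction hypothesis, that $w \Vdash \bb B$ iff $B \in x$ for every $x$ with $w \prec_B x$. If $\bb B \in w$ there is no such $x$, so $w \Vdash \bb B$; if $\bb B \notin w$ then $w \prec_B x_B$ and $B \notin x_B$, so $w \not\Vdash \bb B$. This establishes the truth lemma, and completeness follows: if $\N \nvdash A$, extend $\{\neg A\}$ to some $w \in W$, whence $w \not\Vdash A$ and $A$ fails on the underlying $\N$-frame.

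No step here is genuinely hard; the one thing requiring care is the simultaneous design of the family $\{\prec_B\}_{B \in \MF_\bb}$ so that a world containing $\bb B$ has no $\prec_B$-successor at all (making $\bb B$ true vacuously) while a world lacking $\bb B$ receives exactly the single witness $x_B$ refuting $B$, together with the check that this is coherent when each $x_B$ is chosen once and for all, independently of $w$. If in addition one wants the finite frame property, it suffices to replace maximal $\N$-consistent sets by maximal consistent subsets of a finite subformula-closed (and negation-closed) set of formulas, which leaves the argument unchanged.
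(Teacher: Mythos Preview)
Your argument is correct. The soundness direction is routine, and your canonical-model construction for completeness is sound: the key observation that $\bb B \notin w$ forces $\N \nvdash B$ (via necessitation) is exactly what lets you manufacture the single witness $x_B$, and defining $\prec_B$ so that worlds containing $\bb B$ have no $\prec_B$-successor while worlds lacking $\bb B$ see only $x_B$ makes the truth lemma go through cleanly in the modal case.

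There is, however, nothing to compare against in the present paper: the statement is recorded as a \emph{Fact} with a citation to Fitting, Marek and Truszczy\'nski \cite{fmt}, and the paper supplies no proof of its own. Your write-up is essentially the standard canonical-model argument one would expect for a logic like $\N$, and your closing remark about restricting to a finite adequate set to obtain the finite frame property is also correct.
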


\begin{fact}[Kurahashi {\cite[Theorem 3.12]{Kurahashi}}] \label{ffp_NR}
For any $A \in \MF_\bb$, $\NR \vdash A$ if and only if $A$ is valid on all serial $\N$-frames.
\end{fact}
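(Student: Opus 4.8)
The plan is to prove the two directions separately: soundness by induction on $\NR$-derivations, and completeness by a canonical model construction adapted to the Fitting--Marek--Truszczy{\'n}ski semantics.

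For soundness I would fix an arbitrary serial $\N$-model and show by induction on the length of an $\NR$-derivation that every theorem is satisfied at every world. Propositional tautologies and modus ponens are immediate, and $\bb$-necessitation is handled by the usual vacuous argument: if $A$ is valid, then for every $w$ we have $x \Vdash A$ whenever $w \prec_A x$, so $w \Vdash \bb A$. The crucial case is the Rosser rule $\dfrac{\neg A}{\neg \bb A}$: assume $\neg A$ is valid, i.e.\ no world satisfies $A$. Given any $w$, seriality supplies an $x$ with $w \prec_A x$, and since $x \not\Vdash A$ the defining clause for $\bb$ fails at $w$, so $w \Vdash \neg \bb A$. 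Thus seriality is precisely the semantic counterpart of the Rosser rule.

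For completeness I would argue contrapositively: assume $\NR \nvdash A$. Let $W$ be the set of all maximal $\NR$-consistent sets, set $w \Vdash p \iff p \in w$, and --- this is the delicate point, since $\bb$ is non-normal and each $\bb B$ carries its own relation --- define, for every $B \in \MF_\bb$,
\[
 w \prec_B x \iff (\bb B \in w \Rightarrow B \in x).
\]
Then I would prove the Truth Lemma $w \Vdash C \iff C \in w$ by induction on $C$, the propositional cases being routine and the only interesting case $C = \bb B$. If $\bb B \in w$, then $w \prec_B x$ forces $B \in x$, hence $x \Vdash B$ by the induction hypothesis, so $w \Vdash \bb B$. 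Conversely, if $\bb B \notin w$, then $\NR \nvdash B$ (otherwise $\bb B$ would be a theorem by necessitation and hence lie in $w$), so $\{\neg B\}$ is consistent; extending it to some $x \in W$, the implication defining $\prec_B$ holds vacuously, so $w \prec_B x$ while $B \notin x$, giving $w \not\Vdash \bb B$.

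It remains to verify that this canonical model is serial, and this is exactly where the Rosser rule is used. Fix $B$ and $w$. If $\bb B \notin w$, the witness $x$ built above already satisfies $w \prec_B x$. If $\bb B \in w$, then $\NR \nvdash \neg B$ --- for otherwise the Rosser rule would give $\NR \vdash \neg \bb B$, contradicting the consistency of $w$ --- so $\{B\}$ is consistent and extends to some $x \in W$ with $B \in x$, whence $w \prec_B x$. Finally, since $\NR \nvdash A$, some $w_0 \in W$ contains $\neg A$, so $w_0 \not\Vdash A$ in this serial $\N$-model, and $A$ fails to be valid on all serial $\N$-frames. I expect the main obstacle to be getting the non-standard relations $\prec_B$ right: the single definition above must simultaneously make the $\bb$-case of the Truth Lemma succeed in both directions and keep the model serial, and it is the interaction of necessitation (which puts $\bb B$ into every MCS when $B$ is a theorem) with the Rosser rule (which keeps $\bb B$ out of every MCS when $\neg B$ is a theorem) that makes this choice work.
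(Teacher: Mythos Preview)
The paper does not prove this statement; it is recorded as a \emph{Fact} with a citation to \cite[Theorem 3.12]{Kurahashi}, so there is no in-paper proof to compare against. Your argument is correct and is essentially the standard canonical-model proof adapted to the Fitting--Marek--Truszczy{\'n}ski semantics: the soundness direction is straightforward (seriality is exactly what makes the Rosser rule sound), and for completeness your choice $w \prec_B x \iff (\bb B \in w \Rightarrow B \in x)$ together with maximal $\NR$-consistent sets makes both directions of the $\bb$-clause in the Truth Lemma go through, with $\bb$-necessitation supplying the witness when $\bb B \notin w$ and the Rosser rule supplying it when $\bb B \in w$.

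One small point worth making explicit: the statement is phrased in terms of \emph{frame} validity, while your canonical construction produces a single serial $\N$-\emph{model} in which $A$ fails; this is of course enough, since exhibiting one model based on a serial frame in which $A$ fails shows that $A$ is not valid on that frame.
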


\section{The logic $\GRC$ and its semantics}\label{sec_GRC}

In this section, we introduce the new sublogic $\GRC$ of $\GR$ and its relational semantics. 
Our semantics is a combination of usual Kripke semantics and Fitting--Marek--Truszczy{\'n}ski's semantics. 

\begin{defn}
    The logic $\GRC$ is obtained from $\GRM$ by adding $\bb$-necessitation $\dfrac{A}{\bb A}$. 
\end{defn}

Since the rule $\dfrac{A}{\bb A}$ is admissible in $\GR$, we have that $\GRC$ is intermediate between $\GRM$ and $\GR$. 
So the following proposition immediately follows from Fact \ref{GRM_GR}. 

\begin{prop}\label{GRC_GR}
For any $\LT$-formula $A$, $\GR \vdash A$ if and only if $\GRC \vdash \Box A$. 
\end{prop}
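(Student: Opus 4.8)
The plan is to derive this from Fact \ref{GRM_GR}, which already gives us that $\GR \vdash A$ iff $\GRM \vdash \Box A$. Since $\GRC$ sits between $\GRM$ and $\GRM$-plus-the-rule-$\Box A/A$ (which is $\GR$), and is obtained from $\GRM$ by adding $\bb$-necessitation, it suffices to show two inclusions: first, that $\GRM \vdash \Box A$ implies $\GRC \vdash \Box A$ (trivial, since $\GRC$ extends $\GRM$), and second, that $\GRC \vdash \Box A$ implies $\GRM \vdash \Box A$. Combining the second with Fact \ref{GRM_GR} then yields the forward direction of the equivalence; the reverse direction $\GRC \vdash \Box A \Rightarrow \GR \vdash A$ follows because $\GRC \subseteq \GR$ (as remarked just before the proposition, $\bb$-necessitation is admissible in $\GR$) together with the rule $\Box A/A$ of $\GR$.

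The crux is therefore the conservativity claim: if $\GRC \vdash \Box A$ then $\GRM \vdash \Box A$. First I would observe that $\GRM$ already proves $\Box B \to \Box \bb B$ (axiom 5), and more generally, by $\Box$-necessitation and axiom 2, that $\GRM \vdash \Box B \to \Box \bb B$ can be iterated, so that $\GRM \vdash \Box B \to \Box C$ whenever $C$ is obtained from $B$ by prefixing some $\bb$'s inside a $\Box$. The point is that the only thing $\GRC$ adds over $\GRM$ is the rule $A/\bb A$, and under a $\Box$ this extra power is already available in $\GRM$: whenever $\GRM \vdash A$, we get $\GRM \vdash \Box A$ by necessitation, hence $\GRM \vdash \Box \bb A$ by axiom 5 and modus ponens. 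So I would prove by induction on the length of a $\GRC$-derivation of a formula $D$ that $\GRM \vdash \Box D$. The base case (axioms of $\GRM$, which are also axioms of $\GRC$) uses $\Box$-necessitation in $\GRM$. For modus ponens, if $\GRM \vdash \Box D$ and $\GRM \vdash \Box(D \to E)$ then $\GRM \vdash \Box E$ by axiom 2. For $\Box$-necessitation producing $\Box D$ from $D$: by induction hypothesis $\GRM \vdash \Box D$, and then $\GRM \vdash \Box \Box D$ by necessitation. For the new rule, $\bb$-necessitation producing $\bb D$ from $D$: by induction hypothesis $\GRM \vdash \Box D$, and then $\GRM \vdash \Box \bb D$ follows from axiom 5 ($\Box D \to \Box \bb D$) and modus ponens. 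This closes the induction, and in particular if $\GRC \vdash \Box A$ then taking $D := \Box A$ gives $\GRM \vdash \Box \Box A$; but $\GRM \vdash \Box \Box A \to \Box A$ fails in general, so instead one should run the induction to conclude directly from $\GRC \vdash \Box A$, viewing $\Box A$ as the derived formula $D$, which gives $\GRM \vdash \Box \Box A$ — not quite what is wanted.

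To avoid this last mismatch, the cleaner formulation is: prove by induction on $\GRC$-derivations that for every formula $D$ derivable in $\GRC$, $\GRM \vdash \Box D$, and separately note $\GRM \vdash \Box(\Box A \to A)$ is not available, so instead apply the induction to the \emph{premises} of the final $\Box$-necessitation step. That is, if $\GRC \vdash \Box A$, then either this was an axiom of $\GRM$ (done), or it was obtained by modus ponens (handled as above, reducing to shorter derivations of formulas whose $\Box$-prefixings are $\GRM$-provable), or it was obtained by $\Box$-necessitation from a $\GRC$-proof of $A$, in which case the induction gives $\GRM \vdash \Box A$ directly. The only genuinely new case in the induction is the $\bb$-necessitation rule, and the observation above — that $\GRM \vdash \Box A$ implies $\GRM \vdash \Box \bb A$ via axiom 5 — is exactly what makes it go through. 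The main obstacle is getting the induction hypothesis stated in the right form (namely, ``$\GRM$ proves the box of everything $\GRC$ proves'') so that it is preserved by all three rules simultaneously; once that is set up, each case is a one-line application of axioms 2 and 5 together with $\Box$-necessitation in $\GRM$.
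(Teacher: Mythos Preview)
The paper's proof is a single sentence: since $\GRM \subseteq \GRC \subseteq \GR$ (the inclusion $\GRC \subseteq \GR$ having been noted just before the proposition), the claim follows immediately from Fact~\ref{GRM_GR}. Concretely, $\GR \vdash A \Rightarrow \GRM \vdash \Box A \Rightarrow \GRC \vdash \Box A$, and $\GRC \vdash \Box A \Rightarrow \GR \vdash \Box A \Rightarrow \GR \vdash A$ by the rule $\Box A/A$. You state exactly these two chains in your first paragraph, so the correct proof is already there; the slip is only that you write ``the second'' inclusion where the \emph{first} is what is used---it is $\GRM \subseteq \GRC$, not its converse, that combines with Fact~\ref{GRM_GR} to yield $\GR \vdash A \Rightarrow \GRC \vdash \Box A$. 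Your ``second inclusion'' $\GRC \vdash \Box A \Rightarrow \GRM \vdash \Box A$ plays no role once you have the direct route $\GRC \subseteq \GR$ plus the reflection rule for the other direction.

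The syntactic induction you then pursue toward that ``crux'' is therefore unnecessary, and in the form you give it does not close. The induction correctly establishes $\GRC \vdash D \Rightarrow \GRM \vdash \Box D$ for every $D$ (the $\bb$-necessitation step via axiom~5 is fine), but instantiating $D := \Box A$ yields only $\GRM \vdash \Box\Box A$. Your attempted repair by case analysis on the last rule deriving $\Box A$ fails in the modus ponens case: from $\GRM \vdash \Box C$ and $\GRM \vdash \Box(C \to \Box A)$ you again recover only $\GRM \vdash \Box\Box A$, not $\GRM \vdash \Box A$. There is no $\GRM$-internal passage from $\Box\Box A$ to $\Box A$; to cross that gap you would have to invoke $\GR$ or Fact~\ref{GRM_GR}, at which point the detour collapses back into the one-line argument above.
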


We introduce our relational semantics for $\GRC$. 
Let $\MFT$ be the set of all $\LT$-formulas. 

\begin{defn}[$\GRC$-frames and $\GRC$-models] \leavevmode
  \begin{itemize}
    \item A triple $(W, \acc, \{\prec_B\}_{B \in \MFT})$ is called a \textit{$\GRC$-frame} iff $W$ is a non-empty set, $\acc$ is a transitive and conversely well-founded binary relation on $W$ and for every $B \in \MFT$, $\prec_B$ is a binary relation on $W$ satisfying the following four conditions: 
    \begin{itemize}
        \item $\forall x, y \in W\, (x \acc y \Rightarrow x \prec_B y)$, 
        \item $\forall x, y, z \in W\, (x \acc y\ \&\ y \prec_B z \Rightarrow x \acc z)$, 
        \item $\forall x, y, z \in W\, (x \prec_B y\ \&\ x \acc z \Rightarrow x \acc y)$, 
        \item $\forall x, y \in W\, \bigl(x \acc y \Rightarrow \exists z \in W\, (y \prec_B z\ \&\ x \acc z) \bigr)$. 
    \end{itemize}

   \item A quadruple $(W, \acc, \{\prec_B\}_{B \in \MFT}, \Vdash)$ is said to be a \textit{$\GRC$-model} iff $(W, \acc, \{\prec_B\}_{B \in \MFT})$ is a $\GRC$-frame and $\Vdash$ is a satisfaction relation on $W \times \MFT$ fulfilling the usual conditions for each propositional connective and the following two conditions: 
    \begin{itemize}
        \item $w \Vdash \Box B \iff \forall x \in W\, (w \acc x \Rightarrow x \Vdash B)$,
        \item $w \Vdash \bb B \iff \forall x \in W\, (w \prec_B x \Rightarrow x \Vdash B)$.   
    \end{itemize}
    \end{itemize}
\end{defn}

The following proposition saying that $\GRC$ is actually sound with respect to our $\GRC$-frames is easily proved by induction on the length of proofs of $A$ in $\GRC$ and so we leave the proof to the reader. 

\begin{prop}\label{soundness_GRC}
For any $\LT$-formula $A$, if $\GRC \vdash A$, then $A$ is valid on all $\GRC$-frames. 
\end{prop}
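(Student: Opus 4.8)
The plan is to argue by induction on the length of a $\GRC$-derivation of $A$. Since validity on all $\GRC$-frames unwinds to "$w \Vdash A$ for every world $w$ of every $\GRC$-model", I would first fix an arbitrary $\GRC$-model $(W, \acc, \{\prec_B\}_{B \in \MFT}, \Vdash)$ and an arbitrary $w \in W$, and then show $w \Vdash A$ whenever $\GRC \vdash A$; stating this reduction explicitly at the outset is what legitimizes invoking the induction hypothesis inside the same fixed model in the necessitation cases.

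For the base case I would verify that every axiom of $\GRM$ is forced at $w$. Propositional tautologies and the distribution axiom $\Box(A \to B) \to (\Box A \to \Box B)$ are immediate from the satisfaction clause for $\Box$, and L\"ob's axiom $\Box(\Box A \to A) \to \Box A$ follows from transitivity and converse well-foundedness of $\acc$ exactly as in the soundness proof for $\GL$. The substantive part is axioms (4)--(7), and here the four frame conditions are tailored to do precisely the required work: axiom (4) $\bb A \to \Box A$ uses the first condition $(x \acc y \Rightarrow x \prec_B y)$; axiom (5) $\Box A \to \Box \bb A$ uses the second condition $(x \acc y\ \&\ y \prec_B z \Rightarrow x \acc z)$; axiom (6) $\Box A \to (\Box \bot \vee \bb A)$ uses the third condition $(x \prec_B y\ \&\ x \acc z \Rightarrow x \acc y)$ together with the remark that $w \not\Vdash \Box \bot$ forces $w$ to have an $\acc$-successor; and axiom (7) $\Diamond \bb A \to \Diamond A$ uses the fourth condition $\bigl(x \acc y \Rightarrow \exists z\, (y \prec_B z\ \&\ x \acc z)\bigr)$. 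Each verification is a one-line chase through the relevant condition.

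For the inductive step, modus ponens is trivial, and for $\Box$-necessitation and $\bb$-necessitation, if $A$ is valid on all $\GRC$-frames then in particular $x \Vdash A$ for every $x \in W$ in the fixed model, whence $w \Vdash \Box A$ and $w \Vdash \bb A$ follow directly from the two satisfaction clauses. I do not anticipate a genuine obstacle here: the frame conditions were designed so that the verifications of axioms (4)--(7) go through immediately, and the modal rules are sound for essentially formal reasons. The only point requiring a bit of care is bookkeeping in the necessitation cases, namely keeping the ambient model fixed so that "valid on all $\GRC$-frames'' can be applied worldwise within it.
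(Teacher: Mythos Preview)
Your proposal is correct and follows exactly the approach the paper indicates: the paper simply states that the proposition ``is easily proved by induction on the length of proofs of $A$ in $\GRC$ and so we leave the proof to the reader.'' Your axiom-by-axiom matching of frame conditions (condition~1 for axiom~(4), condition~2 for axiom~(5), condition~3 for axiom~(6), condition~4 for axiom~(7)) is accurate, and the treatment of the rules is fine.
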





The following proposition says that the logic $\GRC$ is strictly intermediate between the logics $\GRM$ and $\GR$. 

\begin{prop}
\begin{enumerate}
    \item $\GRC \vdash \bb \neg \bot$ and $\GR \vdash \neg \bb \bot$, 
    \item $\GRM \nvdash \bb B$ for any $\LT$-formula $B$,   
    \item $\GRC \nvdash \neg \bb B$ for any $\LT$-formula $B$. 
\end{enumerate}
\end{prop}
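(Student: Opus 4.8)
The plan is to prove the three items separately, using provability in $\GRC$ and $\GR$ for the positive claims and the semantic completeness facts (Facts \ref{compl_GRM}, \ref{ffp_NR}) together with soundness (Proposition \ref{soundness_GRC}) for the underivability claims.

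For item (1), $\GRC \vdash \bb \neg \bot$ is immediate: $\neg \bot$ is a propositional tautology, so it is an axiom of $\GRM$, hence a theorem of $\GRC$, and applying the $\bb$-necessitation rule of $\GRC$ gives $\GRC \vdash \bb \neg \bot$. For $\GR \vdash \neg \bb \bot$, I would argue as follows. Since $\GR$ extends $\GRM$, axiom 6 gives $\GR \vdash \Box \bot \to (\Box \bot \vee \bb \bot)$, which is not quite what we want; instead use the Rosser-type behaviour. Actually the cleanest route: in $\GR$, from axiom 7 we have $\Diamond \bb \bot \to \Diamond \bot$, i.e. $\neg \Box \neg \bb \bot \to \neg \Box \neg \bot$, equivalently $\Box \bot \to \Box \neg \bb \bot$ (contraposition, since $\neg \bot$ and $\bot$ are interchanged). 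Hmm, let me instead use Fact \ref{GRM_GR}: $\GR \vdash \neg \bb \bot$ iff $\GRM \vdash \Box \neg \bb \bot$. By axiom 4, $\bb \bot \to \Box \bot$, so $\Diamond \neg \Box \bot \to \Diamond \neg \bb \bot$ after contraposition inside $\Box$-necessitation; but more directly, $\GL \vdash \Box(\neg \bb \bot)$ would follow once we see $\bb \bot$ implies $\Box \bot$ and then a $\GL$-style Löb argument gives $\Box \bot$ is refutable at the root... The safe and rigorous path is: $\GRM \vdash \Box A \to (\Box \bot \vee \bb A)$; substitute and use Löb. I would verify $\GRM \vdash \Box \neg \bb \bot$ by a short derivation combining axioms 3, 4, 6, then invoke Fact \ref{GRM_GR} to conclude $\GR \vdash \neg \bb \bot$.

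For item (2), suppose toward a contradiction that $\GRM \vdash \bb B$ for some $\LT$-formula $B$. Build a $\GRM$-model in which $\bb B$ fails at the root. Take the single-point irreflexive frame $W = \{r\}$ with $\acc = \emptyset$. Then every $\Box$-formula is true at $r$ vacuously, and since $\GRM$-models deal with $\Lbb$-formulas as atoms over such Kripke frames, one needs a $\GRM$-model where $r \not\Vdash \bb B$; because $\bb B$ is treated as atomic in Shavrukov's semantics, we are free to set $r \not\Vdash \bb B$ provided all $\GRM$-axioms stay valid. The only axioms constraining $\bb$ are 4, 5, 6, 7. On the one-point frame, axiom 4 ($\bb A \to \Box A$) holds since $\Box A$ is true; axiom 6 ($\Box A \to \Box \bot \vee \bb A$) — here $\Box A$ is true and $\Box \bot$ is true, so it holds; axiom 5 ($\Box A \to \Box \bb A$) holds as both sides involve true $\Box$-formulas; axiom 7 ($\Diamond \bb A \to \Diamond A$) holds vacuously since $\Diamond$ anything is false. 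So setting $r \not\Vdash \bb C$ for all $C$ yields a legitimate $\GRM$-model refuting $\bb B$, contradicting soundness (Fact \ref{compl_GRM}(1)).

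For item (3), suppose $\GRC \vdash \neg \bb B$ for some $\LT$-formula $B$. By Proposition \ref{soundness_GRC}, $\neg \bb B$ is valid on all $\GRC$-frames, so it suffices to exhibit a $\GRC$-model with $r \Vdash \bb B$. The fourth $\GRC$-frame condition forces seriality of each $\prec_B$ on the part of the frame reachable by $\acc$, but at an $\acc$-maximal point (or in a one-point frame with $\acc = \emptyset$) there is no constraint forcing $\prec_B$ to be nonempty: indeed on $W = \{r\}$ with $\acc = \emptyset$ we may set $\prec_B = \emptyset$ for every $B$, and all four frame conditions hold vacuously. Then $r \Vdash \bb B$ vacuously for every $B$, so $r \not\Vdash \neg \bb B$, contradicting validity. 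The main obstacle — really the only point requiring care — is item (1)'s derivation of $\GRM \vdash \Box \neg \bb \bot$: one must chase the interaction of Löb's axiom with axioms 4 and 6 correctly, whereas items (2) and (3) are routine once the right trivial frame is chosen.
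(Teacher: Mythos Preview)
Items 2 and 3 are handled exactly as in the paper: the same one-point models with $W = \{r\}$, $\acc = \emptyset$ (and additionally $\prec_B = \emptyset$ for item 3) are used, with the same verification that the $\GRM$-axioms, respectively the $\GRC$-frame conditions, hold vacuously. These parts are correct.

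For item 1, $\GRC \vdash \bb \neg \bot$ is correct as you say. For $\GR \vdash \neg \bb \bot$, your overall strategy via Fact \ref{GRM_GR} is sound, but the execution has a slip: the contrapositive of axiom 7 with $A = \bot$, namely of $\Diamond \bb \bot \to \Diamond \bot$, is $\Box \neg \bot \to \Box \neg \bb \bot$, not $\Box \bot \to \Box \neg \bb \bot$. With the correct version you are done immediately: $\GRM \vdash \Box \neg \bot$ by $\Box$-necessitation of the tautology $\neg \bot$, whence $\GRM \vdash \Box \neg \bb \bot$, and Fact \ref{GRM_GR} yields $\GR \vdash \neg \bb \bot$. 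Your fallback to ``axioms 3, 4, 6'' is vague and does not obviously work (axiom 6 with $A = \bot$ is a triviality, and I do not see how L\"ob plus axiom 4 alone produce $\Box \neg \bb \bot$), so drop that detour and simply fix the contrapositive.
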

\begin{proof}
The first clause is easily proved. 

2. Consider a model $\mathcal{M} = (W, \acc, r, \Vdash)$, where $W = \{r\}$, $\acc = \emptyset$, and $r \nVdash \bb B$ for all $B \in \MFT$. 
It is easy to see that $\mathcal{M}$ is a $\GRM$-model. 
So, by Fact \ref{compl_GRM}, we obtain $\GRM \nvdash \bb B$. 

3. Consider a model $\mathcal{M} = (W, \acc, \{\prec_B\}_{B \in \MFT}, \Vdash)$, where $W = \{r\}$, $\acc = \prec_B = \emptyset$ for all $B \in \MFT$. 
It can be seen that $\mathcal{M}$ is a $\GRC$-model. 
Since $r \Vdash \bb B$, by Proposition \ref{soundness_GRC}, we have $\GRC \nvdash \neg \bb B$. 
\end{proof}

We close this section with the following conservation theorem. 

\begin{thm}\label{conservation}
$\GRC$ is a conservative extension of $\N$. 
That is, for any $\Lbb$-formula $A$, if $\GRC \vdash A$, then $\N \vdash A$. 
\end{thm}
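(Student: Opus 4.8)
The plan is to prove the contrapositive: assuming $\N \nvdash A$ for an $\Lbb$-formula $A$, I will construct a $\GRC$-model refuting $A$ at some world, and then invoke Proposition~\ref{soundness_GRC} to conclude $\GRC \nvdash A$. By Fact~\ref{ffp_N}, since $\N \nvdash A$, there is an $\N$-model $(V, \{\prec^V_B\}_{B \in \MF_\bb}, \Vdash_V)$ and a world $v_0 \in V$ with $v_0 \nVdash_V A$. The task is to expand this $\N$-model into a full $\GRC$-model, i.e. to supply a conversely well-founded transitive relation $\acc$ on some world set, to define relations $\prec_B$ for \emph{all} $\LT$-formulas $B$ (not merely $\Lbb$-formulas), and to do so in a way that (a) satisfies the four frame conditions linking $\acc$ and the $\prec_B$, and (b) preserves the truth values of all $\Lbb$-subformulas of $A$ so the refutation survives.

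The key construction I would use is to take $\acc = \emptyset$ on the world set $W = V$. With $\acc$ empty, it is vacuously transitive and conversely well-founded, and the first three frame conditions become trivial (their hypotheses mention $x \acc y$, $y \acc z$, or $x \acc z$, all false). The fourth condition, $\forall x, y\,(x \acc y \Rightarrow \exists z\,(y \prec_B z\ \&\ x \acc z))$, is also vacuous. So \emph{any} family $\{\prec_B\}_{B \in \MFT}$ of binary relations on $V$ gives a $\GRC$-frame. In particular, for $\Lbb$-formulas $B$ I set $\prec_B \;:=\; \prec^V_B$ (the relation from the $\N$-model), and for $\LT$-formulas $B$ containing $\Box$ I set $\prec_B := \emptyset$ (or anything at all — it is irrelevant to $A$). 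Since $\acc = \emptyset$, every world satisfies $\Box C$ for all $C$, but this does not affect $A$, which is $\Box$-free. Then a routine induction on the structure of $\Lbb$-formulas $C$ shows that for every $w \in V$ and every $\Lbb$-formula $C$, $w \Vdash C$ in the new $\GRC$-model if and only if $w \Vdash_V C$ in the original $\N$-model; the modal clause matches because $w \Vdash \bb C' \iff \forall x\,(w \prec_{C'} x \Rightarrow x \Vdash C')$ and $\prec_{C'} = \prec^V_{C'}$. Hence $v_0 \nVdash A$ in the $\GRC$-model, so by Proposition~\ref{soundness_GRC}, $\GRC \nvdash A$.

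The only genuinely delicate point — and the step I expect to need the most care — is the verification that setting $\acc = \emptyset$ is legitimate, i.e. that the $\GRC$-frame conditions and the definition of $\GRC$-model do not secretly require $W$ to contain a $\acc$-least or $\acc$-greatest element or otherwise forbid an empty accessibility relation. Reading the definition, the only structural demands on $\acc$ are transitivity and converse well-foundedness, both satisfied by $\emptyset$, and $W$ need only be non-empty; there is no rootedness requirement in the $\GRC$-frame definition (unlike Shavrukov's $\GR^-$-models, which stipulate a root $r$). So the construction goes through. One should double-check that no axiom among (1)--(7) of $\GRM$ — which, recall, must hold at every world of a $\GRM$-model but are handled here via \emph{soundness} (Proposition~\ref{soundness_GRC}) rather than imposed by fiat — causes trouble; but since Proposition~\ref{soundness_GRC} already guarantees every $\GRC$-frame validates all of $\GRC$, this is automatic and needs no separate argument. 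An alternative, should one prefer not to rely on the empty relation, would be to first extract a finite $\N$-model (if finite model property for $\N$ is available from Fact~\ref{ffp_N}) and glue a fresh $\acc$-structure below it, but the empty-$\acc$ approach is cleaner and avoids any reshuffling of the $\prec_B$ relations.
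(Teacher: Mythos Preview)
Your proposal is correct and follows essentially the same route as the paper's proof: take the refuting $\N$-model, set $\acc = \emptyset$ and $\prec_B = \emptyset$ for every $B \in \MFT \setminus \MF_\bb$, observe that all four $\GRC$-frame conditions are vacuous, and conclude via Proposition~\ref{soundness_GRC}. The paper's argument is more terse, but the content is identical.
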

\begin{proof}
We prove the contrapositive. 
Suppose $\N \nvdash A$, then by Fact \ref{ffp_N}, we find an $\N$-model $(W, \{\prec_B\}_{B \in \MF_\bb}, \Vdash)$ and $w \in W$ such that $w \nVdash A$. 
We expand this $\N$-model to a $\GRC$-model $(W, \acc, \{\prec_B\}_{B \in \MFT}, \Vdash^*)$ by letting $\acc = \prec_B = \emptyset$ for all $B \in \MFT \setminus \MF_\bb$. 
It is easy to see that $w \nVdash^* A$, and hence we obtain $\GRC \nvdash A$ by Proposition \ref{soundness_GRC}. 
\end{proof}

\section{Lyndon interpolation property for $\GRC$}\label{sec_LIP_GRC}

In this section, we prove that $\GRC$ enjoys Lyndon interpolation property. 
As a consequence of our proof, we also prove that $\GRC$ is complete and has the finite frame property with respect to $\GRC$-frames.

For every $\LT$-formula $A$, the set $S^+(A)$ of all positive subformulas and the set $S^-(A)$ of all negative subformulas of $A$ are recursively defined as follows:
\begin{itemize}
\item $S^+(A) = \{A\}$ when $A$ is either a propositional variable or $\bot$, 
\item $S^+(B \vee C) = S^+(B) \cup S^+(C) \cup \{B \vee C\}$ and $S^-(B \vee C) = S^-(B) \cup S^-(C)$, 
\item $S^+(\neg B) = S^-(B) \cup \{\neg B\}$ and $S^-(\neg B) = S^+(B)$, 
\item $S^+ (\bg B) = S^+ (B) \cup \{\bg B\}$ and $S^-(\bg B) = S^-(B)$ for $\bg \in \{\Box, \bb\}$.
\end{itemize}

For any $\LT$-formula $A$, let $v^\ast(A) : = S^\ast(A) \cap v(A)$ for $\ast \in \{+, -\}$ and $S(A) := S^+(A) \cup S^-(A)$. 
Here, $S(A)$ is the set of all subformulas of $A$. 

\begin{defn}
We say that a logic $L$ enjoys \textit{Lyndon interpolation property} (\textit{LIP}) iff for any formulas $A$ and $B$, if $L \vdash A \to B$, then there exists a formula $C$ such that $v^*(C) \subseteq v^\ast(A) \cap v^\ast(B)$ for $\ast \in \{+, -\}$, $L \vdash A \to C$, and $L \vdash C \to B$.
We say that such a formula $C$ is a \textit{Lyndon interpolant} of $A \to B$ in $L$. 
\end{defn}

For each $\LT$-formula $A$, we define the set $\tau(A)$ as follows: 
\[
    \tau(A) := \{ p \mid p\in v^+ (A)\} \cup \{ \neg p \mid p \in v^- (A)\}. 
\]
The set $\tau(A)$ is introduced to simplify the notation, and the idea of using this set is due to Shamkanov \cite{Shamkanov}. 
More precisely, it is shown  that `$v^* (C) \subseteq v^* (A) \cap v^* (B)$ for every $* \in \{+, -\}$' if and only if $\tau(C) \subseteq \tau(A) \cap \tau(B)$. 
So, Lyndon interpolant of $A \to B$ in $L$ is exactly a formula $C$ satisfying $\tau(C) \subseteq \tau(A) \cap \tau(B)$, $L \vdash A \to C$ and $L \vdash C \to B$.

We are ready to prove LIP for $\GRC$. 
Moreover, we prove a slightly stronger result that yields LIP for $\GL$. 
Thus, the following theorem is a strengthening of Shamkanov's theorem on LIP for $\GL$. 

\begin{thm}[LIP for $\GRC$]\label{LIP_GRC}
$\GRC$ enjoys Lyndon interpolation property.
Moreover, for any $\LT$-formulas $A$ and $B$, if $\GRC \vdash A \to B$, then there exists a Lyndon interpolant $C$ of $A \to B$ in $\GRC$ satisfying the following condition:
\begin{itemize}
\item[$(\ddag)$] For $* \in \{ +, -\}$ and $D \in \MFT$, if $\bb D \in S^*(C)$, then $ \bb D \in S^*(A) \cap S^*(B)$.
\end{itemize}
\end{thm}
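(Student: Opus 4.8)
The plan is to follow Shamkanov's semantic strategy for proving LIP, adapted to the new $\GRC$-semantics that combines the Kripke relation $\acc$ with the family $\{\prec_B\}$. The overall scheme is a completeness-style argument carried out symmetrically on both "sides" $A$ and $B$: assuming $\GRC \nvdash A \to C$ fails for every candidate interpolant, we build (step by step, bottom-up along the finite conversely-well-founded frame) a pair of $\GRC$-models witnessing $\GRC \nvdash A \to B$. Since $\GRC$ is not known in advance to be complete with respect to $\GRC$-frames, the construction must produce, simultaneously, both the interpolant and the model; soundness (Proposition~\ref{soundness_GRC}) then gives one direction and the explicit construction the other, and completeness plus the finite frame property fall out as by-products, exactly as announced.

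Concretely, I would work with \emph{Lyndon sets}: maximal $\GRC$-consistent-style subsets of a fixed adequate (subformula-closed) set $\Gamma$ of $\LT$-formulas, but where negations of variables and of $\bb$-subformulas are tracked with their polarities so that the $\tau$-bookkeeping and the condition $(\ddag)$ can be maintained. The nodes of the model being built are pairs $(\Phi, \Psi)$ of such sets — one "coloured" by $A$-side formulas, one by $B$-side formulas — glued along a shared interpolant formula, following Shamkanov's two-sided tableau/consistency construction for $\GL$. For the $\Box$-modality one uses the standard $\GL$-style step: to refute $\Box D$ one passes to an $\acc$-successor node containing $\neg D$ together with all $\Box$-formulas (and $\Box D$ itself, to drive well-foundedness), and conversely-well-foundedness is guaranteed because $\Gamma$ is finite and the set of $\Box$-formulas strictly grows. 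For the $\bb$-modality one uses the Fitting–Marek–Truszczyński-style step for $\N$: to refute $\bb D$ at a node $w$ one creates a fresh successor $x$ with $w \prec_D x$ and $x \Vdash \neg D$, with \emph{no} constraint linking distinct $\prec_D$'s — this is precisely why $\GRC$ (which has $\bb$-necessitation but not the Rosser rule) is the right base logic, and why a single $\N$-type step suffices without seriality.

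The genuinely new work, and the main obstacle, is to make the $\prec_B$-construction respect the four frame conditions in the definition of $\GRC$-frame (the inclusion $\acc\subseteq\prec_B$, the two "absorption" conditions $x\acc y\ \&\ y\prec_B z\Rightarrow x\acc z$ and $x\prec_B y\ \&\ x\acc z\Rightarrow x\acc y$, and the existence condition $x\acc y\Rightarrow\exists z(y\prec_B z\ \&\ x\acc z)$) \emph{while simultaneously} keeping the polarity bookkeeping intact. The delicate interaction is that the axioms $\bb A\to\Box A$, $\Box A\to\Box\bb A$, $\Box A\to(\Box\bot\vee\bb A)$ and $\Diamond\bb A\to\Diamond A$ force correlations between the $\Box$-successors and the $\bb$-successors of a node, so when one adds an $\prec_D$-successor to refute $\bb D$, one must also check it does not disturb the $\Box$-part already built and vice versa; concretely, the absorption conditions demand that every $\prec_D$-edge out of a node either "lies inside" the $\acc$-cone or is invisible from any $\acc$-predecessor, and the existence condition demands a uniform supply of $\prec_D$-successors above each $\acc$-edge. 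I would handle this by building the frame in finitely many rounds indexed by the (finite) $\acc$-height, at each node closing off first under the $\Box$-requirements and then, on the already-fixed carrier set, defining each $\prec_D$ to be $\acc$ together with exactly one extra edge to a freshly adjoined "phantom" node carrying $\neg D$, checking the four conditions hold by construction. Throughout, the interpolant $C$ is read off as the disjunction over the $A$-side components of the conjunction of their characteristic formulas, and $(\ddag)$ is ensured because a $\bb D$ is ever placed (positively or negatively) into a Lyndon set only when it already occurs with that polarity on both sides — this is exactly the extra invariant that the two-sided construction must carry, and verifying it survives every modal step is where the bulk of the care goes.

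Finally, I would close the argument by checking the truth lemma for the constructed $\GRC$-model (each node forces precisely the formulas in its Lyndon set), which combined with soundness yields $C$ as a genuine Lyndon interpolant satisfying $(\ddag)$; specializing to $\LBox$-formulas and invoking Fact~\ref{conservation_GR} recovers Shamkanov's LIP for $\GL$, justifying the claim that Theorem~\ref{LIP_GRC} strengthens it. I expect the routine parts — propositional saturation, the $\Box$-step, the well-foundedness count — to go through as in the $\GL$ case, with essentially all the real difficulty concentrated in the compatibility of the $\prec_D$-edges with the four frame conditions and with the polarity constraint.
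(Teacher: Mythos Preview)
Your overall strategy --- two-sided inseparability pairs (``Lyndon sets'') indexed by $A$ and $\neg B$, followed by a countermodel construction and truth lemma --- is the same as the paper's, and you correctly identify that the whole difficulty lies in making the $\prec_D$-relations satisfy the four $\GRC$-frame conditions while respecting polarity. However, your concrete proposal for $\prec_D$ has a genuine gap.

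You propose to take $\prec_D$ to be $\acc$ together with an extra edge from each node to a ``freshly adjoined phantom node carrying $\neg D$''. This cannot satisfy the fourth frame condition $x \acc y \Rightarrow \exists z\,(y \prec_D z \ \&\ x \acc z)$ at $\acc$-terminal nodes. Suppose $x \acc y$ and $y$ is $\acc$-maximal (so $y \Vdash \Box\bot$). The only $\prec_D$-successor of $y$ under your scheme is the phantom $z$, but a freshly adjoined $z$ is not $\acc$-accessible from $x$, so the condition fails. If instead you make $z$ $\acc$-accessible from $x$, then $z$ enters the $\acc$-structure and interferes with the $\Box$-part you claimed was ``already fixed'' (and $y$ would in any case still not be terminal if you also put $y \acc z$). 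More generally, the third condition forces every $\prec_D$-successor of a non-$\acc$-terminal node to lie in its $\acc$-cone, so the phantom trick is only available at terminal nodes --- precisely where the fourth condition bites.

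The paper resolves this not with phantoms but by a case split on $\Box\bot$. The carrier $W$ consists of finite $\lessdot$-chains of maximally inseparable pairs; $\acc$ is proper end-extension. At a node $\seq{a_0,\ldots,a_l}$ with $\Box\bot \notin (a_l)_0 \cup (a_l)_1$, one sets $\prec_D = \acc$ outright (the axiom $\Box D \to (\Box\bot \vee \bb D)$ makes $\Box D$ and $\bb D$ coincide there, so no extra edges are needed). At a terminal node with $\Box\bot \in (a_l)_0 \cup (a_l)_1$, the $\prec_D$-successors are taken among the \emph{siblings} $\seq{a_0,\ldots,a_{l-1},d}$ already present in $W$ --- nodes sharing the prefix up to $a_{l-1}$. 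Such siblings are $\acc$-accessible from every $\acc$-predecessor of the current node (they extend $\seq{a_0,\ldots,a_{l-1}}$), which is exactly what the fourth condition requires; their existence is secured by showing that suitable pairs like $(\{\Box D, \neg D\} \cup \{\Box E, E \mid \Box E \in (a_{l-1})_0\},\ \{\Box F, F \mid \Box F \in (a_{l-1})_1\})$ are inseparable, using $\Diamond \bb D \to \Diamond D$. The root case $l=0$ is where $\bb$-necessitation is actually used: inseparability of $(\{\neg D\},\emptyset)$ follows because $\GRC \vdash D$ would give $\GRC \vdash \bb D$. Your remark that ``a single $\N$-type step suffices'' with ``no constraint linking distinct $\prec_D$'s'' underestimates how tightly the frame conditions couple $\prec_D$ to $\acc$; the sibling mechanism, not fresh phantoms, is the missing idea.
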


\begin{proof}
Towards a contradiction, suppose that there exist some $\LT$-formulas $A$ and $B$ such that $\GRC \vdash A \to B$ and $A \to B$ has no Lyndon interpolant in $\GRC$ with the condition $(\ddag)$.
For each $C \in \{ A,  \neg B\}$, let $\Phi (C)$ be the union of the following sets:
\begin{itemize}
\item
$S(C) \cup \{{\sim} B \mid B \in S(C)\}$, 
\item 
$\{ \bot, \neg \bot, \Box \bot, \neg \Box \bot \}$, 
\item 
$ \{ \Box D, \neg \Box D,
\Box \neg \bb D,
\Diamond \bb D, \Box \bb D, \neg \Box \bb D \mid \bb D \in S(C) \}$.
\end{itemize}
For pairs of finite sets of $\LT$-formulas $(\Gamma_1, \Delta_1)$ and $(\Gamma_2, \Delta_2)$, we say that $(\Gamma_2, \Delta_2)$ is an extension of $(\Gamma_1, \Delta_1)$ if $\Gamma_1 \subseteq \Gamma_2$ and $\Delta_1 \subseteq \Delta_2$. In this case, we write $(\Gamma_1, \Delta_1) \subseteq (\Gamma_2, \Delta_2)$.
We say that a pair of finite sets $(\Gamma, \Delta)$ such that $(\Gamma, \Delta) \subseteq (\Phi(A), \Phi(\neg B))$ is \textit{inseparable} if there is no $\LT$-formula $C$ satisfying $(\ddag)$ and the following conditions:
\begin{itemize}
\item $\GRC \vdash \bigwedge \Gamma \to C$, 
\item $\GRC \vdash \bigwedge \Delta \to \neg C$, 
\item $\tau(C) \subseteq \tau(A) \cap \tau( B)$.
\end{itemize}

It is easy to see that if $(\Gamma, \Delta)$ is inseparable, then $\bot \notin \Gamma \cup \Delta$. 
We say that $(\Gamma, \Delta)$ is $(A, \neg B)$-\textit{maximally inseparable} if it is maximal among inseparable pairs $(\Gamma', \Delta') \subseteq (\Phi(A), \Phi(\neg B))$. 
It is easy to show that any inseparable pair can be extended to an $(A,  \neg B)$-maximally inseparable pair.

First, we define the set $W'$ and the binary relation $\lessdot$ on $W'$ as follows: 
\begin{itemize}
\item $W' := \{ (\Gamma, \Delta) \mid$ $(\Gamma, \Delta)$ is $(A, \neg B)$-maximally inseparable$\}$, 
\item $(\Gamma_1, \Delta_1) \lessdot (\Gamma_2, \Delta_2)$ if and only if $(\Gamma_1, \Delta_1)$ and $(\Gamma_2, \Delta_2)$ satisfy the following three conditions:
\begin{enumerate}
\item $\exists C \, \bigl((\Box C \in \Gamma_2$, and $\Box C \notin \Gamma_1)$ or $(\Box C \in \Delta_2$ and $\Box C \notin \Delta_1)\bigr)$.
\item $\forall \Box C \in \Phi(A) \, (\Box C \in \Gamma_1 \Longrightarrow C, \Box C \in \Gamma_2)$.
\item $\forall \Box C \in \Phi(\neg B) \, (\Box C \in \Delta_1 \Longrightarrow C, \Box C \in \Delta_2)$.
\end{enumerate}
\end{itemize}

Note that $W'$ is a finite set. 
It is easily shown that $\lessdot$ is transitive and irreflexive. 

For each $\LT$-formula $C$, let 
\[
    \mu(C):= \{ D \mid  D \in S^+ (C)\} \cup \{ \neg D \mid D \in S^- (C)\}.
\]
The set $\mu(C)$ is almost same as $S^+(C)$, but for $\bb D \in S^-(C)$, even if $\neg \bb D \notin S^+(C)$, we have $\neg \bb D \in \mu(C)$. 
Next, we define the frame $(W, \acc, \{\prec_{D}\}_{D \in \MFT})$ as follows:

\begin{itemize}
\item $W := \{ \seq{a_0, \dots ,a_l} \mid l \geq 0$, $a_0, \ldots ,a_l \in W'$ and $\forall i < l(a_i \lessdot a_{i+1})\}$.

\item $\seq{a_0, \ldots, a_l} \acc \seq{b_0, \ldots, b_m} :\iff$ $\seq{b_0, \ldots, b_m}$ is a proper end-extension of $\seq{a_0, \ldots, a_l}$.
\item For each $D \in \MFT$, $\seq{a_0, \ldots, a_l} \prec_{D} \seq{b_0, \ldots ,b_m}$ if and only if one of the following conditions holds:
\begin{enumerate}
\item $\Box \bot \notin (a_l)_0 \cup (a_l)_1$ and $\seq{a_0, \ldots, a_l} \acc \seq{b_0, \ldots ,b_m}$.
\item $\Box \bot \in (a_l)_0 \cup (a_l)_1$, $l \leq m$, $\forall i < l (a_i = b_i)$, and 
\begin{align*}
    & \bigl[\bb D \notin \bigl((a_l)_0 \cap \mu(A) \bigr) \cup \bigl((a_l)_1 \cap \mu(\neg B)\bigr)\\ 
    & \quad \text{or}\ D \in \bigl( (b_m)_0 \cap \mu(A) \bigr) \cup \bigl((b_m)_1 \cap \mu(\neg B)\bigr) \bigr].
\end{align*}
\end{enumerate}
\end{itemize}
The set $W$ consists of all finite $\lessdot$-chains of elements of $W'$. 
Since $W'$ is finite, so is $W$.
If $l>0$, then `$l \leq m$ and $\forall i < l (a_i = b_i)$' mean that $\seq{b_0, \ldots, b_m}$ is a proper end-extension of $\seq{a_0, \ldots, a_{l-1}}$.
In this proof, we use $s_i$ for a finite sequence of $W'$. Also, elements of $W'$ are denoted by $a_i$, $b_i$ and $c_i$. Notice that $a_i \in W'$ is a pair $(\Gamma_i, \Delta_i)$ of finite sets, and so $(a_i)_0$ and $(a_i)_1$ denote $\Gamma_i$ and $\Delta_i$, respectively.

\begin{lem} \label{lem1}
The frame $(W, \acc, \{ \prec_D \}_{D \in \MFT})$ is a finite $\GRC$-frame. 
\end{lem}
\begin{proof}
It is easy to see that $\acc$ is transitive and irreflexive by the definition. 
Let $D$ be any $\LT$-formula.

\medskip
(i) $s_0 \acc s_1 \Longrightarrow s_0 \prec_D s_1$. \\
Let $\seq{a_0, \ldots ,a_l} \acc \seq{b_0, \ldots, b_m}$. Since $a_l \lessdot b_m$ and $\bot \notin (b_m)_0 \cup (b_m)_1$, we obtain $\Box \bot \notin (a_l)_0 \cup (a_l)_1$. 
By the definition of $\prec_D$, we get $\seq{a_0, \ldots, a_l} \prec_D \seq{b_0, \ldots ,b_m}$.

\medskip
(ii) $s_0 \acc s_1$ and $s_1 \prec_D s_2 \Longrightarrow s_0 \acc s_2$.\\
Assume $\seq{a_0, \ldots, a_l} \acc \seq{b_0, \ldots, b_m}$ and $\seq{b_0, \ldots, b_m} \prec_D \seq{c_0, \ldots, c_n}$.
If $\Box \bot \notin (b_m)_0 \cup (b_m)_1$, then we have $\seq{b_0, \ldots, b_m} \acc \seq{c_0, \ldots, c_n}$. Thus, we obtain $\seq{a_0, \ldots, a_l} \acc \seq{c_0, \ldots, c_n}$. If $\Box \bot \in (b_m)_0 \cup (b_m)_1$, then we have $n \geq m$ and $\forall i < m (b_i = c_i)$. Since $l < m$ and $\forall i \leq l (a_i = b_i)$, we obtain $l<n$ and $\forall i \leq l (a_i = c_i)$. Hence, $\seq{a_0, \ldots, a_l} \acc \seq{c_0, \ldots, c_n}$.

\medskip
(iii) $s_0 \prec_D s_1$ and $s_0 \acc s_2 \Longrightarrow s_0 \acc s_1$. \\
Suppose $\seq{a_0, \ldots, a_l} \prec_D \seq{b_0, \ldots, b_m}$ and $\seq{a_0, \ldots, a_l} \acc \seq{c_0, \ldots, c_n}$. 
Since $a_l \lessdot c_n$ and $\bot \notin (c_n)_0 \cup (c_n)_1$, we obtain $\Box \bot \notin (a_l)_0 \cup (a_l)_1$. 
We get $\seq{a_0, \ldots, a_l} \acc \seq{b_0, \ldots, b_m}$.

\medskip
(iv) $s_0 \acc s_1 \Longrightarrow \exists s_2(s_0 \acc s_2$ and $s_1 \prec_D s_2)$. \\
If $\seq{a_0, \ldots, a_l} \acc \seq{b_0, \ldots, b_m}$, then we obtain $l < m$ and $\forall i \leq l(a_i = b_i)$. We distinguish the following two cases:

\paragraph{Case 1:} $\Box \bot \notin (b_m)_0 \cup (b_m)_1$. \\
Then, $\neg \Box \bot \in (b_m)_0 \cup (b_m)_1$.
Since $b_m$ is $(A, \neg B)$-maximally inseparable, we have $\neg \Box \bot \in (b_m)_0$. 
Let 
\begin{itemize}
    \item $\Gamma_1 = \{ \Box \bot \} \cup \{ \Box C, C \mid \Box C \in (b_m)_0\}$, 
    \item $\Delta_1 =\{ \Box D, D \mid \Box D \in (b_m)_1 \}$.
\end{itemize}
Towards a contradiction, assume that $(\Gamma_1, \Delta_1)$ is separable. 
Then, there exists an $\LT$-formula $E$ satisfying $(\ddag)$ and the following conditions: 
\begin{itemize}
    \item $\GRC \vdash \bigwedge_{\Box C \in (b_m)_0} \boxdot C \wedge \Box \bot \to E$, 
    \item $\GRC  \vdash \bigwedge_{\Box D \in (b_m)_1} \boxdot D \to \neg E$, 
    \item $\tau(E) \subseteq \tau(A) \cap \tau(B)$, 
\end{itemize}
where $\boxdot C$ denotes $\Box C \wedge C$.
By propositional logic, 
\[
    \displaystyle \GRC \vdash \bigwedge_{\Box C \in (b_m)_0} \boxdot C \to (\neg E \to (\Box \bot \to \bot)).
\]
Then,  
\[
    \displaystyle \GRC  \vdash \bigwedge_{\Box C \in(b_m)_0} \Box \boxdot C \to \bigl(\Box \neg E \to \Box(\Box \bot \to \bot) \bigr).
\]
Since $\GL \vdash \Box(\Box \bot \to \bot) \to \Box \bot$ and $\GL \vdash \Box C \to \Box \Box C$,
we obtain 
\[
    \displaystyle \GRC  \vdash \bigwedge_{\Box C \in (b_m)_0} \Box C \wedge \neg \Box \bot \to \Diamond E.
\]
On the other hand, $\displaystyle \GRC \vdash \bigwedge_{\Box D \in (b_m)_1} \Box D \to \Box \neg E$.
Since $\tau(\Diamond E)=\tau(E) \subseteq \tau(A) \cap \tau(B)$ and $\Diamond E$ satisfies $(\ddag)$, this contradicts the inseparability of $b_m$.

Hence, $(\Gamma_1, \Delta_1)$ is inseparable and it can be extended to some $d \in W'$. 
By the definition of $(\Gamma_1, \Delta_1)$, $b_m \lessdot d$ holds. 
Hence, we obtain $\seq{b_0, \ldots, b_m} \acc \seq{b_0, \ldots, b_m, d}$. 
Therefore,  $\seq{a_0, \ldots, a_l} \acc \seq{b_0, \ldots, b_m, d}$ and
$\seq{b_0, \ldots, b_m} \prec_{D} \seq{b_0, \ldots, b_m, d}$ hold.

\paragraph{Case 2:} $\Box \bot \in (b_m)_0 \cup (b_m)_1$. \\
Since $l<m$, we have $m \geq 1$.
Suppose $\bb D \in  \bigl((b_m)_0 \cap \mu(A) \bigr) \cup \bigl((b_m)_1 \cap \mu(\neg B) \bigr)$. 
We only consider the case that $\bb D \in  (b_m)_0 \cap \mu(A)$.
The case that $\bb D \in 
(b_m)_1 \cap \mu(\neg B)$
is proved similarly.
Let us find some $d \in W'$ such that $D \in (d)_0$ and $\seq{b_0, \dots, b_m} \prec_D \seq{b_0, \ldots, b_{m-1}, d}$.

Since $\bb D \in (b_m)_0 \subseteq \Phi(A)$, we have $\Box \neg D, \Box \neg \bb D \in \Phi(A)$.
Here, we prove $\Diamond D \in (b_{m-1})_0$.
Assume otherwise, then we have $\Box \neg D \in (b_{m-1})_0$. 
Hence, by $\GRC \vdash \Box \neg D \to \Box \neg \bb D$, it follows that $\Box \neg \bb D \in (b_{m-1})_0$. Because $b_{m-1} \lessdot b_m$, we have $\neg \bb D \in (b_m)_0$.
This contradicts $\bot \notin (b_m)_0$. 
We obtain $\Diamond D \in (b_{m-1})_0$.

Next, let 
\begin{itemize}
    \item $\Gamma_2 = \{ \Box \neg D, D\} \cup \{ \Box C, C \mid \Box C \in (b_{m-1})_0 \}$, 
    \item $\Delta_2 = \{ \Box E, E \mid \Box E \in (b_{m-1})_1\}$. 
\end{itemize}
As in Case 1, we can prove that $(\Gamma_2, \Delta_2)$ is inseparable.
We find some $d \in W'$ which is an extension of $(\Gamma_2, \Delta_2)$.
By the definition of $(\Gamma_2, \Delta_2)$, we obtain $b_{m-1} \lessdot d$ and $\seq{b_0, \ldots, b_{m-1}, d} \in W$.
Since $\Box \bot \in (b_m)_0 \cup (b_m)_1$ and $D \in (d)_0 \cap \mu(A)$, we have $\seq{b_0, \ldots, b_m} \prec_D \seq{b_0, \ldots, b_{m-1}, d}$. 
Also, since $l \leq m-1$, we obtain $\seq{a_0, \ldots ,a_l} \acc \seq{b_0, \ldots, b_{m-1}, d}$. 
\end{proof}

For each propositional variable $p$ and $\seq{a_0, \ldots, a_l} \in W$, we define
\begin{equation*}
\seq{a_0, \ldots,a_l} \Vdash p :\iff p \in \bigl((a_l)_0 \cap \mu(A)\bigr) \cup \bigl((a_l)_1 \cap \mu(\neg B)\bigr).
\end{equation*}

\begin{lem} \label{lem2}
For any $\LT$-formula $C$ and
$\seq{a_0, \ldots, a_l} \in W$, \\
if $C \in \bigl((a_l)_0 \cap \mu(A)\bigr) \cup \bigl((a_l)_1 \cap \mu(\neg B)\bigr)$, then $\seq{a_0, \ldots, a_l} \Vdash C$.
\end{lem}
\begin{proof}
Let $C$ be an $\LT$-formula and $\seq{a_0, \ldots, a_l} \in W$. 
We prove the lemma by induction on the number of occurrences of the operators $\neg, \vee, \Box$, and $\bb$ in $D$.
Suppose $C \in ((a_l)_0 \cap \mu(A)) \cup ((a_l)_1 \cap \mu(\neg B))$.
We only consider the case that $C \in (a_l)_0 \cap \mu(A)$. 
The case that $C \in (a_l)_1 \cup \mu(\neg B)$ is proved in the similar way. 

\medskip
$C \equiv p$ or $\bot$: This case is obvious by the definition and $\bot \notin (a_l)_0$. 

\medskip
$C \equiv D_1 \vee D_2$ or $\Box D$: Obviously the claim holds by the induction hypothesis.

\medskip
$C \equiv \bb D$: 
Suppose $\bb D \in (a_l)_0 \cap \mu(A)$.
Take $\seq{b_0, \ldots, b_m} \in W$ with $\seq{a_0, \ldots, a_l} \prec_D \seq{b_0, \ldots, b_m}$ arbitrarily.
Since $\bb D \in (a_l)_0 \subseteq \Phi(A)$, we have $\Box D \in \Phi(A)$ and $\Box D \in (a_l)_0$ by $\GRC \vdash \bb D \to \Box D$.
If $\Box \bot \notin (a_l)_0 \cup (a_l)_1$, $\seq{a_0, \ldots, a_l} \acc \seq{b_0, \ldots, b_m}$ holds. Since $a_l \lessdot b_m$, we have $D \in (b_m)_0$. Since $D \in \mu (A)$, by the induction hypothesis, we obtain $\seq{b_0, \ldots,b_m} \Vdash D$.
If $\Box \bot \in (a_l)_0 \cup (a_l)_1$, since $\seq{a_0, \ldots, a_l} \prec_D \seq{b_0, \ldots, b_m}$, we have $D \in (b_m)_0 \cap \mu (A)$ or $D \in (b_m)_1 \cap \mu (\neg B)$. By the induction hypothesis, in both cases, it follows $\seq{b_0, \ldots, b_m} \Vdash D$.
Consequently, we obtain $\seq{a_0, \ldots, a_l} \Vdash \bb D$. 

\medskip
$C \equiv \neg D'$: We distinguish the following four cases.

\medskip
(i) $D' \equiv  p$ : 
Suppose $\neg p \in (a_l)_0  \cap \mu(A)$.
Towards a contradiction, assume $\seq{a_0, \ldots, a_l} \Vdash p$. 
Then, we obtain $p \in (a_l)_0  \cap \mu(A)$ or $p \in (a_l)_1 \cap \mu(\neg B)$. 
In the former case, since $\neg p \in (a_l)_0$, we obtain $\GRC \vdash  \bigwedge (a_l)_0 \to \bot$. 
This contradicts the inseparability of $a_l$. 

In the latter case, it follows $\GRC \vdash \bigwedge (a_l)_1 \to p$. 
By the supposition, $\GRC \vdash \bigwedge (a_l)_0 \to \neg p$. 
Here, $\tau(\neg p) = \{\neg p\}$. 
Since $\neg p \in \mu(A)$, we have $p \in S^-(A)$, and hence $\neg p \in \tau(A)$. 
Also, $p \in \mu(\neg B)$ implies $p \in S^+(\neg B)$, and thus we get $\neg p \in \tau(B)$. 
Therefore, $\tau(\neg p) \subseteq \tau(A) \cap \tau(B)$. 
Since $\tau(\neg p)$ is $\bb$-free, $\neg p$ satisfies $(\ddag)$. 
Therefore, this contradicts the inseparability of $a_l$. 
Hence, we conclude $\seq{a_0, \ldots, a_l} \Vdash \neg p$.

\medskip
(ii) $D' \equiv  D_1 \vee D_2$ : We can easily obtain the lemma by the induction hypothesis.

\medskip
(iii) $D' \equiv  \Box D$ : Suppose $\neg \Box D \in (a_l)_0 \cap \mu(A) $.
Let 
\begin{itemize}
    \item $\Gamma_1 := \{ \neg D, \Box D\} \cup \{ \Box E, E \mid \Box E \in (a_l)_0 \}$, 
    \item $\Delta_1 := \{ \Box F, F \mid \Box F \in (a_l)_1\}$. 
\end{itemize}
In the similar argument as in Case 1 of the proof of Lemma \ref{lem1}, we can prove that $(\Gamma_1, \Delta_1)$ is inseparable.
Let $d \in W'$ be an extension of $(\Gamma_1, \Delta_1)$. 
We obtain $a_l \lessdot d$ and $\seq{a_0, \ldots, a_l, d} \in W$.
Since $\neg D \in  (d)_0 \cap \mu(A)$, by the induction hypothesis, we obtain $\seq{a_0, \ldots, a_l, d} \Vdash \neg D$. Since 
$\seq{a_0, \ldots, a_l} \acc \seq{a_0, \ldots, a_l, d}$,
we conclude that $\seq{a_0, \ldots, a_l} \Vdash \neg \Box D$. 

\medskip
(iv) $D' \equiv  \bb D$ : 
Suppose $\neg \bb D \in   (a_l)_0 \cap \mu(A) $. We distinguish the following two cases.

\paragraph{Case 1:} $\Box \bot \notin (a_l)_0 \cup (a_l)_1$. \\
Then, $\neg \Box \bot \in (a_l)_0 \cup (a_l)_1$. 
Since $\neg \bb D \in (a_l)_0$, we have $\Box D \in \Phi(A)$. 
It follows from $\GRC \vdash \Box D \to (\Box \bot \vee \bb D)$ that $\neg \Box D \in (a_l)_0$. 
Let 
\begin{itemize}
    \item $\Gamma_2 := \{ \neg D, \Box D\} \cup \{ \Box E, E \mid \Box E \in (a_l)_0 \}$, 
    \item $\Delta_2 := \{ \Box F, F \mid \Box F \in (a_l)_1\}$. 
\end{itemize}
In the similar argument as in the case that $D' \equiv \Box D$, we can prove that $(\Gamma_2, \Delta_2)$ is inseparable.
Let $d \in W'$ be an extension of $(\Gamma_2, \Delta_2)$. 
We get $a_l \lessdot d$ and $\seq{a_0, \ldots, a_l, d} \in W$.
Since $\neg D \in  (d)_0 \cap \mu(A)$, by the induction hypothesis, we obtain $\seq{a_0, \ldots, a_l, d} \Vdash \neg D$. 
Because $\Box \bot \notin (a_l)_0 \cup (a_l)_1$ and $\seq{a_0, \ldots, a_l} \acc \seq{a_0, \ldots, a_l, d}$, we have $\seq{a_0, \ldots, a_l} \prec_D \seq{a_0, \ldots, a_l, d}$. Hence, 
we conclude that $\seq{a_0, \ldots, a_l} \Vdash \neg \bb D$.

\paragraph{Case 2:} $\Box \bot \in (a_l)_0 \cup (a_l)_1$. \\
If $\bb D \in (a_l)_1 \cap \mu(\neg B)$, then $\bb D \in S^-(A) \cap S^-(B)$ because $\neg \bb D \in   \mu(A)$, and so we would have $\tau(\neg \bb D) \subseteq \tau(A) \cap \tau(B)$ and $\neg \bb D$ would satisfy $(\ddag)$. 
This contradicts the inseparability of $a_l$. 
Thus, we have 
\begin{align}\label{notin}
    \bb D \notin \bigl((a_l)_0 \cap \mu(A) \bigr) \cup \bigl((a_l)_1 \cap \mu(\neg B) \bigr).
\end{align}
We distinguish the following two cases.

\paragraph{Case 2-1:} $l \geq 1$. \\
Since $\bb D \notin (a_{l})_0$, we have $\Box \bb D \notin (a_{l-1})_0$.
 By $\GRC \vdash \Box D \to \Box \bb D$, we obtain $\Box D \notin (a_{l-1})_0$. 
Let
\begin{itemize}
    \item $\Gamma_3 := \{ \Box D, \neg D \} \cup \{ \Box E,  E \mid \Box E \in (a_{l-1})_0 \}$,
    \item $\Delta_3 := \{ \Box F, F \mid \Box F \in (a_{l-1})_1 \}$. 
\end{itemize}
By the similar argument as in the proof of Lemma \ref{lem1}, we obtain that $(\Gamma_3, \Delta_3)$ is inseparable.
Let $d \in W$ be an extension of $(\Gamma_3, \Delta_3)$.  We obtain $a_{l-1} \lessdot d$ and $\seq{a_0, \ldots, a_{l-1}, d} \in W$.
Since $\neg D \in   (d)_0 \cap \mu(A)$, by the induction hypothesis, we obtain $\seq{a_0, \ldots, a_{l-1}, d} \Vdash \neg D$. 
By (\ref{notin}), we have $\seq{a_0, \ldots, a_l} \prec_D \seq{a_0, \ldots, a_{l-1}, d}$ by the definition of $\prec_D$. 
We conclude that $\seq{a_0, \ldots, a_l} \Vdash \neg \bb D$. 

\paragraph{Case 2-2:} $l=0$. \\
Suppose that $(\{\neg D\}, \emptyset)$ is separable. 
Then, we would have $\GRC \vdash D$ and $\GRC \vdash \bb D$. 
This contradicts the inseparability of $a_l$ because $\neg \bb D \in (a_l)_0$.  
Hence, $(\{ \neg D\}, \emptyset)$ is inseparable. 
Let $d \in W'$ be an extension of $(\{\neg D\} , \emptyset)$. 
Then, we obtain $\seq{a_l} \prec_D \seq{d}$ by (\ref{notin}). 
Since $\neg D \in (d)_0 \cap \mu(A)$, by the induction hypothesis, we obtain $\seq{d} \Vdash \neg D$. 
Therefore, $\seq{a_l} \Vdash \neg \bb D$.
\end{proof}

We resume the proof of Theorem \ref{LIP_GRC}. Since $A \to B$ has no Lyndon interpolant in $\GRC$ with $(\ddag)$, the pair $(\{A\}, \{ \neg B\})$ is inseparable. 
Let $d \in W'$ be an extension of $(\{A\}, \{ \neg B\})$, then $\seq{d} \in W$. 
Since $A \in (d)_0 \cap \mu(A)$ and $\neg B \in (d)_1 \cap \mu(\neg B)$, by Lemma \ref{lem2}, we obtain $\seq{d} \Vdash A \wedge \neg B$. 
By Proposition \ref{soundness_GRC}, we have $\GRC \nvdash A \to B$, a contradiction. 
\end{proof}

\begin{thm}[The finite frame property of $\GRC$]\label{ffp_GRC}
For any $\LT$-formula $A$, the following are equivalent: 
\begin{enumerate}
    \item $\GRC \vdash A$. 
    \item $A$ is valid on all $\GRC$-frames. 
    \item $A$ is valid on all finite $\GRC$-frames. 
\end{enumerate}
\end{thm}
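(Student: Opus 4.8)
The plan is to prove the cycle $(1) \Rightarrow (2) \Rightarrow (3) \Rightarrow (1)$. The implication $(1) \Rightarrow (2)$ is exactly Proposition~\ref{soundness_GRC}, which we may cite directly, and $(2) \Rightarrow (3)$ is trivial since every finite $\GRC$-frame is a $\GRC$-frame. So the whole content is in $(3) \Rightarrow (1)$, which I would prove by contraposition: assuming $\GRC \nvdash A$, I must produce a finite $\GRC$-frame on which $A$ fails.

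The key observation is that the machinery developed in the proof of Theorem~\ref{LIP_GRC} already gives this, essentially for free, once we make the right choice of the two formulas feeding the construction. First, I would record that it suffices to refute $\neg A$: if $\GRC \nvdash A$, then $\GRC \vdash \neg A \to A$ is false, so $\neg A \to A$ has no Lyndon interpolant with $(\ddag)$ in $\GRC$ --- indeed, any such interpolant $C$ would have $\tau(C) \subseteq \tau(\neg A) \cap \tau(A)$ and satisfy $\GRC \vdash \neg A \to C$ and $\GRC \vdash C \to A$, whence $\GRC \vdash \neg A \to A$ and so $\GRC \vdash A$, a contradiction. (One should double-check the trivial case $\neg A \to A$ versus the intended template $A' \to B'$; here we take $A' := \neg A$ and $B' := A$, so that $A' \to B'$ is $\neg A \to A$, and "no Lyndon interpolant with $(\ddag)$" holds precisely when $\GRC \nvdash A$.) Then I would run the construction in the proof of Theorem~\ref{LIP_GRC} verbatim with this choice of $A'$ and $B'$ in place of $A$ and $\neg B$: form $\Phi(\neg A)$ and $\Phi(A)$, the set $W'$ of maximally inseparable pairs, the relation $\lessdot$, and the finite frame $(W, \acc, \{\prec_D\}_{D \in \MFT})$ with its valuation. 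By Lemma~\ref{lem1} this is a finite $\GRC$-frame, and by Lemma~\ref{lem2}, taking $d \in W'$ extending the inseparable pair $(\{\neg A\}, \{A\})$, we get $\seq{d} \in W$ with $\seq{d} \Vdash \neg A \wedge \neg \neg A$ --- more directly, $\seq{d} \Vdash \neg A$ since $\neg A \in (d)_0 \cap \mu(\neg A)$ --- so $A$ is refuted at $\seq{d}$ on a finite $\GRC$-frame. This completes $(3) \Rightarrow (1)$.

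The only genuine subtlety, and the step I expect to need the most care, is the bookkeeping around the roles of the two formulas: in Theorem~\ref{LIP_GRC} the construction is driven by a pair $(A, B)$ with $\GRC \vdash A \to B$, and the relevant "side formulas" are $A$ and $\neg B$, with $\mu(A)$ and $\mu(\neg B)$ controlling the valuation. Here we instead start from a single $A$ with $\GRC \nvdash A$; I would phrase this cleanly by applying the construction to the pair $(\neg A, A)$ --- which is formally a pair $(A', B')$ with the (vacuously relevant) hypothesis replaced by "$\neg A \to A$ has no $(\ddag)$-interpolant" --- so that the two side formulas become $\neg A$ and $\neg A$ and the single pair $(\{\neg A\},\{\neg A\})$ is inseparable. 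One must check that the proof of Theorem~\ref{LIP_GRC} never actually uses $\GRC \vdash A \to B$ except to derive the inseparability of $(\{A\},\{\neg B\})$; inspecting that proof, the derivability hypothesis is used only at the very end to reach the contradiction, not in Lemmas~\ref{lem1} and~\ref{lem2}, so the frame construction and the truth lemma go through unchanged. Hence the finite-frame-property proof reduces to invoking the already-established Lemmas~\ref{lem1} and~\ref{lem2} with the bookkeeping substitution above, plus the two easy implications $(1)\Rightarrow(2)\Rightarrow(3)$.
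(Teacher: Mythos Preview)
Your proposal is correct and takes essentially the same approach as the paper: both extract a finite countermodel directly from the model construction in the proof of Theorem~\ref{LIP_GRC}. The paper is terser, simply observing that $(\{\neg A\}, \emptyset)$ is inseparable and invoking that construction; note that in your write-up the pair ``$(\{\neg A\}, \{A\})$'' and the conclusion ``$\seq{d} \Vdash \neg A \wedge \neg\neg A$'' are slips---as you correctly record in your final paragraph, with $(A',B')=(\neg A, A)$ the starting pair is $(\{\neg A\},\{\neg A\})$, and Lemma~\ref{lem2} then gives $\seq{d}\Vdash \neg A$.
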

\begin{proof}
$(1 \Rightarrow 2)$ and $(2 \Rightarrow 3)$ are obvious. 
We prove the contrapositive of $(3 \Rightarrow 1)$. 
Suppose $\GRC \nvdash A$. 
 Then, $(\{ \neg A\}, \emptyset)$ is inseparable.
Thus, by our proof of Theorem \ref{LIP_GRC}, there exists some finite $\GRC$-model in which $A$ is not valid.
\end{proof}

\begin{cor}[LIP for $\GL$ (Shamkanov {\cite[Theorem 2.10]{Shamkanov}})]\label{LIP_GL}
The logic $\GL$ enjoys Lyndon interpolation property. 
\end{cor}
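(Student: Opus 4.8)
The plan is to obtain LIP for $\GL$ as a direct corollary of the \emph{strengthened} form of Theorem \ref{LIP_GRC}; the extra clause $(\ddag)$ is exactly the leverage that makes this work. Suppose $\GL \vdash A \to B$ for $\LBox$-formulas $A$ and $B$. First I would note that every $\GL$-proof is already a $\GRM$-proof, hence a $\GRC$-proof: axioms (1)--(3) of $\GRM$ together with modus ponens and $\Box$-necessitation subsume all axioms and rules of $\GL$. Thus $\GRC \vdash A \to B$.

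Next, apply Theorem \ref{LIP_GRC} to obtain a Lyndon interpolant $C$ of $A \to B$ in $\GRC$ satisfying $(\ddag)$; that is, $v^*(C) \subseteq v^*(A) \cap v^*(B)$ for $* \in \{+,-\}$, $\GRC \vdash A \to C$, $\GRC \vdash C \to B$, and for every $* \in \{+,-\}$ and $D \in \MFT$, if $\bb D \in S^*(C)$ then $\bb D \in S^*(A) \cap S^*(B)$. Since $A$ and $B$ are $\LBox$-formulas, neither $S^*(A)$ nor $S^*(B)$ contains any formula of the shape $\bb D$; hence $(\ddag)$ forces $C$ to contain no subformula of the form $\bb D$ either, so $C$ is itself an $\LBox$-formula.

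Finally, since $\GRC$ is a subsystem of $\GR$ (the rule $\dfrac{A}{\bb A}$ being admissible in $\GR$), we get $\GR \vdash A \to C$ and $\GR \vdash C \to B$. As $A \to C$ and $C \to B$ are $\LBox$-formulas, Fact \ref{conservation_GR} yields $\GL \vdash A \to C$ and $\GL \vdash C \to B$. Together with $v^*(C) \subseteq v^*(A) \cap v^*(B)$ for $* \in \{+,-\}$, this exhibits $C$ as a Lyndon interpolant of $A \to B$ in $\GL$.

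There is essentially no obstacle beyond having proved Theorem \ref{LIP_GRC} in the sharpened form with $(\ddag)$: the only point needing a word of care is that in the present situation $(\ddag)$ collapses to ``$C$ is $\bb$-free'', which is immediate from the absence of $\bb$ in $A$ and $B$. Without $(\ddag)$ one could not conclude that the $\GRC$-interpolant lands in $\LBox$, and the conservativity of $\GR$ over $\GL$ alone would not suffice; so controlling the $\bb$-subformulas of the interpolant is precisely what drives the argument.
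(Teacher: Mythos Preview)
Your proof is correct and follows essentially the same route as the paper's: lift $A \to B$ from $\GL$ to $\GRC$, apply Theorem \ref{LIP_GRC} with $(\ddag)$ to conclude the interpolant $C$ is $\bb$-free, and then descend back to $\GL$ via Fact \ref{conservation_GR}. The paper is slightly terser (it does not spell out the passage through $\GR$), but the argument is the same.
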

\begin{proof}
Suppose $\GL \vdash A \to B$ for $A, B \in \LBox$. 
We obtain $\GRC \vdash A \to B$.
By Theorem \ref{LIP_GRC}, $A \to B$ has a Lyndon interpolant $C$ with the condition $(\ddag)$. 
That is, $\GRC \vdash A \to C$, $\GRC \vdash C \to B$, $\tau(C) \subseteq \tau(A) \cap \tau(B)$, and every $\bb D \in S(C)$ is contained in $S(A) \cup S(B)$. 
Since $A$ and $B$ are $\bb$-free, so is $C$. 
Thus, by Fact \ref{conservation_GR}, we obtain $\GL \vdash A \to C$, $\GL \vdash C \to B$ and $\tau(C) \subseteq \tau(A) \cap \tau(B)$.
\end{proof}


\section{Lyndon interpolation property for $\GR$}\label{sec_LIP_GR}

The purpose of this section is to prove Lyndon interpolation property for $\GR$. 
Besides, we introduce the logic $\GRB$ and show that $\GRB$ provides an alternative axiomatization of $\GR$.
To do so, we first prove LIP of $\GRB$ and then prove the equivalence of $\GRB$ and $\GR$.
Moreover, we obtain the finite frame property of $\GR$ with respect to non-trivial $\GR$-frames. 

\begin{defn}
    The logic $\GRB$ is obtained from $\GRC$ by adding the Rosser rule $\dfrac{\neg A}{\neg \bb A}$. 
\end{defn}

Before proving LIP for $\GRB$, we prepare some observation on $\GRC$-frames.  

\begin{defn}\leavevmode
\begin{itemize}
    \item A $\GRC$-frame $(W, \acc, \{\prec_B\}_{B \in \MFT})$ is said to be \textit{non-trivial} iff there exist $r, w \in W$ such that $r$ is the root with respect to $\acc$ and $r \neq w$. 
    
    \item We say that a $\GRC$-frame $(W, \acc, \{\prec_B\}_{B \in \MFT})$ is \textit{$\bb$-serial} iff for any $\LT$-formula $B$ and $x \in W$, there exists a $y \in W$ such that $x \prec_B y$. 
\end{itemize}
\end{defn}

\begin{prop}\label{bb-serial}\leavevmode
\begin{enumerate}
    \item For any $\LT$-formula $A$, if $\GRB \vdash A$, then $A$ is valid on all $\bb$-serial $\GRC$-frames. 
    \item Every non-trivial $\GRC$-frame is $\bb$-serial. 
\end{enumerate}
\end{prop}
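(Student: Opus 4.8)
The plan is to prove the two clauses separately, both by essentially routine verifications, with the interesting content concentrated in clause~2.

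\medskip
\textbf{Clause 1.} First I would recall that $\GRB$ is obtained from $\GRC$ by adding the Rosser rule $\dfrac{\neg A}{\neg \bb A}$, and that $\GRC$ is sound with respect to all $\GRC$-frames by Proposition~\ref{soundness_GRC}. So it suffices to check that the Rosser rule preserves validity on $\bb$-serial $\GRC$-frames: if $\neg A$ is valid on such a frame, then so is $\neg \bb A$. Fix a $\bb$-serial $\GRC$-frame and any valuation, and suppose $\neg A$ is valid, i.e.\ $x \nVdash A$ for every $x \in W$. Take any $w \in W$; by $\bb$-seriality there is some $y \in W$ with $w \prec_A y$, and since $y \nVdash A$, the truth condition $w \Vdash \bb A \iff \forall x\,(w \prec_A x \Rightarrow x \Vdash A)$ fails, so $w \nVdash \bb A$. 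Hence $\neg \bb A$ is valid on the frame. An induction on the length of proofs in $\GRB$ — the propositional axioms, the $\GRM$-axioms, modus ponens, $\Box$-necessitation and $\bb$-necessitation being handled exactly as in Proposition~\ref{soundness_GRC}, and the Rosser rule as just argued — then gives clause~1.

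\medskip
\textbf{Clause 2.} Let $(W, \acc, \{\prec_B\}_{B \in \MFT})$ be a non-trivial $\GRC$-frame, so there is a root $r$ and an element $w \neq r$. I must show that for every $\LT$-formula $B$ and every $x \in W$ there is a $y$ with $x \prec_B y$. The key observation is that $r \acc w$: since $r$ is the root, $r \acc w$ holds (this is what ``root'' means for the transitive, conversely well-founded relation $\acc$). Now fix $x \in W$ and a formula $B$. I would split into two cases according to whether $x$ has an $\acc$-successor. If there exists some $z$ with $x \acc z$, then the fourth $\GRC$-frame condition, $\forall a, b\,(a \acc b \Rightarrow \exists c\,(b \prec_B c\ \&\ a \acc c))$, is not directly what I want; instead I use the first condition $x \acc z \Rightarrow x \prec_B z$, giving $y = z$. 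The harder case is when $x$ is $\acc$-maximal (has no $\acc$-successor). Here I would use the third $\GRC$-frame condition, $\forall a,b,c\,(a \prec_B b\ \&\ a \acc c \Rightarrow a \acc b)$: contrapositively, if $x$ has no $\acc$-successor at all then $x$ has no $\prec_B$-successor either — which is the \emph{opposite} of what I want, so this case genuinely requires the non-triviality hypothesis to be exploited more cleverly, presumably by first showing every point of a non-trivial $\GRC$-frame \emph{does} have an $\acc$-successor, or by reducing to the root.

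\medskip
I expect the main obstacle to be exactly this last point: showing that in a non-trivial $\GRC$-frame no point is $\acc$-maximal, equivalently that the frame is ``$\acc$-serial'', so that the first frame condition $x \acc z \Rightarrow x \prec_B z$ can always be invoked. The likely route is: from $r \acc w$ and the second frame condition $\forall a,b,c\,(a \acc b\ \&\ b \prec_B c \Rightarrow a \acc c)$ together with the fourth condition, one propagates the existence of $\acc$-successors downward/upward through the frame — more precisely, given any $x$, one argues by cases on the position of $x$ relative to $r$ and $w$, using transitivity of $\acc$ and the interplay of the four conditions, to produce an $\acc$-successor of $x$; then $\prec_B$-seriality follows from the first condition. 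Once $\acc$-seriality is established the rest is immediate, so I would present clause~2 as: \emph{first} prove the auxiliary claim that a non-trivial $\GRC$-frame is $\acc$-serial, \emph{then} deduce $\bb$-seriality via condition one.
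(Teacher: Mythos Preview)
Your clause~1 is fine and matches the paper's (omitted) argument.

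Your clause~2 has a genuine gap. The auxiliary claim you aim for --- that a non-trivial $\GRC$-frame is $\acc$-serial --- is \emph{false}: $\acc$ is conversely well-founded, so $W$ always contains $\acc$-maximal elements. Any attempt to ``propagate the existence of $\acc$-successors'' to such a point will fail, and your case split on whether $x$ has an $\acc$-successor cannot be closed this way.

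The point you are missing is that the fourth frame condition \emph{is} exactly what you want, provided you instantiate it with the root in the first argument. For $x \neq r$ you have $r \acc x$ (that is what ``root'' means), and the fourth condition
\[
  r \acc x \ \Longrightarrow\ \exists z\,(x \prec_B z\ \&\ r \acc z)
\]
immediately yields a $\prec_B$-successor of $x$. For $x = r$, non-triviality gives some $w$ with $r \acc w$, and the first condition then gives $r \prec_B w$. That is the whole proof; no $\acc$-seriality is needed or available.
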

\begin{proof}
1. This is proved by induction on the length of proofs of $A$ in $\GRB$. 
We leave the proof to the reader. 

2. Let $(W, \acc, \{\prec_B\}_{B \in \MFT})$ be any non-trivial $\GRC$-frame with the root $r \in W$. 
    We show that $\prec_B$ is serial for all $B \in \MFT$. 
    Let $x$ be any element of $W$. 
    We distinguish the following two cases: 

    (i) $x = r$: Since the frame is non-trivial, we find an element $w \in W$ such that $r \acc w$. 
    Then, we have $r \prec_B w$ for any $B \in \MFT$.

    (ii) $x \neq r$: Since $r \acc x$ and the frame is a $\GRC$-frame, for any $B \in \MFT$, there exists a $y \in W$ such that $x \prec_B y$ and $r \acc y$. 
\end{proof}

We are ready to prove LIP for $\GRB$. 

\begin{thm}[LIP for $\GRB$]\label{LIP_GRB}
$\GRB$ enjoys Lyndon interpolation property. 
Moreover, for any $\LT$-formulas $A$ and $B$, if $\GRB \vdash A \to B$, then there exists a Lyndon interpolant $C$ of $A \to B$ in $\GRB$ satisfying the condition $(\ddag)$ in Theorem \ref{LIP_GRC}. 
\end{thm}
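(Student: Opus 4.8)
The plan is to mimic the proof of Theorem~\ref{LIP_GRC} almost verbatim, but to work with $\GRB$-inseparability in place of $\GRC$-inseparability and to build a \emph{non-trivial} (equivalently, by Proposition~\ref{bb-serial}, $\bb$-serial) $\GRC$-model witnessing a failure of interpolation. So suppose towards a contradiction that $\GRB\vdash A\to B$ but $A\to B$ has no Lyndon interpolant in $\GRB$ satisfying $(\ddag)$. I would define $\Phi(A)$, $\Phi(\neg B)$, the relation $\lessdot$, and the sets $W'$, $W$, $\acc$, and $\{\prec_D\}_{D\in\MFT}$ exactly as before, except that ``inseparable'' now means: there is no $\LT$-formula $C$ satisfying $(\ddag)$ with $\GRB\vdash\bigwedge\Gamma\to C$, $\GRB\vdash\bigwedge\Delta\to\neg C$, and $\tau(C)\subseteq\tau(A)\cap\tau(B)$. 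Since $\GRC\vdash X$ implies $\GRB\vdash X$, every $\GRB$-inseparable pair is $\GRC$-inseparable, so the verification that $(W,\acc,\{\prec_D\}_{D\in\MFT})$ is a finite $\GRC$-frame (Lemma~\ref{lem1}) and the truth lemma (Lemma~\ref{lem2}) go through \emph{unchanged}: every step there used only $\GRC$-provability of the relevant schemata and the $\GRC$-inseparability of the maximal pairs, both of which are still available.

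The one genuinely new point — and the step I expect to be the main obstacle — is to show that the resulting frame is non-trivial, so that Proposition~\ref{bb-serial}(1) can be invoked to conclude $\GRB\nvdash A\to B$. As in the final paragraph of the proof of Theorem~\ref{LIP_GRC}, the pair $(\{A\},\{\neg B\})$ is $\GRB$-inseparable, hence extends to some $d\in W'$ with $\seq{d}\in W$ and $\seq{d}\Vdash A\wedge\neg B$. What remains is to produce a further element above $\seq{d}$. Here I would use the Rosser rule: because $\GRB$ proves $\bb\neg\bot$ (it extends $\GRC$, which already proves $\bb\neg\bot$) but, crucially, the Rosser rule gives a handle forcing $\neg\bb\bot\in(d)_0$ — indeed, if $\bb\bot\in(d)_0$ were consistent with $\GRB$-inseparability one derives $\GRB\vdash\bigwedge(d)_0\to\bot$ from $\GRB\vdash\neg\bb\bot$ (which follows from the Rosser rule applied to $\GRB\vdash\neg\bot$), contradicting inseparability; similarly $\bb\bot\notin(d)_1$, using that $\neg\bb\bot$ is $\bb$-free and $\tau(\neg\bb\bot)=\emptyset\subseteq\tau(A)\cap\tau(B)$ with $(\ddag)$ trivially satisfied. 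Hence $\neg\bb\bot\in(d)_0\cup(d)_1$, which by maximal inseparability (as in Case~1 of Lemma~\ref{lem1}) lands in $(d)_0$, so $\Box\bot\notin(d)_0\cup(d)_1$ and $\neg\Box\bot\in(d)_0$. Running the Case~1 construction of Lemma~\ref{lem1} with $b_m:=d$ then yields some $d'\in W'$ with $d\lessdot d'$, so $\seq{d,d'}\in W$ and $\seq{d}\acc\seq{d,d'}$; thus the frame restricted to points $\acc$-above and including $\seq{d}$ is non-trivial.

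To package this cleanly I would first establish, as a small lemma, that every $(A,\neg B)$-maximally $\GRB$-inseparable pair $(\Gamma,\Delta)$ satisfies $\Box\bot\notin\Gamma\cup\Delta$ — equivalently $\neg\Box\bot\in\Gamma$ — using exactly the derivation sketched above together with $\GRB\vdash\neg\bb\bot$ and $\GRB\vdash\Box\bot\to\bb\bot$ (a $\GRM$-consequence, hence available). Given this, \emph{every} $w\in W'$ has a $\lessdot$-successor by the Case~1 argument of Lemma~\ref{lem1}, so $\acc$ has no $\acc$-maximal points and the frame $(W,\acc,\{\prec_D\}_{D\in\MFT})$ is non-trivial outright. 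Then the final contradiction is immediate: by Lemma~\ref{lem2} we get $\seq{d}\Vdash A\wedge\neg B$ in this non-trivial $\GRC$-frame, which by Proposition~\ref{bb-serial} is $\bb$-serial, so Proposition~\ref{bb-serial}(1) gives $\GRB\nvdash A\to B$, contradicting the hypothesis. Finally, for the Lyndon condition $v^*(C)\subseteq v^*(A)\cap v^*(B)$ and $(\ddag)$ themselves: these are exactly what ``inseparable'' was designed to track, so once $(\{A\},\{\neg B\})$ is shown \emph{separable} (the negation of the contradiction hypothesis), unwinding the definition hands back a formula $C$ with $\tau(C)\subseteq\tau(A)\cap\tau(B)$ — equivalently $v^*(C)\subseteq v^*(A)\cap v^*(B)$ for $*\in\{+,-\}$ — satisfying $(\ddag)$, with $\GRB\vdash A\to C$ and $\GRB\vdash C\to B$, which is precisely the assertion of the theorem.
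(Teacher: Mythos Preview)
Your approach has a genuine gap: the ``small lemma'' you propose --- that every $(A,\neg B)$-maximally $\GRB$-inseparable pair $(\Gamma,\Delta)$ satisfies $\Box\bot\notin\Gamma\cup\Delta$ --- is false, and so is the justification you offer for it. First, $\Box\bot\to\bb\bot$ is \emph{not} a $\GRM$-consequence: the one-world $\GRM$-model with empty $\acc$ and $r\nVdash\bb\bot$ validates all $\GRM$-axioms (since $\Box\bot$ makes axiom~6 trivial and $\Diamond$ is empty) yet refutes $\Box\bot\to\bb\bot$. Second, $\neg\bb\bot$ is not $\bb$-free, and $(\ddag)$ for $C=\neg\bb\bot$ demands $\bb\bot\in S^-(A)\cap S^-(B)$, which is certainly not automatic; nor is $\neg\bb\bot$ (or $\bb\bot$) guaranteed to lie in $\Phi(A)\cup\Phi(\neg B)$, so the maximality step ``hence $\neg\bb\bot\in(d)_0\cup(d)_1$'' is unsupported. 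In fact the lemma itself fails outright: since $\GRB\subseteq\GR$ and $\GR$ is conservative over $\GL$ (Fact~\ref{conservation_GR}), $\GRB\vdash\neg\Box\bot$ would force $\GL\vdash\neg\Box\bot$, which is false; hence $(\{\Box\bot\},\emptyset)$ is $\GRB$-inseparable (any separating $C$ would give $\GRB\vdash\neg\Box\bot$) and extends to a maximal pair with $\Box\bot\in\Gamma$. So $\lessdot$-maximal elements of $W'$ are unavoidable, and your frame need not be non-trivial.

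The paper's fix is different in kind: rather than trying to rule out $\Box\bot$-worlds, it \emph{adjoins the empty sequence $\epsilon$} to $W$ as a new root. This makes the frame non-trivial for free (since $\epsilon\acc\seq{d}$). The price is re-verifying the $\GRC$-frame conditions at $\epsilon$; the only genuinely new case is condition~(iv) when $y=\seq{a_0}$ with $\Box\bot\in(a_0)_0\cup(a_0)_1$ and $\bb D\in\bigl((a_0)_0\cap\mu(A)\bigr)\cup\bigl((a_0)_1\cap\mu(\neg B)\bigr)$. There the Rosser rule is applied to the \emph{specific} $D$: if $(\{D\},\emptyset)$ were separable one would get $\GRB\vdash\neg D$, hence $\GRB\vdash\neg\bb D$, contradicting inseparability of $a_0$. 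Extending $(\{D\},\emptyset)$ to some $d'\in W'$ then yields $\seq{a_0}\prec_D\seq{d'}$ with $\epsilon\acc\seq{d'}$. This is where the Rosser rule actually enters the argument --- not globally via $\neg\bb\bot$, but locally, formula by formula.
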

\begin{proof}
The proof of the theorem is essentially same as that of Theorem \ref{LIP_GRC}. 
So, we only give an outline of the proof. 

Towards a contradiction, suppose that there exist some $\LT$-formulas $A$ and $B$ such that $\GR \vdash A \to B$ and $A \to B$ has no Lyndon interpolant in $\GR$ with the condition $(\ddag)$.
We can define the notion of inseparability of pairs of finite sets of $\LT$-formulas as in the proof of Theorem \ref{LIP_GRC} based on $\GRB$ instead of $\GRC$. 
Let $W'$ be the set of all $(A, \neg B)$-maximally inseparable pairs. 
We can define the transitive and irreflexive binary relation $\lessdot$ on $W'$ as in the proof of Theorem \ref{LIP_GRC}. 
Then, we define the desired model $(W, \acc, \{\prec_D\}_{D \in \MFT}, \Vdash)$ also as in the proof of Theorem \ref{LIP_GRC}, except that $W$ in this proof contains the empty sequence $\epsilon$ as the root element. 
We can prove that for any $\LT$-formula $C$ and $\seq{a_0, \ldots, a_l} \in W \setminus \{\epsilon \}$, if $C \in \bigl((a_l)_0 \cap \mu(A)\bigr) \cup \bigl((a_l)_1 \cap \mu(\neg B)\bigr)$, then $\seq{a_0, \ldots, a_l} \Vdash C$ in the same way as in the proof of Lemma \ref{lem2}.
So, it suffices to prove that our frame is actually a $\GRC$-frame. 
This is proved in the almost same way as in the proof of Lemma \ref{lem1}, except that $\epsilon$ is considered. 
We prove only the following essential condition in the consideration of $\epsilon$: 
\begin{align}\label{cond}
    \forall D \in \MFT\, \forall y \in W\, \bigl(\epsilon \acc y \Rightarrow \exists z \in W\, (y \prec_D z\ \&\ \epsilon \acc z) \bigr). 
\end{align}
Suppose $\epsilon \acc \seq{a_0, \ldots, a_l}$. 
If $\Box \bot \notin (a_l)_0 \cup (a_l)_1$, then it is proved that there exists a $d \in W'$ such that $\seq{a_0, \ldots, a_l} \prec_D \seq{a_0, \ldots, a_{l}, d}$ (cf.~Case 1 in the proof of Lemma \ref{lem1}).
If $l \geq 1$ and $\Box \bot \in (a_l)_0 \cup (a_l)_1$, then it is shown that there exists a $d \in W'$ such that $\seq{a_0, \ldots, a_l} \prec_D \seq{a_0, \ldots, a_{l-1}, d}$ (cf.~Case 2 in the proof of Lemma \ref{lem1}).

So, we may assume that $l = 0$ and $\Box \bot \in (a_l)_0 \cup (a_l)_1$. 
Suppose $\bb D \in \Bigl( \bigl( (a_l)_0 \cap \mu(A) \bigr) \cup \bigl((a_l)_1 \cap \mu(\neg B) \bigr) \Bigr)$. 
We only give a proof of the case $\bb D \in (a_l)_0 \cap \mu(A)$, and the case $\bb D \in (a_l)_1 \cap \mu(\neg B)$ is proved similarly. 
Towards a contradiction, we assume that $(\{D\}, \emptyset)$ is separable. 
Then, $\GRB \vdash \neg D$, and so $\GRB \vdash \neg \bb D$. 
This contradicts the inseparability of $a_l$. 
Therefore, $(\{D\}, \emptyset)$ is inseparable. 
We find some $d \in W'$ which extends $(\{D\}, \emptyset)$. 
Since $D \in (d)_0 \cap \mu(A)$, we obtain $\seq{a_l} \prec_D \seq{d}$. 
This completes the proof of the condition (\ref{cond}). 

Since $A \to B$ has no Lyndon interpolant with $(\ddag)$ in $\GRB$, the pair $(\{A\}, \{ \neg B\})$ is inseparable. 
Let $d \in W'$ be an extension of $(\{A\}, \{ \neg B\})$. 
Then $\seq{d} \in W$ and so our model $(W, \acc, \{\prec_D\}_{D \in \MFT}, \Vdash)$ is a non-trivial $\GRC$-model. 
Also, we have $\seq{d} \Vdash A \wedge \neg B$. 
By Proposition \ref{bb-serial}.(2), our model is $\bb$-serial. 
So by Proposition \ref{bb-serial}.(1), every theorem of $\GRB$ is valid in the model. 
Thus, $\GRB \nvdash A \to B$, and this is a contradiction. 
\end{proof}


As a consequence of our proof of Theorem \ref{LIP_GRB}, we obtain the finite frame property of $\GR$ with respect to $\GRC$-frames. 
As a byproduct, we also obtain that $\GRB$ gives an alternative axiomatization of $\GR$. 

\begin{thm}[The finite frame property of $\GR$]\label{ffp_GR}
For any $\LT$-formula $A$, the following are equivalent: 
\begin{enumerate}
    \item $\GRB \vdash A$. 
    \item $\GR \vdash A$. 
    \item $A$ is valid on all non-trivial $\GRC$-frames. 
    \item $A$ is valid on all $\bb$-serial $\GRC$-frames. 
\end{enumerate}
\end{thm}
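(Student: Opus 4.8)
The plan is to prove the cycle $(1) \Rightarrow (2) \Rightarrow (3) \Rightarrow (1)$, supplemented by $(1) \Rightarrow (4)$ and $(4) \Rightarrow (3)$; together these make all four conditions equivalent. The two supplementary implications are immediate: $(1) \Rightarrow (4)$ is Proposition \ref{bb-serial}.(1), and $(4) \Rightarrow (3)$ holds because every non-trivial $\GRC$-frame is $\bb$-serial by Proposition \ref{bb-serial}.(2), so validity on all $\bb$-serial $\GRC$-frames entails validity on all non-trivial ones. Note that the resulting equivalence $(1) \Leftrightarrow (2)$ is precisely the promised statement that $\GRB$ is an alternative axiomatization of $\GR$, and $(2) \Leftrightarrow (3) \Leftrightarrow (4)$ is the completeness and finite frame property of $\GR$ with respect to $\GRC$-frames.

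First I would show $(1) \Rightarrow (2)$ by establishing $\GRB \subseteq \GR$, i.e.\ that $\GR$ proves every axiom of $\GRB$ and is closed under every rule of $\GRB$. The axioms of $\GRB$ are the axioms of $\GRM$, hence $\GR$-theorems, and $\GR$ is trivially closed under modus ponens and $\Box$-necessitation, and --- as already observed in Section \ref{sec_GRC} --- under $\bb$-necessitation. The only real issue is closure under the Rosser rule $\dfrac{\neg A}{\neg \bb A}$: if $\GR \vdash \neg A$, then $\GRM \vdash \Box \neg A$ by Fact \ref{GRM_GR}, and since the instance $\Diamond \bb A \to \Diamond A$ of axiom~7 of $\GRM$ is propositionally equivalent to $\Box \neg A \to \Box \neg \bb A$, we obtain $\GRM \vdash \Box \neg \bb A$, hence $\GR \vdash \neg \bb A$ again by Fact \ref{GRM_GR}.

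Next, $(3) \Rightarrow (1)$ I would prove by contraposition, in complete analogy with the proof of Theorem \ref{ffp_GRC}. Assuming $\GRB \nvdash A$, the pair $(\{\neg A\}, \emptyset)$ is inseparable (otherwise a separating formula, which is forced to be variable-free and $\bb$-free, would yield $\GRB \vdash A$). The model construction inside the proof of Theorem \ref{LIP_GRB}, applied to this inseparable pair (i.e.\ to the instance $\neg A \to \bot$), then produces a finite non-trivial $\GRC$-model in which $A$ is not valid, so $A$ is not valid on all non-trivial $\GRC$-frames.

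The remaining --- and, I expect, the only genuinely technical --- step is $(2) \Rightarrow (3)$: soundness of $\GR$ with respect to non-trivial $\GRC$-frames. By Proposition \ref{soundness_GRC} the axioms of $\GRM$ are valid on every $\GRC$-frame, and modus ponens and $\Box$-necessitation plainly preserve validity on non-trivial $\GRC$-frames, so the whole matter reduces to showing that the rule $\dfrac{\Box A}{A}$ preserves validity on this class of frames. Contrapositively, given a non-trivial $\GRC$-model $\mathcal{M} = (W, \acc, \{\prec_B\}_{B \in \MFT}, \Vdash)$ and $w \in W$ with $w \nVdash A$, I would adjoin a fresh root $r \notin W$, putting $W^+ := W \cup \{r\}$, $\acc^+ := \acc \cup (\{r\} \times W)$, $\prec_B^+ := \prec_B \cup (\{r\} \times W)$ for every $B \in \MFT$, and declaring all propositional variables false at $r$. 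One then checks that $\acc^+$ is again transitive and conversely well-founded and that the four $\GRC$-frame conditions hold: the new instances (those with source $r$) hold because $r$ sees every old world, and for the fourth condition this uses precisely that $\mathcal{M}$ is $\bb$-serial, which is guaranteed by Proposition \ref{bb-serial}.(2). Since $r$ lies strictly below $W$, the $\acc$- and $\prec_B$-successors of every world of $W$ are unchanged, so a routine induction on formulas shows that worlds of $W$ keep their truth values; in particular $w \nVdash A$ in $\mathcal{M}^+$, and since $r \acc^+ w$ we get $r \nVdash \Box A$ there. I expect this root-adjunction argument --- in particular the verification that $\mathcal{M}^+$ is again a $\GRC$-frame --- to be the main (though essentially routine) obstacle, the conceptual point being that the fourth $\GRC$-frame condition at the new root is nothing but $\bb$-seriality of $\mathcal{M}$.
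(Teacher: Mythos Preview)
Your argument is correct, and the overall logical structure is sound: the cycle $(1)\Rightarrow(2)\Rightarrow(3)\Rightarrow(1)$ together with $(1)\Rightarrow(4)$ and $(4)\Rightarrow(3)$ indeed yields the full equivalence. Your treatments of $(1)\Rightarrow(2)$, $(3)\Rightarrow(1)$, and $(4)\Rightarrow(3)$ match the paper's.

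The one genuine divergence is in $(2)\Rightarrow(3)$. The paper does \emph{not} prove soundness of the rule $\dfrac{\Box A}{A}$ by a root-adjunction argument; instead it observes that by Proposition~\ref{soundness_GRC} every $\GRC$-model automatically satisfies all axioms of $\GRM$ and hence \emph{is} a $\GRM$-model in Shavrukov's sense. Thus every non-trivial $\GRC$-model is a non-trivial $\GRM$-model, and Fact~\ref{compl_GRM}.(2) gives $(2)\Rightarrow(3)$ in one line. The root-adjunction construction you carry out for $(2)\Rightarrow(3)$ is in fact exactly the construction the paper uses, but for the implication $(3)\Rightarrow(4)$: starting from an arbitrary $\bb$-serial $\GRC$-frame, adjoin a fresh root to make it non-trivial, check the $\GRC$-frame conditions (the fourth one at the new root being precisely $\bb$-seriality), and conclude that validity on non-trivial frames transfers down to $\bb$-serial ones. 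So you and the paper perform the same technical step, just deploying it for different implications. The paper's route buys a cleaner $(2)\Rightarrow(3)$ at the cost of invoking Shavrukov's completeness for $\GRM$-models; your route is more self-contained with respect to the new semantics but leaves $(3)\Rightarrow(4)$ only as an indirect consequence of the full cycle rather than as a direct argument.
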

\begin{proof}
    $(1 \Rightarrow 2)$: This is because the rules $\dfrac{B}{\bb B}$ and $\dfrac{\neg B}{\neg \bb B}$ are admissible in $\GR$. 

    $(2 \Rightarrow 3)$: By Proposition \ref{soundness_GRC}, every $\GRC$-model can be considered as a $\GRM$-model. 
So, every non-trivial $\GRC$-model can be considered as a non-trivial $\GRM$-model. 
Hence, this implication follows from Fact \ref{compl_GRM}.(2). 
    
    $(3 \Rightarrow 4)$: Suppose that $A$ is valid on all non-trivial $\GRC$-frames. 
Let $(W, \acc, \{\prec_B\}_{B \in \MFT})$ be any $\bb$-serial $\GRC$-frame.  
We prepare a new element $r$ and define the following new frame $(W', \acc', \{\prec'_B\}_{B \in \MFT})$ as follows:
\begin{itemize}
    \item $W' = W \cup \{r\}$, 
    \item $\acc' = \acc \cup \{(r, x) \mid x \in W\}$, 
    \item $\prec_B' = \prec_B \cup \{(r, x) \mid x \in W\}$ for $B \in \MFT$. 
\end{itemize}
It is easy to show that $(W', \acc', \{\prec'_B\}_{B \in \MFT})$ is a $\GRC$-frame. 
We only give a proof of the property: 
\[
    \forall y \in W'\, (r \acc' y \Rightarrow \exists z \in W'\, (y \prec_B' z\ \&\ r \acc' z)).
\]
Suppose $r \acc' y$.  
By the $\bb$-seriality, we find some $z \in W$ such that $y \prec_B z$. 
Then, we have $y \prec_B' z$ and $r \acc' z$. 

Since $(W', \acc', \{\prec'_B\}_{B \in \MFT})$ is non-trivial, $A$ is valid on the frame. 
So, $A$ is also valid on $(W, \acc, \{\prec_B\}_{B \in \MFT})$. 

    $(4 \Rightarrow 3)$: This implication follows from Proposition \ref{bb-serial}.(2).     
    
    $(3 \Rightarrow 1)$: Suppose $\GRB \nvdash A$. 
 Then, as in the proof of Theorem \ref{ffp_GRC}, from our proof of Theorem \ref{LIP_GRB}, we obtain a finite non-trivial $\GRC$-model in which $A$ is not valid.
\end{proof}

From Theorems \ref{LIP_GRB} and \ref{ffp_GR}, we obtain LIP for $\GR$. 

\begin{cor}[LIP for $\GR$]
$\GR$ enjoys Lyndon interpolation property. 
\end{cor}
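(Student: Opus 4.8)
The plan is to derive LIP for $\GR$ directly from Theorem~\ref{LIP_GRB} together with Theorem~\ref{ffp_GR}, which tells us that $\GRB$ and $\GR$ prove exactly the same $\LT$-formulas. First I would take arbitrary $\LT$-formulas $A$ and $B$ with $\GR \vdash A \to B$. By the equivalence $(2 \Rightarrow 1)$ of Theorem~\ref{ffp_GR}, this yields $\GRB \vdash A \to B$.

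Next I would invoke Theorem~\ref{LIP_GRB}: since $\GRB \vdash A \to B$, there exists a Lyndon interpolant $C$ of $A \to B$ in $\GRB$ (satisfying $(\ddag)$, though we will not even need that extra condition here). That is, $v^*(C) \subseteq v^*(A) \cap v^*(B)$ for $* \in \{+,-\}$, and $\GRB \vdash A \to C$ and $\GRB \vdash C \to B$. Applying the equivalence $(1 \Rightarrow 2)$ of Theorem~\ref{ffp_GR} to each of these two derivations, we get $\GR \vdash A \to C$ and $\GR \vdash C \to B$ with the same variable-occurrence constraint on $C$. Hence $C$ is a Lyndon interpolant of $A \to B$ in $\GR$, so $\GR$ enjoys LIP.

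There is essentially no obstacle here; the corollary is a bookkeeping consequence of the two preceding theorems. The only point worth a sentence is that the transfer of provability between $\GRB$ and $\GR$ goes in \emph{both} directions — $(1 \Leftrightarrow 2)$ of Theorem~\ref{ffp_GR} — which is exactly what lets us pull the hypothesis down to $\GRB$, run the interpolation argument there, and push the two halves of the interpolant back up to $\GR$. All the real work (the semantic construction over maximally inseparable pairs, the $\bb$-seriality argument, and the identification of $\GRB$ with $\GR$) has already been done in Sections~\ref{sec_LIP_GRC} and~\ref{sec_LIP_GR}.
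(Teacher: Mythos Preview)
Your proposal is correct and matches the paper's own argument: the paper simply states that LIP for $\GR$ follows from Theorems~\ref{LIP_GRB} and~\ref{ffp_GR}, and your write-up spells out exactly that transfer via the equivalence $\GRB \vdash A \Leftrightarrow \GR \vdash A$.
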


\section{Uniform interpolation property}\label{sec_UIP}

In this section, we prove that the logics $\GRM$, $\GRC$, and $\GR$ enjoy uniform interpolation property. 

\begin{defn}
We say that a logic $L$ enjoys \textit{uniform interpolation property} iff for any formula $A$ and any finite set $P$ of propositional variables, there exists a formula $C$ satisfying the following conditions:
\begin{enumerate}
    \item $v(C) \subseteq v(A) \setminus P$, 
    \item $L \vdash A \to C$, 
    \item for any formula $B$, if $L \vdash A \to B$ and $v(B) \cap P = \emptyset$, then $L \vdash C \to B$. 
\end{enumerate}
Such a formula $C$ is called a uniform interpolant of $(A, P)$ in $L$. 
\end{defn}

\begin{fact}[Shavrukov \cite{Shavrukov93}]
$\GL$ enjoys uniform interpolation property. 
\end{fact}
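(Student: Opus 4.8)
The plan is to prove this semantically, in the spirit of the present paper, by exhibiting the uniform interpolant explicitly as a disjunction of ``characteristic formulas'' extracted from the finite model property of $\GL$. Fix an $\LBox$-formula $A$ and a finite set $P$ of propositional variables, put $Q := v(A) \setminus P$, and let $k$ be the modal depth of $A$. Recall that $\GL$ is sound and complete with respect to finite transitive and conversely well-founded frames (equivalently, finite strict partial orders), and that, $Q$ being finite, there are only finitely many $\LBox$-formulas of modal depth at most $k$ in the variables $Q$ up to $\GL$-provable equivalence. Choose a finite set $\Theta$ of representatives of the maximal $\GL$-consistent such ``$(Q,k)$-descriptions'', so that every pointed finite $\GL$-model satisfies exactly one member of $\Theta$, and define
\[
    C \ :=\ \bigvee \{\, \theta \in \Theta : \theta \wedge A \text{ is consistent in } \GL \,\}.
\]
Then $v(C) \subseteq Q = v(A) \setminus P$ by construction, and $\GL \vdash A \to C$: any pointed finite $\GL$-model of $A$ realizes its own $(Q,k)$-description $\theta \in \Theta$, and that $\theta$ is then a disjunct of $C$, so completeness gives the implication.

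The substantive clause is the third. Suppose $\GL \vdash A \to B$ with $v(B) \cap P = \emptyset$; by completeness it suffices to show that every pointed finite $\GL$-model $(N,u)$ of $C$ satisfies $B$. Since $(N,u) \models C$, its $(Q,k)$-description $\theta$ satisfies that $\theta \wedge A$ is consistent, so fix a pointed finite $\GL$-model $(M,w) \models A \wedge \theta$; thus $(N,u)$ and $(M,w)$ agree on all $\LBox$-formulas over $Q$ of depth at most $k$. The crux is then a model-surgery lemma: from such a pair one can build a pointed finite $\GL$-model $(N',u')$ that is $P$-bisimilar to $(N,u)$ (bisimilar via a relation required to respect only the valuations of variables outside $P$) and satisfies $A$. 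Granting this, $\GL \vdash A \to B$ gives $(N',u') \models B$, and since $B$ has no variable from $P$, $P$-bisimulation invariance of modal truth yields $(N,u) \models B$, as required.

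The main obstacle is precisely this surgery lemma. The construction grafts the structure of $(M,w)$ onto $(N,u)$: one forms a ``fibered'' model over the $k$-bounded $Q$-bisimulation linking $(N,u)$ and $(M,w)$, keeping the valuation of the $Q$-variables as in $N$ and that of the $P$-variables as in $M$ down to depth $k$, and simply copying $N$ (with an arbitrary valuation of $P$) below depth $k$; one must then check that the result is still transitive and conversely well-founded, hence a $\GL$-model, that the obvious relation is a $P$-bisimulation to $(N,u)$, and --- the delicate point --- that $(N',u') \models A$, using that $A$ has modal depth $k$ so only the grafted top $k$ layers matter. Note that the possibly larger modal depth of $B$ causes no trouble, since $B$ is handled abstractly through $P$-bisimulation invariance rather than through the $(Q,k)$-descriptions. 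Alternatively, the statement can be obtained proof-theoretically by Pitts-style bounded quantifier elimination on a terminating sequent calculus for $\GL$, or algebraically via a description of the finitely generated free $\GL$-algebras; the semantic route sketched above is the one most continuous with the methods of this paper.
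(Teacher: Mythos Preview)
The paper does not prove this statement: it is recorded as a Fact with a citation to Shavrukov~\cite{Shavrukov93} and then used as a black box in the proof of Theorem~\ref{UIP_GRM}. There is thus no proof in the paper to compare your proposal against; Shavrukov's own argument proceeds via the structure of finitely generated free diagonalizable algebras, which you yourself list among the alternatives at the end.

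On the merits of your sketch: the overall architecture --- take the post-interpolant to be the disjunction of the $(Q,k)$-types consistent with $A$ and reduce the third clause to an amalgamation lemma producing a $P$-bisimilar model of $A$ --- is sound and is essentially Visser's semantic route to UIP for $\GL$. The gap is that the surgery lemma is the entire content of the theorem, and your description of it does not go through as written. Talk of grafting ``down to depth $k$'' and then ``copying $N$ below depth $k$'' is natural for $\mathbf{K}$, where successor-distance matches modal depth, but in a transitive $\GL$-frame a single $\Box$ at the root already ranges over all points, so a layer-by-layer splice does not straightforwardly produce a transitive frame, and taking a transitive closure may destroy the depth-$k$ agreement with $(M,w)$ that you need to conclude $(N',u') \Vdash A$. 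You do flag that transitivity and converse well-foundedness must be checked, but that check is where all the work lies; it requires a more careful construction (for instance on finite tree unravelings, or an amalgam indexed by an explicit rank parameter compatible with transitivity) than the one you indicate.
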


\subsection{UIP for $\GRM$}

Firstly, we prove that $\GRM$ enjoys UIP in the same way as in Sidon's proof of CIP for $\GRM$. 
Here, we fix an $A \in \MFT$ and a finite set $P$ of propositional variables. 
We define the set $S_0(A)$ of all $\bb$-outermost-subformulas of $A$ recursively as follows: 
\begin{itemize}
    \item $S_0(B) = \{B\}$ when $B$ is either a propositional variable, $\bot$, or of the form $\bb C$, 
    \item $S_0(B \lor C) = S_0(B) \cup S_0(C) \cup \{B \lor C\}$, 
    \item $S_0(\ast B) = S_0(B) \cup \{\ast B\}$ for $\ast \in \{\neg, \Box\}$. 
\end{itemize}
For each $\bb D \in S_0(A)$, we prepare a propositional variable $q_D$ not occurring in $A$ and $P$. 
We define the translation $\dagger$ from $S_0(A)$ to $\MF_\Box$ recursively as follows:  
\begin{itemize}
\item $p^\dagger$ is $p$ for each propositional variable $p$, 
\item $\bot^\dagger$ is $\bot$, 
\item $\dagger$ preserves $\Box$ and propositional connectives, 
\item $(\bb D)^\dagger$ is $q_D$. 
\end{itemize}
For each $\bb D \in S_0(A)$, let $\Psi_D$ be the conjunction of the four $\MF_\Box$-formulas $q_D \to \Box D^\dagger$, $q_D \to \Box q_D$, $\Box D^\dagger \to (\Box \bot \vee q_D)$, and $\Diamond q_D \to \Diamond D^\dagger$. 
The following fact is implicitly stated in Shavrukov \cite[proof of Theorem 1.8]{Shavrukov} and Sidon \cite[proof of Theorem 2.1]{Sidon}, and is easily proved by using Kripke completeness of $\GL$ and Lemma 1.6 of \cite{Shavrukov}. 

\begin{fact}\label{dagger}
For any $\LT$-formula $A$,
\[
    \GRM \vdash A\ \text{if and only if}\ \GL \vdash \bigwedge_{\bb D \in S_0(A)} \boxdot \Psi_D \to A^\dagger.
\]
\end{fact}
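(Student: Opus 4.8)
The plan is to reduce provability in $\GRM$ to provability in $\GL$ by treating each $\bb$-outermost-subformula $\bb D$ of $A$ as a fresh propositional variable $q_D$ that is forced to behave like a Rosser box via the hypothesis $\boxdot \Psi_D$. The direction $\GL \vdash \bigwedge_{\bb D \in S_0(A)} \boxdot \Psi_D \to A^\dagger \Rightarrow \GRM \vdash A$ is the easy one: in $\GRM$ we can substitute $D$ for $q_D$ throughout, and then each conjunct of $\boxdot\Psi_D$ becomes $\boxdot$ of an instance of one of the $\GRM$-axioms $\bb D \to \Box D$, $\Box D \to \Box \bb D$, $\Box D \to (\Box\bot \vee \bb D)$, $\Diamond \bb D \to \Diamond D$; hence $\GRM$ proves each such conjunct (using $\Box$-necessitation and axiom~3 to get the $\boxdot$), so modus ponens on the substituted instance of the $\GL$-theorem yields $\GRM \vdash A$. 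Here I use that $\GRM$ extends $\GL$, which is immediate from the axioms.

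For the converse, $\GRM \vdash A \Rightarrow \GL \vdash \bigwedge_{\bb D \in S_0(A)} \boxdot\Psi_D \to A^\dagger$, the natural route is semantic, using Fact~\ref{compl_GRM}.(1): it suffices to show that any $\GL$-Kripke model refuting $\bigwedge \boxdot\Psi_D \to A^\dagger$ at a point can be converted into a $\GRM$-model refuting $A$. So first I would take a finite transitive irreflexive (hence conversely well-founded) Kripke model $(W,\acc,\Vdash)$ and a root $r$ such that $r \Vdash \boxdot\Psi_D$ for every $\bb D \in S_0(A)$ but $r \nVdash A^\dagger$. From $r \Vdash \boxdot\Psi_D$ and transitivity, every point $\acc$-accessible from $r$ (and $r$ itself) satisfies $\Psi_D$. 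Now I would define a new satisfaction relation $\Vdash^\ast$ on the submodel generated by $r$ that keeps the same valuation on the genuine variables of $A$ and same $\acc$, interprets $\Box$ as before, and sets $w \Vdash^\ast \bb D :\iff w \Vdash q_D$ for each $\bb D \in S_0(A)$, extended recursively; one shows by induction on the structure of $\bb$-outermost-subformulas that $w \Vdash^\ast E \iff w \Vdash E^\dagger$ for all $E \in S_0(A)$, so in particular $r \nVdash^\ast A$. The point of the four conjuncts of $\Psi_D$ is exactly that, under this reinterpretation, axioms~4--7 of $\GRM$ become true throughout the model (axioms~1--3 are automatic since $\acc$ is a transitive conversely well-founded $\GL$-frame and the propositional/$\Box$-clauses are untouched), so $(W,\acc,r,\Vdash^\ast)$ is a $\GRM$-model refuting $A$, contradicting $\GRM \vdash A$.

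The main obstacle I anticipate is a subtlety in the inductive correspondence $w \Vdash^\ast E \iff w \Vdash E^\dagger$: the translation $\dagger$ only collapses the \emph{outermost} $\bb$'s, so a formula $\bb D$ with $D$ itself containing further $\bb$'s is sent to a single variable $q_D$, while $\Vdash^\ast$ must interpret $\bb D$ via the inner satisfaction of $D$. To handle this cleanly one needs a secondary induction, on the nesting depth of $\bb$ — essentially iterating the whole construction, or invoking Lemma~1.6 of \cite{Shavrukov} which (as the text notes) packages exactly this bookkeeping. I would phrase the argument so that the correspondence is only claimed for formulas in $S_0(A)$ (no nested $\bb$ inside), and push the handling of nested $\bb D$ into the definition of $\Vdash^\ast$ by first applying the construction to the innermost occurrences and working outward; this stratification, together with the observation that $\boxdot \Psi_D$ being true at the root propagates $\Psi_D$ to the whole generated submodel via transitivity, is what makes the verification of $\GRM$'s axioms go through. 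Everything else is a routine check that each of the four conjuncts of $\Psi_D$ forces the corresponding $\GRM$-axiom instance.
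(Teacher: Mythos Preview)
Your proposal is correct and follows essentially the same approach the paper points to: the paper gives no detailed proof but notes that the fact is implicit in Shavrukov and Sidon and follows easily from Kripke completeness of $\GL$ together with Lemma~1.6 of \cite{Shavrukov}. You have correctly identified both directions---the substitution argument for one and the semantic countermodel conversion for the other---and rightly flag Shavrukov's Lemma~1.6 as the tool that handles the extension of the partial $\bb$-valuation (defined only for $\bb D \in S_0(A)$) to a full $\GRM$-model satisfying all axiom instances.
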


\begin{thm}[UIP for $\GRM$] \label{UIP_GRM}
$\GRM$ enjoys uniform interpolation property.
\end{thm}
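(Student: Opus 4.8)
The plan is to reduce UIP for $\GRM$ to UIP for $\GL$ via the translation $\dagger$ and Fact \ref{dagger}, exactly in the spirit of Sidon's proof of CIP for $\GRM$. Fix an $\LT$-formula $A$ and a finite set $P$ of propositional variables. First I would apply Fact \ref{dagger} to rewrite provability of implications out of $A$ in $\GRM$ as provability in $\GL$: we have $\GRM \vdash A \to B$ iff $\GL \vdash \bigwedge_{\bb D \in S_0(A \to B)} \boxdot \Psi_D \to (A \to B)^\dagger$. The key bookkeeping point is that $S_0(A \to B) = S_0(A) \cup S_0(B)$, and the fresh variables $q_D$ are chosen to depend only on $D$, so the "$\Psi$-context" splits cleanly; I will want the context for $A$, namely $\Theta_A := \bigwedge_{\bb D \in S_0(A)} \boxdot \Psi_D$, to be available on the left. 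Thus $\GRM \vdash A \to B$ is equivalent to $\GL \vdash \Theta_A \wedge A^\dagger \to \bigl(\bigwedge_{\bb D \in S_0(B) \setminus S_0(A)} \boxdot \Psi_D \to B^\dagger\bigr)$.

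Next, let $Q$ be the (finite) set of all the auxiliary variables $q_D$ for $\bb D \in S_0(A)$; these do not occur in $P$. Apply UIP for $\GL$ to the pair $(\Theta_A \wedge A^\dagger,\ P)$, obtaining a $\GL$-uniform interpolant $C_0 \in \MF_\Box$ with $v(C_0) \subseteq v(\Theta_A \wedge A^\dagger) \setminus P$, $\GL \vdash \Theta_A \wedge A^\dagger \to C_0$, and the usual universal property against all $\GL$-consequences avoiding $P$. The candidate uniform interpolant of $(A, P)$ in $\GRM$ is obtained by substituting back: replace each $q_D$ occurring in $C_0$ by $\bb D$, call the result $C$. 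Then $v(C) \subseteq v(A) \setminus P$ since the non-auxiliary variables of $\Theta_A \wedge A^\dagger$ are among $v(A)$, and the auxiliary variables get replaced by $\bb D$ with $v(D) \subseteq v(A)$. For condition (2), from $\GL \vdash \Theta_A \wedge A^\dagger \to C_0$ one shows $\GRM \vdash A \to C$: intuitively, substituting $\bb D$ back for $q_D$ is sound because in $\GRM$ the formula $\bb D$ actually satisfies the four conjuncts of $\Psi_D$ (these are instances of axioms 4–7), so the context $\Theta_A$ becomes a theorem after back-substitution; this is precisely the content of (the easy direction of) Fact \ref{dagger} applied to $A \to C$.

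For condition (3), suppose $\GRM \vdash A \to B$ with $v(B) \cap P = \emptyset$. By the reduction above, $\GL$ proves $\Theta_A \wedge A^\dagger$ implies $\bigwedge_{\bb D \in S_0(B)\setminus S_0(A)} \boxdot \Psi_D \to B^\dagger$; the latter formula avoids $P$ (its variables lie in $v(B) \cup Q \cup \{q_D : \bb D \in S_0(B)\}$, all disjoint from $P$). By the universal property of $C_0$ in $\GL$, $\GL \vdash C_0 \to \bigl(\bigwedge_{\bb D \in S_0(B)\setminus S_0(A)} \boxdot \Psi_D \to B^\dagger\bigr)$. Now substitute $\bb D$ for every $q_D$ throughout — here one must be careful that the substitution covers all auxiliary variables appearing anywhere in this $\GL$-theorem, i.e. those from $S_0(A)$ and from $S_0(B)$; after substitution, $C_0$ becomes $C$, $B^\dagger$ becomes $B$, and each $\boxdot \Psi_D$ becomes a $\GRM$-theorem, so $\GRM \vdash C \to B$, as desired.

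The main obstacle I anticipate is the last bookkeeping step: making sure the single back-substitution $q_D \mapsto \bb D$ is simultaneously sound for the $A$-context and the $B$-context, and that Fact \ref{dagger} is really being applied in both directions to the right composite formulas ($A \to C$ for soundness, and a formula equivalent to $A \to B$ for the universal property). One has to verify that $S_0$ interacts correctly with $\to$ and $\neg$ so that the contexts decompose as claimed, and that the fresh variables $q_D$ are genuinely fresh with respect to $P$, $A$, and $B$ (which may require enlarging the pool of auxiliary variables, but this is harmless). Modulo this careful tracking of which auxiliary variables appear where, the argument is a routine transfer of UIP along the conservative translation $\dagger$, parallel to Sidon's transfer of CIP.
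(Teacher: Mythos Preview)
Your overall strategy is the same as the paper's, but there is a genuine gap in your verification of condition~(1), namely $v(C) \subseteq v(A) \setminus P$. You apply UIP for $\GL$ to the pair $(\Theta_A \wedge A^\dagger, P)$, so the interpolant $C_0$ is only guaranteed to avoid $P$; it may still contain auxiliary variables $q_D$ for any $\bb D \in S_0(A)$. After back-substituting $q_D \mapsto \bb D$, the resulting $C$ will contain all variables of such $D$, and nothing prevents $v(D) \cap P \neq \emptyset$. Your justification ``the auxiliary variables get replaced by $\bb D$ with $v(D) \subseteq v(A)$'' only gives $v(C) \subseteq v(A)$, not $v(C) \subseteq v(A) \setminus P$. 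Concretely, take $A = \bb p$ and $P = \{p\}$: then $\Theta_A \wedge A^\dagger$ has variables $\{p, q_p\}$, your $C_0$ may well contain $q_p$, and after substitution $C$ contains $p$, whereas the required interpolant must be variable-free.

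The paper's fix is exactly to enlarge the set of variables eliminated in the $\GL$-step: instead of eliminating only $P$, one eliminates
\[
Q := P \cup \{\, q_D \mid \bb D \in S_0(A)\ \text{and}\ v(D) \cap P \neq \emptyset \,\},
\]
so that any $q_D$ surviving in $C_0$ has $v(D) \cap P = \emptyset$, and back-substitution cannot reintroduce $P$-variables. Your argument for condition~(3) then needs the additional (easy) observation that the target formula $\bigwedge_{\bb E \in S_0(B)} \boxdot \Psi_E \to B^\dagger$ avoids not just $P$ but all of $Q$: since $v(B) \cap P = \emptyset$, every $\bb E \in S_0(B)$ has $v(E) \cap P = \emptyset$, so no $q_E$ appearing there can equal a $q_D$ with $v(D) \cap P \neq \emptyset$. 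With this correction your proof becomes essentially the paper's.
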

\begin{proof} 
We fix an $A \in \MFT$ and a finite set $P$ of propositional variables. 
Let $Q : = P \cup \{q_D \mid \bb D \in S_0(A)$ and $v(D) \cap P \neq \emptyset\}$. 
By applying UIP for $\GL$, we find a uniform interpolant $C$ of $\bigl(\bigwedge_{\bb D \in S_0(A)} \boxdot \Psi_D \land A^\dagger, Q \bigr)$ in $\GL$. 
That is, 
\begin{enumerate}
\item $v(C) \subseteq v\bigl(\bigwedge_{\bb D \in S_0(A)} \boxdot \Psi_D \land A^\dagger \bigr) \setminus Q$, 
\item $\GL \vdash \bigwedge_{\bb D \in S_0(A)} \boxdot \Psi_D \land A^\dagger \to C$, 
\item for any $B \in \MF_\Box$, if $\GL \vdash \bigwedge_{\bb D \in S_0(A)} \boxdot \Psi_D \land A^\dagger \to B$ and $v(B) \cap Q = \emptyset$, then $\GL \vdash C \to B$.
\end{enumerate}
Here, $v\bigl(\bigwedge_{\bb D \in S_0(A)} \boxdot \Psi_D \land A^\dagger \bigr) \setminus Q$ is
\[
  (v(A) \setminus P) \cup \{q_D \mid \bb D \in S_0(A)\ \text{and}\ v(D) \cap P = \emptyset\}. 
\]
Let $\sigma$ be a uniform substitution so that $\sigma(q_D) = \bb D$ for each $\bb D \in S_0(A)$. 
We prove that $\sigma(C)$ is a uniform interpolant of $(A, P)$ in $\GRM$.

\begin{itemize}
\item Since we have $v(\bb D) \subseteq v(A) \setminus P$ for each $\bb D \in S_0(A)$ with $v(D) \cap P = \emptyset$, we obtain $v(\sigma(C)) \subseteq v(A) \setminus P$.

\item  Since $\GRM$ is an extension of $\GL$, we have $\GRM \vdash \bigwedge_{\bb D \in S_0(A)} \boxdot \Psi_D \land A^\dagger \to C$. 
Then,
\[
    \GRM \vdash \bigwedge_{\bb D \in S_0(A)} \sigma(\boxdot \Psi_D) \land \sigma(A^\dagger) \to \sigma(C). 
\]
It is easy to see that $\GRM \vdash \sigma(\boxdot \Psi_D)$ and $\sigma(A^\dagger) = A$. 
Hence, $\GRM \vdash A \to \sigma(C)$. 

\item Let $B$ be any $\LT$-formula such that $\GRM \vdash A \to B$ and $v(B) \cap P = \emptyset$. 
Then, by Fact \ref{dagger}, 
\[
    \GL \vdash \bigwedge_{\bb D \in S_0(A \to B)} \boxdot \Psi_D \to (A^\dagger \to B^\dagger).
\]
Here, we may assume $v\bigl(\bigwedge_{\bb D \in S_0(A \to B)} \Psi_D \bigr) \cap P = \emptyset$. 
Then, we obtain 
\[
    \GL \vdash \bigwedge_{\bb D \in S_0(A)} \boxdot \Psi_D \wedge A^\dagger \to \bigl(\bigwedge_{\bb E \in S_0(B)} \boxdot \Psi_E \to B^\dagger \bigr).
\]
Since $v(B) \cap P = \emptyset$, for each $\bb E \in S_0(B)$, we have $v(E) \cap P = \emptyset$. 
Hence, $v\bigl(\bigwedge_{\bb E \in S_0(B)} \boxdot \Psi_E \to B^\dagger \bigr) \cap Q = \emptyset$. 
Since $C$ is an appropriate uniform interpolant, $\GL \vdash C \to \bigl(\bigwedge_{\bb E \in S_0(B)} \boxdot \Psi_E \to B^\dagger \bigr)$, and this formula is also provable in $\GRM$. 
By applying $\sigma$, we have $\GRM \vdash \sigma(C) \to (\bigwedge_{\bb E \in S_0(B)} \sigma(\boxdot \Psi_E) \to \sigma(B^\dagger))$. 
We conclude $\GRM \vdash \sigma(C) \to B$. \qedhere
\end{itemize}
\end{proof}

\subsection{UIP for $\GRC$}

Secondly, we prove UIP for $\GRC$ by embedding $\GRC$ into $\GRM$. 
Let $A^\top$ be the $\LT$-formula obtained from $A$ by replacing all $\bb D \in S_0(A)$ satisfying $\GRC \vdash D$ by $\neg \bot$. 
For example, for a formula $D$ with $\GRC \vdash D$, we have that $(\Box \bb D \lor \bb p)^\top$ is $\Box \neg \bot \lor \bb p$. 
It is easy to show that $\GRC \vdash A \leftrightarrow A^\top$. 
The following proposition is inspired by Sidon's technique on the embedding of $\GR$ into $\GRM$ (cf.~\cite[Theorem 2.2]{Sidon}). 

\begin{prop}\label{GRM_GRC}
For any $\LT$-formula $A$, $\GRM \vdash A^\top$ if and only if $\GRC \vdash A^\top$. 
Consequently, $\GRM \vdash A^\top$ if and only if $\GRC \vdash A$. 
\end{prop}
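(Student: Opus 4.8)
The plan is to prove the first equivalence, $\GRM \vdash A^\top \iff \GRC \vdash A^\top$, and then derive the ``consequently'' part from it together with the already-noted fact $\GRC \vdash A \leftrightarrow A^\top$. Since $\GRC$ extends $\GRM$, the direction $\GRM \vdash A^\top \Rightarrow \GRC \vdash A^\top$ is trivial, so the content is the converse: if $\GRC \vdash A^\top$ then $\GRM \vdash A^\top$. The idea, following Sidon's embedding technique, is that in $A^\top$ every $\bb$-outermost subformula $\bb D$ has $\GRC \nvdash D$; I want to exploit this to convert a $\GRC$-proof of $A^\top$ into a $\GRM$-proof. The natural route is semantic: assume $\GRM \nvdash A^\top$, take (via Fact~\ref{compl_GRM}) a $\GRM$-model $\mathcal{M} = (W, \acc, r, \Vdash)$ with $r \nVdash A^\top$, and then \emph{modify} the valuation of the $\bb$-formulas so that the resulting model becomes (the $\bb$-part of) a $\GRC$-model, i.e.\ one honoring the $\bb$-necessitation rule, while preserving $r \nVdash A^\top$; then Proposition~\ref{soundness_GRC} (after forgetting the $\prec_B$-structure, or rather noting that such a model refutes $A^\top$ over $\GRC$) gives $\GRC \nvdash A^\top$.

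Concretely, I would proceed as follows. First I would spell out what goes wrong with a bare $\GRM$-model as a $\GRC$-model: $\bb$-necessitation forces $w \Vdash \bb D$ whenever $\GRC \vdash D$, and $\GRM$-models need not satisfy this. Since only finitely many $\bb$-outermost subformulas $\bb D_1, \dots, \bb D_k$ of $A^\top$ are relevant, and by construction each satisfies $\GRC \nvdash D_i$, for each $i$ I can pick (by Theorem~\ref{LIP_GRC}/Theorem~\ref{ffp_GRC}, or just completeness of $\GRC$ with respect to $\GRC$-frames) a finite $\GRC$-model witnessing $\nVdash D_i$ at some point. The key step is then to glue: build a $\GRC$-frame whose $\acc$-structure contains $\mathcal{M}$ (reading $\bb$-subformulas of $A^\top$ as new atoms in $\mathcal M$), arrange the $\prec_{D_i}$ relations using the seriality/back conditions in the definition of $\GRC$-frames so that each $w$ has a $\prec_{D_i}$-successor refuting $D_i$ exactly when $\GRM$-truth at $w$ already said $w \nVdash \bb D_i$, and so that where $\GRM$-truth said $w \Vdash \bb D_i$ we either have no $\prec_{D_i}$-successor or all successors force $D_i$. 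The four frame conditions ($\acc \subseteq \prec_B$, the two ``transfer'' conditions, and the back condition) have to be checked against this construction; making them hold simultaneously while controlling $\bb$-truth is the delicate bookkeeping.

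There is, however, a much cleaner syntactic route that I would try first and expect to work, avoiding the gluing entirely. By Fact~\ref{dagger}, $\GRM \vdash A^\top$ iff $\GL \vdash \bigwedge_{\bb D \in S_0(A^\top)} \boxdot \Psi_D \to (A^\top)^\dagger$, where $\dagger$ replaces each $\bb D$ by a fresh variable $q_D$ and $\Psi_D$ is the conjunction of the four Shavrukov axiom-instances for $q_D$. An analogous characterization should hold for $\GRC$: since $\GRC = \GRM + \frac{A}{\bb A}$, one gets $\GRC \vdash A^\top$ iff $\GL \vdash \bigwedge_{\bb D \in S_0(A^\top)} \boxdot \Psi_D \wedge \bigwedge_{\GRC \vdash D} q_D \to (A^\top)^\dagger$ --- intuitively, $\bb$-necessitation adds, for the provable $D$'s, the extra hypothesis $q_D$ (coherently with $\bb D$ being a $\GRM+$necessitation theorem). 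But in $A^\top$ \emph{no} $\bb D \in S_0(A^\top)$ has $\GRC \vdash D$, by the very definition of $(\cdot)^\top$; hence the extra conjunction is empty and the $\GRC$-characterization collapses to the $\GRM$-characterization, giving $\GRM \vdash A^\top \iff \GRC \vdash A^\top$ at once. The main obstacle is thus to establish this $\GRC$-analogue of Fact~\ref{dagger} rigorously --- i.e.\ to show $\bb$-necessitation contributes exactly the hypotheses $q_D$ for $\GRC$-provable $D$ and nothing more; this is plausible by a fixed-point / induction-on-proofs argument in the style of \cite[Lemma~1.6]{Shavrukov}, but it requires care because $\GRC$-provability of $D$ is itself defined in terms of $\GRC$, so one must argue by a suitable induction (e.g.\ on the complexity of $\bb$-nesting, or by iterating). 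Once the first equivalence is in hand, the ``consequently'' clause is immediate: $\GRC \vdash A \Leftrightarrow \GRC \vdash A^\top \Leftrightarrow \GRM \vdash A^\top$, using $\GRC \vdash A \leftrightarrow A^\top$.
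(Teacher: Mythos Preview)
Your first, semantic sketch is the route the paper actually takes, but you miss the one observation that turns ``delicate bookkeeping'' into a routine construction. Since $\GRC \vdash A^\top$ implies $\GR \vdash A^\top$, Fact~\ref{compl_GRM}.(2) forces $A^\top$ to hold in every \emph{non-trivial} $\GRM$-model; hence any $\GRM$-countermodel to $A^\top$ is necessarily the one-point model $(\{r\}, \emptyset, r, \Vdash)$. With $\acc$ empty, all four $\GRC$-frame conditions are vacuous at $r$, so the $\prec_B$-relations out of $r$ may be chosen freely. The paper then glues: for each $C$ with $\GRC \nvdash C$ it takes (by Theorem~\ref{ffp_GRC}) a $\GRC$-model refuting $C$ at some $w^C$, and sets $r \prec_B^* w^B$ exactly when $r \nVdash \bb B$ and $\GRC \nvdash B$; an induction on $S_0(A^\top)$ (using that every $\bb D \in S_0(A^\top)$ has $\GRC \nvdash D$, by the definition of $(\cdot)^\top$) shows $r$ still refutes $A^\top$ in the glued $\GRC$-model, contradicting soundness. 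Without the single-point reduction your worry about the frame conditions is warranted: Conditions~1 and~3 force $\prec_D$ to coincide with $\acc$ at every non-terminal point, and Condition~2 forbids a terminal point $y$ with $x \acc y$ from having $\prec_D$-successors outside the $\acc$-cone of $x$, so attaching fresh $D$-refuting models to an arbitrary $\GRM$-model as you describe does not straightforwardly produce a $\GRC$-frame.

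Your preferred syntactic route is a genuinely different idea, but as you yourself flag, it rests on an unproved $\GRC$-analogue of Fact~\ref{dagger}, and the circularity you point out (the characterization of $\GRC$-provability refers back to $\GRC$-provability of subformulas) is a real obstacle, not merely a matter of bookkeeping. Even the shape of the proposed analogue is not clearly correct: it adds the hypothesis $q_D$ only for $\bb D \in S_0(A^\top)$ with $\GRC \vdash D$, but a $\GRC$-proof may invoke $\bb$-necessitation on formulas nested strictly below the outermost $\bb$, and their contribution is not visible at the level of $S_0$. So as it stands the proposal does not deliver a proof; the paper's semantic argument, once one sees the single-point reduction, is both shorter and complete.
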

\begin{proof}
We prove only the implication $(\Leftarrow)$ of the first statement. 
Suppose, towards a contradiction, that $\GRC \vdash A^\top$ and $\GRM \nvdash A^\top$. 
Since $\GR \vdash A^\top$, by Fact \ref{compl_GRM}.(2), we have that $A^\top$ is valid in all non-trivial $\GRM$-models. 
So, by Fact \ref{compl_GRM}.(1), there exists a $\GRM$-model $(\{r\}, \emptyset, r, \Vdash)$ such that $r \nVdash A^\top$. 
For each $C \in \MFT$ with $\GRC \nvdash C$, by Theorem \ref{ffp_GRC}, there exist a $\GRC$-model $(W^C, \acc^C, \{\prec_B^C\}_{B \in \MFT}, \Vdash^C)$ and an element $w^C \in W^C$ such that $w^C \nVdash^C C$. 
We define the model $(W^*, \acc^*, \{\prec_B^*\}_{B \in \MFT}, \Vdash^*)$ as follows: 
\begin{itemize}
    \item $W^* = \{r\} \cup \bigcup \{W^C \mid \GRC \nvdash C\}$, 
    \item $\acc^* = \bigcup \{ \acc^C \mid \GRC \nvdash C\}$, 
    \item for each $B \in \MFT$, 
    \[
        \prec_B^* = \begin{cases} \{(r, w^B)\} \cup \bigcup \{ \prec_B^C \mid \GRC \nvdash C\} & \text{if}\ r \nVdash \bb B\ \&\ \GRC \nvdash B, \\
        \bigcup \{ \prec_B^C \mid \GRC \nvdash C\} & \text{otherwise},  
        \end{cases}
    \]
    \item for $x \in W^C$, $x \Vdash^* p \iff x \Vdash^C p$; and $r \Vdash^* p \iff r \Vdash p$. 
\end{itemize}
It is easy to see that the model $(W^*, \acc^*, \{\prec_B^*\}_{B \in \MFT}, \Vdash^*)$ is actually a $\GRC$-model. 
Also, it is shown that $w^C \nVdash^* C$ for each $C \in \MFT$ with $\GRC \nvdash C$. 

\begin{cl*}
    For any $B \in S_0(A^\top)$, we have $r \Vdash^* B$ if and only if $r \Vdash B$. 
\end{cl*}
\begin{proof}
    We prove the claim by induction on the construction of $B \in S_0(A^\top)$. 
    It suffices to show the case that $B$ is of the form $\bb C$. 

    $(\Rightarrow)$: 
    Suppose $r \nVdash \bb C$. 
    Since $\bb C \in S_0(A^\top)$, we have $\GRC \nvdash C$. 
    Then, we obtain $r \prec_C^* w^C$ and $w^C \nVdash^* C$. 
    Thus, $r \nVdash^* \bb C$. 

    $(\Leftarrow)$: Suppose $r \Vdash \bb C$. 
    Then, there is no $x \in W^*$ such that $r \prec_C^* x$. 
    We have $r \Vdash^* \bb C$. 
    \end{proof}
Since $r \nVdash A^\top$, by the claim, $r \nVdash^* A^\top$. 
By Theorem \ref{ffp_GR}, we conclude $\GRC \nvdash A^\top$. 
This is a contradiction. 
\end{proof}

\begin{thm}[UIP for $\GRC$]
$\GRC$ enjoys uniform interpolation property.
\end{thm}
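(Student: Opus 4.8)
The plan is to mimic the proof of UIP for $\GRM$ (Theorem \ref{UIP_GRM}) but routed through the embedding of $\GRC$ into $\GRM$ established in Proposition \ref{GRM_GRC}. Fix an $\LT$-formula $A$ and a finite set $P$ of propositional variables. First I would replace $A$ by $A^\top$; since $\GRC \vdash A \leftrightarrow A^\top$, a uniform interpolant of $(A^\top, P)$ in $\GRC$ is also one of $(A, P)$, and moreover $v(A^\top) \subseteq v(A)$. Now, by Proposition \ref{GRM_GRC}, provability of implications of the form $A^\top \to B$ in $\GRC$ corresponds to provability of $(A^\top \to B)^\top = A^\top \to B^\top$ in $\GRM$ (one must be slightly careful here: applying $\top$ to the implication only rewrites those $\bb D$-subformulas whose $D$ is $\GRC$-provable, and since $A^\top$ already has all such subformulas in the antecedent removed, the operation affects only the $B$ part, giving $A^\top \to B^\top$). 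So the problem reduces to finding, for the formula $A^\top$ and the set $P$, a formula $C$ with $v(C) \subseteq v(A^\top) \setminus P$, $\GRM \vdash A^\top \to C^\top$ (equivalently $\GRC \vdash A^\top \to C$), and such that whenever $\GRM \vdash A^\top \to B^\top$ with $v(B) \cap P = \emptyset$ then $\GRM \vdash C^\top \to B^\top$.

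The natural move is then to invoke UIP for $\GRM$ (Theorem \ref{UIP_GRM}) applied to the pair $(A^\top, P)$, obtaining a $\GRM$-uniform interpolant $C$ with $v(C) \subseteq v(A^\top)\setminus P$, $\GRM \vdash A^\top \to C$, and the universal property for all $\GRM$-consequences $B$ of $A^\top$ avoiding $P$. The second and third conditions for a $\GRC$-uniform interpolant of $(A^\top, P)$ are then obtained by composing with the $\leftrightarrow^\top$-equivalences: from $\GRM \vdash A^\top \to C$ and $\GRM \supseteq \GL$-style reasoning we need $\GRC \vdash A^\top \to C$, which follows since $\GRC$ extends $\GRM$; and conversely if $\GRC \vdash A^\top \to B$ with $v(B)\cap P = \emptyset$, then since $\GRC \vdash B \leftrightarrow B^\top$ we get $\GRC \vdash A^\top \to B^\top$, hence by Proposition \ref{GRM_GRC} (noting $(A^\top \to B^\top)^\top$ is $\GRC$-equivalent to $A^\top \to B^\top$ and is $\top$-stable) $\GRM \vdash A^\top \to B^\top$, and $v(B^\top) \cap P = \emptyset$, so the $\GRM$-universal property of $C$ gives $\GRM \vdash C \to B^\top$, whence $\GRC \vdash C \to B$.

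The point that needs the most care — and what I expect to be the main obstacle — is the bookkeeping around the $\top$-operation when it is applied to compound formulas, specifically the identity (up to $\GRC$-provable equivalence) $(A^\top \to B)^\top \equiv A^\top \to B^\top$ and the idempotence/stability facts like $(B^\top)^\top = B^\top$ and $v(B^\top) \cap P = \emptyset$ whenever $v(B) \cap P = \emptyset$. These hinge on the fact that $S_0$ distributes over the propositional structure and that replacing a $\GRC$-provable $\bb D$ by $\neg\bot$ does not introduce new variables nor new $\bb$-outermost-subformulas with $\GRC$-provable bodies; one also uses that $\GRC \vdash D$ is unaffected by passing to $D^\top$. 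A secondary subtlety is verifying that $v(C) \subseteq v(A^\top) \setminus P \subseteq v(A) \setminus P$, which is immediate once the $\top$-operation is known not to add variables. Apart from these essentially syntactic checks, the argument is a direct transfer of Theorem \ref{UIP_GRM} along Proposition \ref{GRM_GRC}, so I would keep the exposition short and delegate the routine $\top$-manipulations to the reader.
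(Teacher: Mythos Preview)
Your proposal is correct and follows essentially the same route as the paper: apply UIP for $\GRM$ to $(A^\top, P)$, then use Proposition \ref{GRM_GRC} together with $\GRC \vdash A \leftrightarrow A^\top$ and $\GRC \vdash B \leftrightarrow B^\top$ to check that the resulting $C$ is a uniform interpolant of $(A, P)$ in $\GRC$. The only difference is presentational: the paper invokes Proposition \ref{GRM_GRC} in the form ``$\GRC \vdash A \to B \Rightarrow \GRM \vdash A^\top \to B^\top$'' directly (using $(A\to B)^\top = A^\top \to B^\top$), whereas you first pass to $A^\top$ and then argue via $\top$-idempotence; both amount to the same syntactic bookkeeping you flag as the main care point.
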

\begin{proof}
    We fix an $\LT$-formula $A$ and a finite set $P$ of propositional variables. 
    By applying UIP for $\GRM$, we find a uniform interpolant $C$ of $(A^\top, P)$ in $\GRM$. 
    That is, 
    \begin{enumerate}
\item $v(C) \subseteq v(A^\top) \setminus P$, 
\item $\GRM \vdash A^\top \to C$, 
\item for any $B \in \MFT$, if $\GRM \vdash A^\top \to B$ and $v(B) \cap P = \emptyset$, then $\GRM \vdash C \to B$.
\end{enumerate}
We prove that $C$ is a uniform interpolant of $(A, P)$ in $\GRC$. 
We have $v(C) \subseteq v(A) \setminus P$ because $v(A^\top) \subseteq v(A)$. 
Since $\GRC \vdash A \leftrightarrow A^\top$, we get $\GRC \vdash A \to C$. 
Let $B$ be any $\LT$-formula such that $\GRC \vdash A \to B$ and $v(B) \cap P = \emptyset$. 
By Proposition \ref{GRM_GRC}, $\GRM \vdash A^\top \to B^\top$. 
Since $v (B^\top) \setminus P \subseteq v(B) \setminus P = \emptyset$, by the choice of $C$, we obtain $\GRM \vdash C \to B^\bot$. 
Then, we conclude $\GRC \vdash C \to B$ because $\GRC \vdash B \leftrightarrow B^\top$. 
\end{proof}

\subsection{UIP for $\GR$}

Finally, we prove UIP for $\GR$. 
Actually, UIP for $\GR$ can be proved by using UIP for $\GRM$ (Theorem \ref{UIP_GRM}) and Sidon's result on the embedding of $\GR$ into $\GRM$ (\cite[Theorem 2.2]{Sidon}). 
Here, we prove a proposition on the embedding of $\GR$ into $\GRC$, and show that UIP for $\GR$ also follows from UIP for $\GRC$ by using our embedding result. 
For each $\LT$-formula $A$, we define the $\LT$-formula $\Theta_A$ by 
\[
 \bigwedge \{\neg \bb C \mid \bb C \in S(A)\ \text{and}\ \GR \vdash \neg C\}. 
\]

\begin{prop}\label{GRC_GR}
For any $\LT$-formula $A$, $\GRC \vdash \Theta_A \to A$ if and only if $\GR \vdash A$. 
\end{prop}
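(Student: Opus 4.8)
The plan is to prove the two directions separately: the forward direction $\GRC \vdash \Theta_A \to A \Rightarrow \GR \vdash A$ is immediate from the admissibility of the Rosser rule in $\GR$, while the converse will go through the relational semantics of Section \ref{sec_GRC} together with a piece of model surgery.

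For $(\Rightarrow)$, since $\bb$-necessitation is admissible in $\GR$ we have $\GRC \subseteq \GR$, so the hypothesis yields $\GR \vdash \Theta_A \to A$. Every conjunct $\neg \bb C$ of $\Theta_A$ comes with $\GR \vdash \neg C$ by the definition of $\Theta_A$, and since the Rosser rule $\dfrac{\neg C}{\neg \bb C}$ is admissible in $\GR$ (already used in the proof of Theorem \ref{ffp_GR}) we get $\GR \vdash \neg \bb C$; hence $\GR \vdash \Theta_A$ and $\GR \vdash A$ follows by modus ponens.

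For $(\Leftarrow)$ I would argue contrapositively: assuming $\GRC \nvdash \Theta_A \to A$, I want to build a non-trivial $\GRC$-model refuting $A$, which by Theorem \ref{ffp_GR} gives $\GR \nvdash A$. By the finite frame property of $\GRC$ (Theorem \ref{ffp_GRC}) there are a finite $\GRC$-model $\mathcal{M} = (W, \acc, \{\prec_B\}_{B \in \MFT}, \Vdash)$ and $w_0 \in W$ with $w_0 \Vdash \Theta_A$ and $w_0 \nVdash A$; the task is to convert $\mathcal{M}$ into a non-trivial one while keeping $A$ refuted. The structural point driving the conversion is that in any $\GRC$-frame the only points that can lack a $\prec_B$-successor are the $\acc$-isolated ones: a point with an $\acc$-successor has a $\prec_B$-successor by the first frame condition, and a point with an $\acc$-predecessor has one by the fourth. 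Thus if $w_0$ is not $\acc$-isolated I would pass to the submodel generated under $\acc$ by an $\acc$-minimal point $m$ with $m = w_0$ or $m \acc w_0$; the second and third frame conditions make this a $\GRC$-subframe in which all truth values — in particular $w_0 \nVdash A$ — are preserved, and it is non-trivial. If $w_0$ is $\acc$-isolated I would instead adjoin a fresh root below all points and saturate the $\prec_B$-relations of the formerly $\acc$-isolated points: for $\bb B \in S(A)$ with $w_0 \Vdash_{\mathcal{M}} \bb B$, the hypothesis $w_0 \Vdash \Theta_A$ forces $\GR \nvdash \neg B$ (otherwise $\neg \bb B$ would be a conjunct of $\Theta_A$), so $\neg B$ has a non-trivial $\GRC$-countermodel, which I glue on as the missing $\prec_B$-successor using the gluing technique in the proof of Proposition \ref{GRM_GRC}; the remaining $\prec_B$-relations, where $\bb B \notin S(A)$, do not affect the value of $A$ at $w_0$ and may be saturated arbitrarily.

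I expect the main obstacle to be exactly this saturation step: checking that the amended structure still satisfies all four $\GRC$-frame conditions and, more delicately, that the surgery leaves $w_0 \nVdash A$ intact — the latter verified by an induction on the subformulas of $A$, where for a $\prec_B$-successor $z_B$ of $w_0$ witnessing $w_0 \nVdash \bb B$ one uses either that $z_B \nVdash B$ is preserved by the inductive treatment of $\neg B$ or, when $\GR \vdash \neg B$, that $\neg B$ holds automatically in the resulting model of $\GR$. It is precisely here that $\Theta_A$ earns its place: it guarantees that the boxed subformulas of $A$ that must be false in every model of $\GR$ — namely the $\bb C \in S(A)$ with $\GR \vdash \neg C$ — are already false at $w_0$, so that forcing $\bb$-seriality never clashes with the requirement of refuting $A$.
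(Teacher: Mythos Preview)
Your proposal follows essentially the same strategy as the paper. The forward direction is identical. For the converse, the paper argues by contradiction: assuming both $\GR \vdash A$ and $\GRC \nvdash \Theta_A \to A$, it uses Theorems~\ref{ffp_GRC} and~\ref{ffp_GR} to observe that any non-trivial $\GRC$-model already validates $A$, and hence reduces at once to a \emph{singleton} $\GRC$-model $(\{r\}, \emptyset, \{\prec_B\}, \Vdash)$ with $r \Vdash \Theta_A$ and $r \nVdash A$; from this singleton it glues $\bb$-serial $\GRC$-models $W^C$ (one for each $C$ with $\GR \nvdash \neg C$, pointing $r \prec_C^* w^C$ when $r \Vdash \bb C$, and $r \prec_C^* r$ otherwise) to obtain a $\bb$-serial model and a contradiction. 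Your contrapositive organization forces you to treat the non-isolated case separately (your Case~1, which the paper's extra hypothesis $\GR \vdash A$ absorbs), and in Case~2 you keep all of $\mathcal{M}$ and adjoin a fresh root, which obliges you to saturate \emph{every} formerly isolated point, not just $w_0$; the truth-preservation induction must then track how those saturations affect the $\prec_B$-successors $z_B$ of $w_0$ that already live in $\mathcal{M}$ --- this is exactly the ``inductive treatment of $\neg B$'' you flag as delicate. The paper's singleton reduction sidesteps this completely: since the only possible $\prec_C$-successor of $r$ in the starting model is $r$ itself, $r \nVdash \bb C$ yields $r \nVdash C$ directly, and the induction at $r$ is clean. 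Your plan is workable, but discarding $\mathcal{M}$ in favour of a singleton, as the paper does, buys a noticeably simpler endgame.
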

\begin{proof}
$(\Rightarrow)$: Since $\GR \vdash \Theta_A$, this implication trivially holds. 

$(\Leftarrow)$: Suppose, towards a contradiction, that $\GR \vdash A$ and $\GRC \nvdash \Theta_A \to A$. 
By Theorems \ref{ffp_GRC} and \ref{ffp_GR}, there exists a $\GRC$-model $(\{r\}, \emptyset, \{\prec_B\}_{B \in \MFT}, \Vdash)$ such that $r \Vdash \Theta_A$ and $r \nVdash A$. 
For each $C \in \MFT$ with $\GR \nvdash \neg C$, by Theorem \ref{ffp_GR}, we obtain a $\bb$-serial $\GRC$-model $(W^C, \acc^C, \{\prec_B^C\}_{B \in \MFT}, \Vdash^C)$ and an element $w^C \in W^C$ such that $w^C \Vdash^C C$. 
We define the model $(W^*, \acc^*, \{\prec_B^*\}_{B \in \MFT}, \Vdash^*)$ as follows: 
\begin{itemize}
    \item $W^* = \{r\} \cup \bigcup \{W^C \mid \GR \nvdash \neg C\}$, 
    \item $\acc^* = \bigcup \{ \acc^C \mid \GR \nvdash \neg C\}$, 
    \item for each $B \in \MFT$, 
    \[
        \prec_B^* = \begin{cases} \{(r, w^B)\} \cup \bigcup \{ \prec_B^C \mid \GR \nvdash \neg C\}
         & \text{if}\ r \Vdash \bb B\ \&\ \GR \nvdash \neg B, \\
        \{(r, r)\} \cup \bigcup \{ \prec_B^C \mid \GR \nvdash \neg C\} & \text{otherwise},
          \end{cases}
    \]
    \item for $x \in W^C$, $x \Vdash^* p \iff x \Vdash^C p$; and $r \Vdash^* p \iff r \Vdash p$. 
\end{itemize}
It is shown that the model $(W^*, \acc^*, \{\prec_B^*\}_{B \in \MFT}, \Vdash^*)$ is a $\bb$-serial $\GRC$-model. 
Also, it is shown that $w^C \Vdash^* C$ for each $C \in \MFT$ with $\GR \nvdash \neg C$. 

\begin{cl*}
    For any $B \in S(A)$, we have $r \Vdash^* B$ if and only if $r \Vdash B$. 
\end{cl*}
\begin{proof}
    We prove the claim by induction on the construction of $B \in S(A)$. 
    We give only the proof of the case that $B$ is of the form $\bb C$. 

    $(\Rightarrow)$: 
    Suppose $r \nVdash \bb C$. 
    Then, $r \prec_C r$ and $r \nVdash C$. 
    By the induction hypothesis, $r \nVdash^* C$. 
    Since $r \prec_C^* r$, we obtain $r \nVdash^* \bb C$. 

    $(\Leftarrow)$: Suppose $r \Vdash \bb C$. 
    Then, $\neg \bb C$ is not a conjunct of $\Theta_A$, and so $\GR \nvdash \neg C$. 
    Let $x$ be any element of $W'$ such that $r \prec_C^* x$, then $x = w^C$. 
    Since $w^C \Vdash^* C$, we obtain $r \Vdash^* \bb C$. 
    \end{proof}
Therefore, $r \nVdash^* A$. 
By Theorem \ref{ffp_GR}, we have $\GR \nvdash A$, a contradiction. 
\end{proof}

\begin{thm}[UIP for $\GR$]
$\GR$ enjoys uniform interpolation property.
\end{thm}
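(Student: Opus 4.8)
The plan is to mimic the proof of UIP for $\GRC$ exactly, replacing the embedding $\GRM \vdash A^\top \Leftrightarrow \GRC \vdash A$ by the embedding $\GRC \vdash \Theta_A \to A \Leftrightarrow \GR \vdash A$ established in Proposition \ref{GRC_GR}. Fix an $\LT$-formula $A$ and a finite set $P$ of propositional variables. First I would apply UIP for $\GRC$ to the pair $(\Theta_A \to A, P)$ to obtain a uniform interpolant $C$ in $\GRC$; note that $v(C) \subseteq v(\Theta_A \to A) \setminus P = (v(A) \cup v(\Theta_A)) \setminus P$. The slight subtlety here is that $\Theta_A$ may introduce new variables not in $v(A)$, so one should first massage $\Theta_A$ so that $v(\Theta_A) \subseteq v(A)$; this is immediate since every conjunct of $\Theta_A$ is of the form $\neg \bb C$ with $\bb C \in S(A)$, so in fact $v(\Theta_A) \subseteq v(A)$ already and $v(C) \subseteq v(A) \setminus P$.

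Next I would verify the three defining conditions for $C$ to be a uniform interpolant of $(A, P)$ in $\GR$. Condition (1), $v(C) \subseteq v(A) \setminus P$, is the observation just made. For condition (2), from $\GRC \vdash (\Theta_A \to A) \to C$ and $\GR \vdash \Theta_A$ (since each conjunct $\neg \bb C$ has $\GR \vdash \neg C$ and the Rosser rule is admissible in $\GR$, giving $\GR \vdash \neg \bb C$), together with the fact that $\GR$ extends $\GRC$, we get $\GR \vdash \Theta_A \to C$ and hence $\GR \vdash A \to C$ (in fact $\GR \vdash C$ is not claimed, only $A \to C$, and $\GR \vdash \Theta_A \to A \to C$ already combined with $\GR \vdash \Theta_A$ gives $\GR \vdash A \to C$ — wait, more carefully: $\GRC \vdash (\Theta_A \to A) \to C$ need not give $\GR \vdash A \to C$ directly; rather $\GR \vdash \Theta_A$ and $\GR \vdash (\Theta_A \to A) \to C$ yield $\GR \vdash A \to C$ by propositional reasoning). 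For condition (3), let $B$ be any $\LT$-formula with $\GR \vdash A \to B$ and $v(B) \cap P = \emptyset$. Then $\GR \vdash \Theta_A \to (A \to B)$, and since $S(A \to B) \supseteq S(A)$ one has $\GR \vdash \Theta_{A \to B} \to (A \to B)$, whence $\GR \vdash \Theta_{A} \wedge \Theta_{B} \to (A \to B)$ after checking that the extra conjuncts of $\Theta_{A \to B}$ beyond $\Theta_A$ are exactly (a subset of) the conjuncts of $\Theta_B$. By Proposition \ref{GRC_GR} applied to the formula $\Theta_B \to (A \to B)$ — or more directly, applied to $A \to (\Theta_B \to B)$, noting $S(A \to (\Theta_B \to B))$ contains the right $\bb$-subformulas — we obtain $\GRC \vdash \Theta_A \to (\Theta_B \to (A \to B))$, i.e. $\GRC \vdash (\Theta_A \to A) \to (\Theta_B \to B)$. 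Since $v(\Theta_B \to B) \subseteq v(B)$ is disjoint from $P$, condition (3) for $C$ in $\GRC$ gives $\GRC \vdash C \to (\Theta_B \to B)$, hence $\GR \vdash C \to (\Theta_B \to B)$, and finally $\GR \vdash \Theta_B$ (Rosser rule again) yields $\GR \vdash C \to B$.

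The main obstacle will be the bookkeeping in condition (3): one must be careful about which $\bb$-subformulas appear in $S(A)$, $S(B)$, and $S(A \to B)$, and how $\Theta_{A \to B}$, $\Theta_A$, $\Theta_B$ relate, so that the application of Proposition \ref{GRC_GR} is legitimate — specifically, one needs a formula $E$ with $\GR \vdash E$ and $\GRC \vdash \Theta_E \to E$ where $E$ is chosen so that $\Theta_E$ captures both the $\Theta_A$ and the $\Theta_B$ conjuncts; taking $E$ to be $\Theta_B \to (A \to B)$ and checking $S(\Theta_B \to (A \to B)) \supseteq \{\bb C \in S(A) : \GR \vdash \neg C\} \cup \{\bb C \in S(B): \GR \vdash \neg C\}$ does the job, but the argument that $\Theta_B$ contributes its own conjuncts to $\Theta_E$ "for free" needs the observation that $\GR \vdash \neg(\Theta_B \to C)$ fails in general, so one instead uses that $\Theta_B$ occurs as a hypothesis and argues propositionally. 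I would present this carefully but briefly, as it is essentially the same manipulation already carried out in the proof of UIP for $\GRC$ with $A^\top$ replaced by $\Theta_A \to A$ and Proposition \ref{GRM_GRC} replaced by Proposition \ref{GRC_GR}.
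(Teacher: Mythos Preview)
Your overall strategy is right, but there is a genuine gap in the verification of condition~(3). You apply UIP for $\GRC$ to the pair $(\Theta_A \to A,\,P)$, whereas the paper applies it to $(\Theta_A \land A,\,P)$, and this difference is not cosmetic. With your choice, to invoke the defining property of $C$ you must establish
\[
\GRC \vdash (\Theta_A \to A)\to(\Theta_B\to B)
\]
for each $B$ with $\GR \vdash A\to B$ and $v(B)\cap P=\emptyset$. You correctly derive $\GRC \vdash \Theta_A \to \bigl(\Theta_B \to (A\to B)\bigr)$ from Proposition~\ref{GRC_GR}, and then write ``i.e.\ $\GRC \vdash (\Theta_A \to A) \to (\Theta_B \to B)$''. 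This ``i.e.'' is a propositional error: the formula $\Theta_A \to \bigl(\Theta_B \to (A\to B)\bigr)$ is equivalent to $(\Theta_A \land A) \to (\Theta_B \to B)$, which is strictly weaker than $(\Theta_A \to A) \to (\Theta_B \to B)$. For a concrete failure, take $A = \bb\bot$ and $B=\bot$. Then $\GR \vdash A \to B$ (since $\GR\vdash\neg\bb\bot$), $\Theta_A = \neg\bb\bot$, $\Theta_B$ is the empty conjunction, and the implication you need becomes $\GRC \vdash (\neg\bb\bot \to \bb\bot) \to \bot$, i.e.\ $\GRC \vdash \neg\bb\bot$; but the paper shows $\GRC \nvdash \neg\bb B$ for every $B$. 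So the $\GRC$-derivation you require simply does not exist, and the UIP clause for $C$ cannot be applied.

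The fix is exactly what the paper does: take $C$ to be the $\GRC$-uniform interpolant of $(\Theta_A \land A,\,P)$. Condition~(2) still goes through, since $\GRC\vdash \Theta_A\land A\to C$ together with $\GR\vdash\Theta_A$ yields $\GR\vdash A\to C$. For condition~(3), Proposition~\ref{GRC_GR} gives $\GRC\vdash \Theta_{A\to B}\to(A\to B)$, which rewrites as $\GRC\vdash \Theta_A\land A\to(\Theta_B\to B)$; this is now of exactly the right shape to invoke the UIP property of $C$ (noting $v(\Theta_B\to B)\subseteq v(B)$ is disjoint from $P$) and conclude $\GRC\vdash C\to(\Theta_B\to B)$, whence $\GR\vdash C\to B$ using $\GR\vdash\Theta_B$. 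As a side remark, your hesitation about condition~(2) is unnecessary: $A\to(\Theta_A\to A)$ is already a tautology, so $\GRC\vdash(\Theta_A\to A)\to C$ gives $\GR\vdash A\to C$ without appealing to $\GR\vdash\Theta_A$.
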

\begin{proof}
    We fix an $\LT$-formula $A$ and a finite set $P$ of propositional variables. 
    By applying UIP for $\GRC$, we find a uniform interpolant $C$ of $(\Theta_A \land A, P)$ in $\GRC$. 
    That is, 
    \begin{enumerate}
\item $v(C) \subseteq v\bigl(\Theta_A \land A \bigr) \setminus P$, 
\item $\GRC \vdash \Theta_A \land A \to C$, 
\item for any $B \in \MFT$, if $\GRC \vdash \Theta_A \land A \to B$ and $v(B) \cap P = \emptyset$, then $\GRC \vdash C \to B$.
\end{enumerate}
We prove that $C$ is a uniform interpolant of $(A, P)$ in $\GR$. 
Since $v \bigl(\Theta(A) \bigr) \subseteq v(A)$, we have $v(C) \subseteq v(A) \setminus P$. 
Since $\GR \vdash \Theta_A$, we get $\GR \vdash A \to C$. 
Let $B$ be any $\LT$-formula such that $\GR \vdash A \to B$ and $v(B) \cap P = \emptyset$. 
By Proposition \ref{GRC_GR}, $\GRC \vdash \Theta_{A \to B} \to (A \to B)$. 
Then, we have $\GRC \vdash \Theta_A \land A \to \bigl(\Theta_B \to B \bigr)$. 
Since $v \bigl(\Theta_B \to B \bigr) \setminus P = v(B) \setminus P = \emptyset$, by the choice of $C$, we obtain $\GRC \vdash C \to \bigl(\Theta_B \to B \bigr)$. 
Then, we conclude $\GR \vdash C \to B$ because $\GR \vdash \Theta_B$. 
\end{proof}






\section{Future work}\label{sec_fw}

In this paper, we introduced the logic $\GRC$ and its relational semantics. 
By using our semantics, we proved that the logics $\GRC$ and $\GR$ enjoy Lyndon interpolation property. 
Also, from our proofs of these results, we obtained that $\GRC$ and $\GR$ are complete and have the finite frame property with respect to our semantics. 
We also proved that the logics $\GRM$, $\GRC$ and $\GR$ enjoy uniform interpolation property. 
In particular, using Fact \ref{dagger}, we proved that UIP is inherited from $\GL$ to $\GRM$. 
Although $\GL$ has LIP (cf.~Corollary \ref{LIP_GL}), in this case, Fact \ref{dagger} cannot be used to prove LIP for $\GRM$.
This is because in the case of $p \in v^+(A) \setminus v^-(A)$, the variable $p$ may occur both positively and negatively in $\Psi_D$ for $\bb D \in S_0(A)$. 
We propose the following problem. 

\begin{prob}
        Does the logic $\GRM$ enjoy Lyndon interpolation property? 
\end{prob}

In \cite{Kurahashi20}, the second author introduced the notion of uniform Lyndon interpolation property. 

\begin{defn}
We say that a logic $L$ enjoys \textit{uniform Lyndon interpolation property} iff for any formula $A$ and any finite sets $P^+$ and $P^-$ of propositional variables, there exists a formula $C$ satisfying the following conditions:
\begin{enumerate}
    \item $v^*(C) \subseteq v^*(A) \setminus P^*$ for $* \in \{+, -\}$, 
    \item $L \vdash A \to C$, 
    \item for any formula $B$, if $L \vdash A \to B$ and $v^*(B) \cap P^* = \emptyset$ for $* \in \{+, -\}$, then $L \vdash C \to B$. 
\end{enumerate}
\end{defn}

The following problem is a challenging one.

\begin{prob}
        Do the logics $\GRM$, $\GRC$ and $\GR$ enjoy uniform Lyndon interpolation property? 
\end{prob}

As mentioned in the introduction, the second author proved in \cite{Kurahashi} that $\GR$ is a conservative extension of $\NR$. 
Also, we proved in Theorem \ref{conservation} that $\GRC$ is a conservative extension of $\N$. 
However, our theorems established in this paper do not seem to give any interpolation result for $\N$ and $\NR$. 
We suggest the following problem. 

\begin{prob}
    Study interpolation properties for extensions of $\N$. 
\end{prob}

Relating to this problem, the finite frame property of logics obtained from $\N$ by adding the axiom $\bb^n A \to \bb^m A$ is investigated in \cite{KS}.  

\section*{Acknowledgements}

The second author was supported by JSPS KAKENHI Grant Number JP23K03200.

\bibliographystyle{plain}
\bibliography{ref}

\end{document}